\numberwithin{equation}{section}
\newtheorem{theo}{Theorem}[section]
\newtheorem{prop}[theo]{Proposition}
\newtheorem{coro}[theo]{Corollary}
\newtheorem{conj}[theo]{Conjecture}
\newtheorem{lemm}[theo]{Lemma}
\newtheorem{scol}[theo]{Scholium}
\theoremstyle{remark}
\newtheorem{rema}[theo]{Remark}
\newcommand{\ab}{\mathrm{ab}}
\newcommand{\an}{\mathrm{an}}
\newcommand{\aut}{\mathrm{aut}}
\newcommand{\BdR}{\mathbf{B}_{\mathrm{dR}}}
\newcommand{\C}{\mathbf{C}}
\DeclareMathOperator{\codim}{\mathrm{codim}}
\newcommand{\cris}{\mathrm{cris}}
\newcommand{\cyc}{\mathrm{cyc}}
\newcommand{\dR}{\mathrm{dR}}
\DeclareMathOperator{\End}{\mathrm{End}}
\renewcommand{\epsilon}{\varepsilon}
\DeclareMathOperator{\Fil}{\mathrm{Fil}}
\newcommand{\Fp}{\mathbf{F}_p}
\DeclareMathOperator{\Gal}{\mathrm{Gal}}
\renewcommand{\geq}{\geqslant}
\DeclareMathOperator{\GL}{\mathrm{GL}}
\DeclareMathOperator{\gr}{\mathrm{gr}}
\newcommand{\gtilde}{\widetilde{\mathfrak{g}}}
\DeclareMathOperator{\Hom}{\mathrm{Hom}}
\newcommand{\Id}{\mathrm{Id}}
\DeclareMathOperator{\Ind}{\mathrm{Ind}}
\renewcommand{\leq}{\leqslant}
\DeclareMathOperator{\Lie}{\mathrm{Lie}}
\newcommand{\pdR}{\mathrm{pdR}}
\newcommand{\Q}{\mathbf{Q}}
\newcommand{\Qp}{\mathbf{Q}_p}
\newcommand{\R}{\mathbf{R}}
\DeclareMathOperator{\rec}{\mathrm{rec}}
\DeclareMathOperator{\Res}{\mathrm{Res}}
\newcommand{\rig}{\mathrm{rig}}
\DeclareMathOperator{\rk}{\mathrm{rk}}
\newcommand{\sm}{\mathrm{sm}}
\DeclareMathOperator{\Sp}{\mathrm{Sp}}
\DeclareMathOperator{\Spec}{\mathrm{Spec}}
\DeclareMathOperator{\Spf}{\mathrm{Spf}}
\newcommand{\st}{\mathrm{st}}
\DeclareMathOperator{\Tri}{\mathrm{Tri}}
\newcommand{\tri}{\mathrm{tri}}
\newcommand{\vtilde}{\tilde{v}}
\newcommand{\Z}{\mathbf{Z}}
\newcommand{\Zp}{\mathbf{Z}_p}
\newcommand*{\isoarrow}[1]{\arrow[#1,"\rotatebox{90}{\(\sim\)}"]}
\newcommand{\dbl}{{\mathchoice{\mbox{\rm [\hspace{-0.15em}[}}
                              {\mbox{\rm [\hspace{-0.15em}[}}
                              {\mbox{\scriptsize\rm [\hspace{-0.15em}[}}
                              {\mbox{\tiny\rm [\hspace{-0.15em}[}}}}
\newcommand{\dbr}{{\mathchoice{\mbox{\rm ]\hspace{-0.15em}]}}
                              {\mbox{\rm ]\hspace{-0.15em}]}}
                              {\mbox{\scriptsize\rm ]\hspace{-0.15em}]}}
                              {\mbox{\tiny\rm ]\hspace{-0.15em}]}}}}
\author[E.~Hellmann, C.M.~ Margerin, B.~Schraen]{Eugen Hellmann, Christophe M. Margerin, Benjamin Schraen}
\address{Eugen Hellmann\\
Mathematisches Institut\\
Universit\"at M\"unster\\
Einsteinstrasse 62\\
D-48149 M\"unster\\
Germany\\
e.hellmann@uni-muenster.de}
\address{Christophe M. Margerin, CMLS, \'Ecole Polytechnique, F-91128 Palaiseau, France}
\address{Benjamin Schraen, Laboratoire de Mathématiques d’Orsay, Univ. Paris-Sud, CNRS, Université Paris-Saclay, 91405 Orsay, France\\
benjamin.schraen@math.u-psud.fr}
\title[Density of automorphic points]{Density of automorphic points in deformation rings of polarized global Galois representations}
\begin{document}

\begin{abstract}
Conjecturally, the Galois representations that are attached to essentially selfdual regular algebraic cuspidal automorphic representations are Zariski-dense in a polarized Galois deformation ring. 
We prove new results in this direction in the context of automorphic forms on definite unitary groups over totally real fields. This generalizes the infinite fern argument of Gouvea-Mazur and Chenevier, and relies on the construction of non-classical $p$-adic automorphic forms, and the computation of the tangent space of the space of trianguline Galois representations. This boils down to a surprising statement about the linear envelope of intersections of Borel subalgebras.
\end{abstract}

\maketitle

\tableofcontents

\section{Introduction}

Let $F$ be a number field, and fix a positive integer $n\geq1$ and a prime number $p$. The goal of this paper is to study some properties of deformation spaces of continuous representations
\[ \overline{\rho}:\,\Gal(\overline{F}/F)\longrightarrow\GL_n(\mathbf{F})\]
where $\mathbf{F}$ is a finite extension of $\Fp$. Assume that $\overline{\rho}$ is absolutely irreducible and unramified outside a finite set of places $S$ containing places dividing $p$. Mazur proved in \cite{Mazurdeform} that there exists a universal deformation of $\overline{\rho}$ unramified outside of $S$, that is, for $F_S\subset\overline{F}$ the maximal algebraic extension of $F$ unramified outside of $S$, a complete local noetherian ring $R_{\overline{\rho},S}$ and a continuous representation
\[ \rho_S^{\mathrm{univ}}:\,\Gal(F_S/F)\longrightarrow\GL_n(R_{\overline{\rho},S})\]
pro-representing the functor of deformations of $\overline{\rho}$ unramified outside $S$. The generic fiber $\mathcal{X}_{\overline{\rho},S}$ of the formal scheme $\Spf R_{\overline{\rho},S}$ is a rigid analytic space over $W(\mathbf{F})[\tfrac{1}{p}]$ whose closed points can be canonically identified with liftings of $\overline{\rho}$ to finite extensions of the $p$-adic field $W(\mathbf{F})[\tfrac{1}{p}]$.

When $F$ is a totally real field or a CM field, it is known that we can attached to each regular algebraic cuspidal automorphic representation $\pi$ of $\GL_n(\mathbb{A}_F)$ an $n$-dimensional $p$-adic continuous representation
\[ \rho_{\pi}:\,\Gal(\overline{F}/F)\longrightarrow\GL_n(\overline{\Qp}).\]
This representation is characterized by some local compatibility with $\pi$ at almost all finite places of $F$. As a consequence it is unramified outside a finite number of places. A very natural problem with regard to the rigid analytic spaces $\mathcal{X}_{\overline{\rho},S}$ concerns the distribution of automorphic points in $\mathcal{X}_{\overline{\rho},S}$, that is points corresponding to regular algebraic cuspidal automorphic representations $\pi$ of $\GL_n(\mathbb{A}_F)$ such that $\overline{\rho}_\pi$ reduces to $\overline{\rho}$ modulo $p$.

Beyond the case $n=1$ which is a consequence of class field theory, the case $n=2$, $F=\Q$ and $\overline{\rho}$ attached to a modular form has been completely solved by works of Gouvea-Mazur (\cite{GouveaMazur}) and Böckle (\cite{Bockledense}). It follows from their results that the space $\mathcal{X}_{\overline{\rho},S}$ is equidimensional of dimension $3$ and that the automorphic points are Zariski-dense inside $\mathcal{X}_{\overline{\rho},S}$.

For general values of $n$, the case of polarized representations has been studied by Chenevier in the paper \cite{Chefougere}. Let $\epsilon:\,\Gal(\overline{F}/F)\rightarrow\Zp^\times$ be the cyclotomic character. Assume moreover that $F$ is a totally imaginary quadratic extension of a totally real number field $F^+$ and let $c$ be the non trivial element of $\Gal(F/F^+)$. We recall that an $n$-dimensional $p$-adic representation $\rho:\,\Gal(\overline{F}/F)\rightarrow\GL_n(\overline{\Qp})$ is \emph{polarized} if there exists an isomorphism
\[ \rho^\vee\circ c\simeq \rho\otimes\epsilon^{n-1}.\]
When the regular algebraic cuspidal automorphic representation $\pi$ of $\GL_n(\mathbb{A}_F)$ is \emph{conjugate self dual}, that is $\pi^{\vee,c}\simeq \pi$, the representation $\rho_\pi$ is polarized. Moreover the representation $\rho_\pi$ is crystalline at $p$ if and only if the representations $\pi_v$ are unramified for $v\mid p$. In this situation, we have the following conjecture of Chenevier (\cite[Conj.~1.15]{Chefougere}) :

\begin{conj}\label{conjdensity}
Assume that $\overline{\rho}$ is absolutely irreducible and that there exists a regular conjugate self dual algebraic cuspidal automorphic representation $\pi$ of $\GL_n(\mathbb{A}_E)$ such that $\overline{\rho_\pi}\simeq\overline{\rho}$, then the set of points of the form $\rho_{\pi'}$ for $\pi'$ a regular conjugate self dual algebraic cuspidal automorphic representation unramified at $p$ is Zariski dense in $\mathcal{X}_{\overline{\rho},S}$.
\end{conj}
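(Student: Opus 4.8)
\medskip
\noindent\emph{Strategy.} The plan is to run an ``infinite fern'' argument in the spirit of Gouvea--Mazur and Chenevier, reducing the density statement to a tangent space computation, and then to carry out that computation by means of a statement in Lie theory. To begin, I would transfer the problem to a totally definite unitary group $G/F^+$ attached to $F/F^+$, quasi-split at all finite places, with an open compact subgroup of tame level prime to $S$. Classical base change matches the regular conjugate self dual cuspidal automorphic representations $\pi'$ of $\GL_n(\mathbb{A}_F)$ that are unramified at $p$ with a family of classical points on the eigenvariety $\mathcal{E}$ of $G$ of that tame level, and the Galois representations $\rho_{\pi'}$ assemble into a morphism $\mathcal{E}\to\mathcal{X}_{\overline{\rho},S}$, which one checks factors through the (polarized) trianguline variety of $\overline{\rho}$. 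Since classical points of regular, sufficiently generic weight are Zariski dense and accumulate in $\mathcal{E}$, and near such a point $\mathcal{E}$ is étale over the weight space, of dimension $n[F^+:\Q]$, it suffices to show that the Zariski closure $Z\subseteq\mathcal{X}_{\overline{\rho},S}$ of the image of $\mathcal{E}$ is everything.

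\noindent\emph{The infinite fern.} Next, fix a well-chosen classical $x\in\mathcal{E}$ with $\rho_x$ crystalline at $p$, having pairwise distinct, sufficiently generic crystalline Frobenius eigenvalues and regular Hodge--Tate weights at every $v\mid p$. Each choice, for every $v\mid p$, of an ordering of the crystalline Frobenius eigenvalues is a refinement of $\rho_x$, giving a germ $\mathcal{F}_w\subseteq\mathcal{E}$ through $x$ that is étale over weight space; their union is the infinite fern $\mathcal{F}_x\subseteq Z$. Since the Galois representation is constant along each $\mathcal{F}_w$ and since ferns attached to distinct refinements of the same representation glue at companion points, an orbit argument lets $\mathcal{F}_x$ propagate across $\mathcal{X}_{\overline{\rho},S}$. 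Granting that $R_{\overline{\rho},S}$ is equidimensional of the expected dimension $\tfrac{n(n+1)}{2}[F^+:\Q]$ and that each of its irreducible components meets the trianguline locus (hence carries automorphic points), everything then reduces to proving the single tangent space estimate
\[ \dim T_xZ \;\geq\; \tfrac{n(n+1)}{2}[F^+:\Q] \]
at one good point $x$: for then $Z$ contains a whole component, and, letting $x$ vary, all of them.

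\noindent\emph{Reduction to Borel subalgebras.} The tangent space $T_xZ$ contains the linear span $\sum_w T_x\mathcal{F}_w$. Computing $T_x\mathcal{F}_w$ by the deformation theory of trianguline $(\varphi,\Gamma)$-modules and passing, at the crystalline point $x$, to the graded pieces of the de Rham filtration, one identifies $T_x\mathcal{F}_w$ — modulo the weight directions common to all the ferns — with the intersection of two Borel subalgebras of $\mathfrak{g}=\bigoplus_{\tau\colon F^+\hookrightarrow\overline{\Qp}}\mathfrak{gl}_n$: one fixed, coming from the Hodge filtration of $\rho_x$, and one conjugate to it by the Weyl element recording the refinement $w$. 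The estimate above thus reduces to the assertion that the linear envelope of these intersections of Borel subalgebras, as $w$ runs over all refinements, is the whole of $\mathfrak{g}$. For $n=2$ this is the transversality of the two ferns already exploited by Gouvea--Mazur and Chenevier; for general $n$ it is the ``surprising statement'' of the abstract, which I would prove directly by a representation-theoretic computation in $\mathfrak{gl}_n$, under the genericity hypothesis guaranteeing that the Hodge Borel is in good position relative to the Frobenius torus.

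\noindent\emph{Main obstacle.} I expect the decisive difficulty to be twofold. The genuinely new ingredient is the Lie-theoretic statement about linear envelopes of intersections of Borel subalgebras, including the identification of the correct hypotheses on relative position; this is the heart of the argument. Almost as delicate is the requirement to produce \emph{enough non-classical $p$-adic automorphic forms}, i.e.\ to know that every chosen refinement of $x$ is non-critical, so that each $\mathcal{F}_w$ really is smooth of dimension $n[F^+:\Q]$ and its tangent space attains the size the Borel-subalgebra computation predicts; if some refinement were critical, the corresponding $T_x\mathcal{F}_w$ would collapse and the envelope argument would break down, and it is precisely the control of this phenomenon that is likely to force additional hypotheses. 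The remaining inputs — density and accumulation of classical points, equidimensionality of $R_{\overline{\rho},S}$, gluing of ferns at companion points, and the local-to-global comparison of the deformation problems — are by now well understood.
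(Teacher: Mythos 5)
The statement you are trying to prove is Chenevier's conjecture, which this paper does \emph{not} prove; it proves the weaker Theorem \ref{maintheo}, namely that the Zariski closure $\mathcal{X}_{\overline{\rho},S}^{\aut}$ of the automorphic points is a \emph{union of irreducible components}, under extra hypotheses ($p>2$, $F/F^+$ unramified, $S$ split, adequacy of $\overline{\rho}(\Gal(\overline{F}/F(\zeta_p)))$, and the existence of at least one automorphic lift). Your proposal contains exactly the missing piece as an unjustified assumption: you write ``granting that $R_{\overline{\rho},S}$ is equidimensional of the expected dimension and that each of its irreducible components meets the trianguline locus (hence carries automorphic points).'' Neither assertion is known unconditionally; the existence of an automorphic point on \emph{every} irreducible component is precisely Allen's result, which is conditional on standard automorphy lifting conjectures (and on hypotheses such as potential diagonalizability), and it is only by combining that conditional input with Theorem \ref{maintheo} that one gets cases of the conjecture. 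Your infinite-fern/tangent-space argument, even if carried out perfectly, only shows that the component through your chosen good point $x$ lies in $Z$; components containing no automorphic point are untouched, so the argument cannot close the conjecture.

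There is a second, more technical gap, and here your proposal points in the opposite direction from the paper. You require that every refinement of the chosen crystalline point $x$ be non-critical (``if some refinement were critical, the corresponding $T_x\mathcal{F}_w$ would collapse and the envelope argument would break down''), and you invoke ``the genericity hypothesis guaranteeing that the Hodge Borel is in good position relative to the Frobenius torus.'' This is exactly the hypothesis of Chenevier's earlier work that one cannot verify: there is in general no way to produce a point of a given component of $\mathcal{X}_{\overline{\rho},S}$ whose Hodge filtration is in general position with respect to all Frobenius-stable flags. The whole point of the present paper is to remove that assumption: instead of the triangulation functors $\mathfrak{X}_{\mathcal{D},\mathcal{F}}$ one works with the non-saturated (quasi-trianguline) functors $\mathfrak{X}_{\mathcal{D},\mathcal{F}[\frac{1}{t}]}$, whose components are indexed by Weyl elements $w\geq w_{\mathcal{F}}$ and which are accessed on the eigenvariety through the companion points $x_{\underline{\mathcal{F}},w}$; the local model of \cite{BHS3} then reduces the surjectivity of $\bigoplus_{\mathcal{F}}T\mathfrak{X}_{\mathcal{D},\mathcal{F}[\frac{1}{t}]}\rightarrow T\mathfrak{X}_{\mathcal{D}}$ (Theorem \ref{mainlocalthm}) to the identity $\mathfrak{b}=\sum_{w}\mathfrak{b}\cap\mathfrak{b}_w$ for an \emph{arbitrary} Borel $\mathfrak{b}$, with no position hypothesis (Theorem \ref{enveloppeBorels}). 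So the Lie-theoretic statement you correctly identify as the heart of the matter must be proved without your genericity hypothesis, and the eigenvariety-side input is not non-criticality but the existence of all companion points together with flatness of the weight map at them (Proposition \ref{propertiesofeigenvar} (iii)--(iv)), plus the patching comparison identifying the global completed local rings with the local models (Proposition \ref{propcomparecomparelocalrings}, Lemmas \ref{lemmagenericpoints}--\ref{lemmaqequations}). As written, your route either assumes what cannot be arranged (non-criticality at a point of each component) or assumes what is not known (automorphic points on every component), and in the best case recovers only the already-known generic case.
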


When $n=3$, $F_v=\Qp$ for $v\mid p$ and the deformation functor of $\overline{\rho}$ is \emph{unobstructed}, this conjecture has been proven by Chenevier.

The main result of this paper is the following.

\begin{theo}\label{maintheo}
Assume $p>2$ and the following assumptions
\begin{itemize}
\item the extension $F/F^+$ is unramified;
\item $2 \mid [F^+:\Q]$ if $n\equiv 2$ $(mod.\ 4)$;
\item $S$ contains only places which are split in $F$ ;
\item the representation $\overline{\rho}$ is absolutely irreducible and the group $\overline{\rho}(\Gal(\overline{F}/F(\zeta_p))$ is {\em adequate} in the sense of \cite{ThorneAut}.
\end{itemize}
Assume moreover that there exists some regular conjugate self dual cuspidal automorphic representation $\pi$ which is unramified outside of $S\setminus S_p$ and such that $\overline{\rho_\pi}\simeq\overline{\rho}$. Then the Zariski closure of automorphic points in $\mathcal{X}_{\overline{\rho},S}$ is a union of irreducible components.
\end{theo}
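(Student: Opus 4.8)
The plan is to reduce the statement to the tangent-space equality $T_xY=T_x\mathcal{X}$ at generic automorphic points, where $Y\subseteq\mathcal{X}:=\mathcal{X}_{\overline{\rho},S}$ denotes the Zariski closure of the automorphic locus and ``generic'' means that $\rho_x$ is crystalline at all $v\mid p$, with regular Hodge--Tate weights and pairwise distinct crystalline Frobenius eigenvalues in sufficiently general position (e.g.\ with no ratio equal to $p^{\pm 1}$). This reduction is pure algebraic geometry: such points are Zariski-dense in every irreducible component of $Y$ — because $Y$ is the Zariski closure of the image of an eigenvariety in which generic classical points are dense on every component — so for a component $Y_0$ one may pick a generic automorphic $x\in Y_0$ that is a smooth point of $Y_0$ and lies on no other component of $Y$. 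Then $T_xY=T_xY_0$ has dimension $\dim Y_0$, and if one has proved $T_xY=T_x\mathcal{X}$ the chain $\dim Y_0\leq\dim_x\mathcal{X}\leq\dim T_x\mathcal{X}=\dim T_xY_0=\dim Y_0$ collapses, forcing $\mathcal{X}$ to be smooth at $x$ of dimension $\dim Y_0$; hence $Y_0$, an irreducible closed subscheme of $\mathcal{X}$ through the smooth point $x$ of the full local dimension, is the unique component of $\mathcal{X}$ through $x$. Letting $Y_0$ run over the components of $Y$, this shows $Y$ is a union of irreducible components of $\mathcal{X}$.

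For the infinite fern, the hypotheses ($F/F^+$ unramified, $S$ split in $F$, the parity condition, $p>2$, $\overline{\rho}$ adequate) let one realize $\overline{\rho}$ — starting from the given automorphic $\pi$ — through automorphic forms on a definite unitary group, and build the associated eigenvariety $\mathcal{E}$, which carries a conjugate self-dual Galois pseudo-representation and hence, since $\overline{\rho}$ is absolutely irreducible, a morphism $\xi\colon\mathcal{E}\to\mathcal{X}$; classical points are Zariski-dense in $\mathcal{E}$, every automorphic point lies in $\xi(\mathcal{E})$ after $p$-stabilization, and $Y=\overline{\xi(\mathcal{E})}$. Now fix a generic automorphic $x$. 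Its crystalline Frobenius at each $v\mid p$ has $n$ distinct eigenvalues, hence $n!$ refinements; let $\underline{\mathcal{F}}$ range over the $\prod_{v\mid p}n!$ global refinements. Each $\underline{\mathcal{F}}$ should cut out a point $x_{\underline{\mathcal{F}}}\in\mathcal{E}$ above $x$; for the non-ordinary refinements this requires the existence of \emph{non-classical $p$-adic automorphic forms} (companion points), which — together with the smoothness of $\mathcal{E}$ at $x_{\underline{\mathcal{F}}}$ and the fact that $\xi$ is a locally closed immersion near $x_{\underline{\mathcal{F}}}$ — is the first nontrivial input. Each $\underline{\mathcal{F}}$ then produces a smooth ``leaf'' $L_{\underline{\mathcal{F}}}\subseteq\xi(\mathcal{E})\subseteq Y$ through $x$, of dimension $\dim\mathcal{E}$, so that $T_xY\supseteq\sum_{\underline{\mathcal{F}}}T_xL_{\underline{\mathcal{F}}}$.

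Next, using the modular interpretation of the space of trianguline representations, I would identify $T_xL_{\underline{\mathcal{F}}}$, inside $T_x\mathcal{X}$, with the preimage under the localization map $T_x\mathcal{X}\to\bigoplus_{v\mid p}H^1(G_{F_v},\mathrm{ad}\,\rho_x)$ of $\bigoplus_{v\mid p}H^1_{\tri,\mathcal{F}_v}(G_{F_v},\mathrm{ad}\,\rho_x)$, the subspace of trianguline deformations at $v$ with the parameter free to move; here the localization map is surjective because automorphy of $\rho_x$ and the adequacy hypothesis make the relevant dual Selmer group vanish (Taylor--Wiles--Kisin). The crux is then the purely local assertion that, for each $v\mid p$,
\[ \sum_{\mathcal{F}_v}H^1_{\tri,\mathcal{F}_v}(G_{F_v},\mathrm{ad}\,\rho_x)=H^1(G_{F_v},\mathrm{ad}\,\rho_x),\]
the sum being over all refinements $\mathcal{F}_v$ of $\rho_x$ at $v$: a priori these trianguline subspaces could all lie in one proper subspace, and the point is the paper's ``surprising'' statement that a suitable linear envelope of intersections of Borel subalgebras of $\mathfrak{gl}_n$ (those containing the split torus spanned by the crystalline Frobenius) exhausts $\mathfrak{gl}_n$. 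Granting this, $\sum_{\underline{\mathcal{F}}}T_xL_{\underline{\mathcal{F}}}$ contains the kernel of the localization map (each summand does, being a preimage of a subspace) and, one refinement at a time, surjects onto $\bigoplus_{v\mid p}H^1(G_{F_v},\mathrm{ad}\,\rho_x)$; hence it equals $T_x\mathcal{X}$, so $T_xY=T_x\mathcal{X}$, and the reduction above finishes the proof.

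The main obstacle, as the abstract signals, is the tangent-space computation in the last step — above all the ``surprising'' linear-algebra statement about the span of intersections of Borel subalgebras that forces the tangent directions of the various refinements to fill out the whole local tangent space; a close second is arranging, through the construction of non-classical $p$-adic automorphic forms, that every one of the $\prod_{v\mid p}n!$ refinements is realized by a point of $\mathcal{E}$ at which $\mathcal{E}$ is smooth and $\xi$ immersive, so that the fern is genuinely ``infinite''.
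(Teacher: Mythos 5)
Your global skeleton (reduce to the tangent--space equality $T_\rho\mathcal{X}_{\overline{\rho},S}^{\aut}=T_\rho\mathcal{X}_{\overline{\rho},S}$ at a smooth, crystalline $\varphi$-generic automorphic point, then collapse the dimension chain; feed in all refinements at $p$ via the eigenvariety; use the Borel-intersection theorem through the local model of the trianguline variety) is the same as the paper's. But the step where you transfer the local statement to the global one has a genuine gap. You identify $T_xL_{\underline{\mathcal{F}}}$ with the full preimage, under the localization map $T_x\mathcal{X}\rightarrow\bigoplus_{v\mid p}H^1(\mathcal{G}_{F_{\tilde v}},\mathrm{ad}\,\rho_x)$, of the trianguline subspaces, and you assert that this localization map is surjective because a dual Selmer group vanishes. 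That surjectivity is false in general: $\dim\bigoplus_{v\mid p}H^1(\mathcal{G}_{F_{\tilde v}},\mathrm{ad}\,\rho_x)\geq n^2[F^+:\Q]$ by the local Euler characteristic formula, whereas the conclusion you are driving at forces $\dim T_x\mathcal{X}=\dim T_xY_0=[F^+:\Q]\tfrac{n(n+1)}{2}$, which is strictly smaller for $n\geq 2$; so your intermediate claims are even inconsistent with your own endgame. What Allen's theorem (the input from adequacy/Taylor--Wiles) actually gives is not surjectivity of localization but the vanishing of the adjoint Bloch--Kato Selmer group, used in the paper as the statement that the tangent space of the crystalline locus meets the global tangent space trivially ($t_{R_{\cris}}\cap t_S=0$). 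Likewise, the identification of the eigenvariety ``leaf'' with a full preimage of local conditions is unjustified: the image of the eigenvariety through $\rho$ is far too small to contain the kernel of localization plus a preimage of each trianguline condition.

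The paper's actual mechanism is different and is where the real work lies: one patches (Taylor--Wiles--Kisin) so that the completed local rings of the patched eigenvariety at the points above $\rho$ are identified with (components of) the local quasi-trianguline deformation rings $R_{\overline{\rho}_{\tilde v},\mathcal{F}_v}^{w_v}$; one shows $\Spec S_{\underline{\mathcal{F}}}$ is cut out in $\Spec R_{\underline{\mathcal{F}}}$ by exactly $q$ equations (the patching variables), and since $\dim t_{R_{\cris}}=q$ and $t_{R_{\cris}}\cap t_{S_{\underline{\mathcal{F}}}}=0$ by Allen, one gets the direct sum $t_{R_{\underline{\mathcal{F}}}}=t_{R_{\cris}}\oplus t_{S_{\underline{\mathcal{F}}}}$; the local surjectivity theorem $\bigoplus_{\mathcal{F}}T\mathfrak{X}_{\mathcal{D},\mathcal{F}[\frac{1}{t}]}\twoheadrightarrow T\mathfrak{X}_{\mathcal{D}}$ then transfers to $\bigoplus_{\underline{\mathcal{F}}}t_{S_{\underline{\mathcal{F}}}}\twoheadrightarrow t_S$ by a diagram chase. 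Note also that for a critical refinement it is not enough to have ``a point $x_{\underline{\mathcal{F}}}$'' on the eigenvariety (the classical $p$-stabilization always exists): the quasi-trianguline deformation ring $R_{\rho_{\tilde v},\mathcal{F}_v[\frac{1}{t}]}$ has irreducible components indexed by all $w\geq w_{\mathcal{F}_v}$, and the eigenvariety only meets these through the full set of companion points $x_{\underline{\mathcal{F}},w}$ (with shifted weights $ww_0\cdot\lambda$), whose existence, the comparison of their local rings with the patched object, and the flatness of the weight map at them are essential inputs from the local model. Without this patching/codimension-count argument (or a correct substitute), the passage from the Borel-intersection theorem to the global tangent-space equality does not go through.
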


In the paper \cite{Allen2}, Patrick Allen proved that, assuming standard automorphy lifting conjectures, it is true that all irreducible components of the space $\mathcal{X}_{\overline{\rho},S}$ contain some regular conjugate self dual cuspidal automorphic point. As such points are smooth by \cite{Allen1}, Theorem \ref{maintheo} reaches substantial new cases of Conjecture \ref{conjdensity} under the standard automorphy lifting conjectures.

Let us also mention that David Guiraud proved in \cite{Guiraud} that, when the weight of $\pi$ satisfies a strong condition of regularity, the set of places $\lambda$ of $F$ where the reduction of the $\lambda$-adic representation associated to $\rho_\pi$ is unobstructed has density one.

Following \cite{Chefougere}, our strategy to prove Theorem \ref{maintheo} is to use base change results to deduce this result from an analogous result concerning automorphic forms on some definite unitary group $G$.

For this definite unitary group we can rely on a well developed theory of families of $p$-adic automorphic forms, so called eigenvarieties: there exists a rigid analytic space called the eigenvariety $Y(U^p,\overline{\rho})$ parametrizing overconvergent $p$-adic eigenforms on $G$. This space is a generalization of the eigencurve of Coleman and Mazur, and was first introduced by Chenevier in the setting of definite unitary groups. The existence of a family of Galois representations on $Y(U^p,\overline{\rho})$ gives rise to a map $Y(U^p,\overline{\rho})\rightarrow\mathcal{X}_{\overline{\rho},S}$. The image of this map is the so called ``infinite fern''. 

The main idea is to consider the Zariski-closure $\mathcal{X}_{\bar \rho, S}^{\aut}\subset \mathcal{X}_{\overline{\rho},S}$ of all automorphic points and show that each of its irreducible components contains a smooth point $\rho$ such that there is an equality of tangent spaces
\[T_\rho\mathcal{X}_{\bar \rho, S}^{\aut}=T_\rho\mathcal{X}_{\overline{\rho},S}.\]
We are hence reduced to proving that the left hand side is large enough. 

It is standard to prove that automorphic points form a Zariski dense subset of the eigenvariety and hence the canonical map
\begin{equation}\label{needsurjectivity}\bigoplus_x T_x Y(U^p,\overline{\rho})\longrightarrow T_\rho\mathcal{X}_{\overline{\rho},S}\end{equation}
factors through the tangent space $T_\rho\mathcal{X}_{\bar \rho, S}^{\aut}$, where the direct sum is indexed by all the preimages $x\in Y(U^p,\overline{\rho})$ of $\rho$.
Hence it will essentially suffice to prove that $(\ref{needsurjectivity})$ is surjective.  

One of the main results of \cite{BHS3} is the precise determination of this index set. In \cite{Chefougere} it is shown that the map in question is surjective, if the restriction of $\rho$ to the local Galois groups at places dividing $p$ satisfies some genericity assumption: roughly, the representations should be crystalline and the Hodge filtration in general position with respect to all possible Frobenius stable flags.
The main problem is that in higher dimensions there is (for the time being) no way to guarantee that $\mathcal{X}_{\overline{\rho},S}$ contains any point satisfying this assumption. The point of our paper is the proof of the surjectivity of $(\ref{needsurjectivity})$ without this genericity assumption.

As in \cite{Chefougere} we do so by proving a similar surjectivity result for local avatars of the spaces $\mathcal{X}_{\overline{\rho},S}$ and $Y(U^p,\overline{\rho})$: the global deformation ring is replaced by a local deformation ring, and the eigenvariety is replaced by the so called space of trianguline Galois representations.  The key construction of \cite{BHS3} is a local model for the space of trianguline representations. This local model allows us to reduce the surjectivity of $(\ref{needsurjectivity})$ to a problem in linear algebra.  

The problem is to determine the linear envelope of the intersection of a Borel algebra $\mathfrak{b}$ in the Lie-algebra $\mathfrak{gl}_n$ with the Weyl group translates of a fixed Borel $\mathfrak{b}_0$. This statement seems to be a very nice and interesting statement in its own right:

\begin{theo}\label{enveloppeBorelsintro}
Let $n$ be a positive integer, ${\mathfrak S}_{n}$ the symmetric group of order $n$, and $\mathfrak{gl}_n$ the algebra of $n\times n$ matrices with entries in a fixed field $k$. Let $\GL_{n}(k)$ be the group of the non-singular elements in  $\mathfrak{gl}_n$ and ${\mathfrak b}_{0}\subset \mathfrak{gl}_n$ the Borel subalgebra of upper triangular matrices. For any element $g\in\GL_{n}(k)$ let ${\mathfrak b}_{g}=g^{-1} {\mathfrak b}_{0} g$ denote the Borel subalgebra conjugate to ${\mathfrak b}_{0}$ by $g^{-1}$.
  
\noindent
Any Borel subalgebra ${\mathfrak b}$ coincides with the linear envelope of its intersections with the conjugate of ${\mathfrak b}_{0}$ under ${\mathfrak S}_{n}$
\[
{\mathfrak b}  = \sum_{w \in {\mathfrak S}_{n}}{\mathfrak b} \cap {\mathfrak b}_{w}.
\]
\end{theo}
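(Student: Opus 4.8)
My plan is to work without loss of generality with $\mathfrak{b}$ itself a conjugate of $\mathfrak{b}_0$, say $\mathfrak{b} = \mathfrak{b}_h = h^{-1}\mathfrak{b}_0 h$ for some fixed $h\in\GL_n(k)$; then the statement becomes $\mathfrak{b}_h = \sum_{w\in\mathfrak{S}_n}\mathfrak{b}_h\cap\mathfrak{b}_{wh}$. Since $\mathfrak{b}_h = h^{-1}\mathfrak{b}_0 h$ and $\mathfrak{b}_{wh} = h^{-1}(\dot w^{-1}\mathfrak{b}_0\dot w)h$ where $\dot w$ is a permutation-matrix representative of $w$, conjugating by $h$ reduces everything to the single flag $\mathfrak{b}_0$ and the data of where the coordinate flags $\dot w\cdot(\text{standard flag})$ sit relative to the flag fixed by $h$. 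The cleanest formulation: letting $F_0^\bullet$ be the standard complete flag in $k^n$ and $F^\bullet = h^{-1}F_0^\bullet$ an arbitrary complete flag, $\mathfrak{b}_h$ is the stabilizer of $F^\bullet$, and $\mathfrak{b}_h\cap\mathfrak{b}_{wh}$ is the space of endomorphisms preserving both $F^\bullet$ and the permuted coordinate flag $w\cdot F_0^\bullet$. So I want to show: an endomorphism $X$ preserving $F^\bullet$ is a sum of endomorphisms each of which preserves $F^\bullet$ and some coordinate flag $w\cdot F_0^\bullet$.

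The key step is a dimension-counting / basis-exhibiting argument. A natural spanning set for $\mathfrak{b}_h$ is obtained by choosing, for the fixed flag $F^\bullet$, a basis $v_1,\dots,v_n$ adapted to it ($F^i = \langle v_1,\dots,v_i\rangle$); then the rank-one operators $v_j\mapsto v_i$, $v_\ell\mapsto 0$ ($\ell\neq j$), for $i\leq j$, form a basis of $\mathfrak{b}_h$. The heart of the matter is to show each such elementary operator $E_{ij}^{(v)}$ (with $i\leq j$) lies in some $\mathfrak{b}_h\cap\mathfrak{b}_{wh}$, i.e. also preserves some coordinate flag. Equivalently, I need a complete flag which is simultaneously a coordinate flag (one of the $n!$ flags spanned by subsets of $e_1,\dots,e_n$) and is preserved by $E_{ij}^{(v)}$. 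This is where the combinatorial content of ``intersections of Borels'' enters, and I expect a reduction first to $n=2$ or $n=3$ and an inductive/block-triangular argument to be the technical core. The genuinely delicate point — the main obstacle — is that a \emph{single} coordinate flag need not be transverse enough to $F^\bullet$ to witness a given $E_{ij}^{(v)}$; one must cleverly choose, for each basis vector of $\mathfrak{b}_h$, an \emph{appropriate} Weyl translate, exploiting that there are $n!$ of them and that the bad locus (flags failing the transversality needed) is a proper closed condition. I anticipate the argument runs: reduce to $X$ upper triangular via $h$; decompose $X$ along its ``support'' as a sum of operators each supported on a single pair $(i,j)$; and for each, produce explicitly a permutation $w$ such that the coordinate flag $w\cdot F_0^\bullet$ is adapted both to the relevant piece of $F^\bullet$ and to the operator.

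An alternative, possibly slicker route: argue by induction on $n$ using a block decomposition. Pick the line $F^1\subset k^n$; among the $n$ coordinate hyperplanes $H_m = \langle e_\ell : \ell\neq m\rangle$, at least one — in fact all but possibly... — is complementary or contains $F^1$, and one can split $k^n = F^1\oplus H_m$ compatibly; an operator in $\mathfrak{b}_h$ preserving this splitting decomposes, and the ``off-diagonal'' part maps $H_m\to F^1$ and is easily seen to lie in a suitable $\mathfrak{b}_{wh}$, while the block on $H_m$ is handled by induction after checking the flag structures match up. The bookkeeping to make the two induction hypotheses (on $F^\bullet\cap H_m$ inside $H_m$, and on the Weyl group $\mathfrak{S}_{n-1}\hookrightarrow\mathfrak{S}_n$) fit together is, again, the place where I expect to spend the real effort; but once it is set up, the sum $\mathfrak{b}_h = \sum_w \mathfrak{b}_h\cap\mathfrak{b}_{wh}$ should follow formally. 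I would try the explicit elementary-operator approach first, as it keeps the combinatorics visible and makes the role of all $n!$ translates transparent.
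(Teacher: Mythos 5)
The step you call the heart of the matter is false, and this is a genuine gap rather than a technical delicacy. (First, a notational slip: the translates in the statement are $\mathfrak{b}_w=w^{-1}\mathfrak{b}_0w$, not $\mathfrak{b}_{wh}$; your identity $\mathfrak{b}_h=\sum_w\mathfrak{b}_h\cap\mathfrak{b}_{wh}$ is, after conjugating by $h$, the trivial assertion $\mathfrak{b}_0=\sum_w\mathfrak{b}_0\cap\mathfrak{b}_w$. Your flag reformulation --- operators preserving the arbitrary flag $F^\bullet$ and some coordinate flag --- is the correct one, so I address that.) A rank-one operator $x\mapsto\phi(x)u$ preserves some coordinate flag if and only if the coordinate supports of $u$ and of $\phi$ meet in at most one index. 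Now whatever adapted basis $v_1,\dots,v_n$ you choose, the corner operator $E^{(v)}_{1n}=v_1\otimes v_n^\ast$ is, up to scalar, determined by the flag alone: $v_1$ spans $F^1$ and $v_n^\ast$ spans the annihilator of $F^{n-1}$. If $F^1$ and that annihilator have full coordinate support --- which happens for any flag in general position --- then $E^{(v)}_{1n}$ preserves \emph{no} coordinate flag, so it lies in no $\mathfrak{b}\cap\mathfrak{b}_w$, and no choice of adapted basis or of ``appropriate Weyl translate'' can help: all $n!$ translates fail simultaneously. Already for $n=2$ and $F^1=\langle e_1+e_2\rangle$ the corner operator is, up to scalar, $\left(\begin{smallmatrix}1&-1\\1&-1\end{smallmatrix}\right)$, which is neither upper nor lower triangular; the theorem of course still holds there, via $\left(\begin{smallmatrix}1&-1\\1&-1\end{smallmatrix}\right)=\left(\begin{smallmatrix}0&-1\\0&-1\end{smallmatrix}\right)+\left(\begin{smallmatrix}1&0\\1&0\end{smallmatrix}\right)$, but the witnessing elements are not your elementary operators. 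The genericity heuristic (``the bad locus is a proper closed condition'') cannot repair this: the flag is fixed, and the base field is arbitrary (possibly finite). The same example defeats your inductive variant, since the off-diagonal block $H_m\to F^1$ of a splitting $k^n=F^1\oplus H_m$ is exactly such a corner-type operator and need not lie in any $\mathfrak{b}\cap\mathfrak{b}_w$.

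The missing idea, which is what the paper's Lemma \ref{lem1} supplies, is that one must \emph{perturb} the spanning rank-one operators rather than hunt for a transverse coordinate flag for the naive ones. The paper first reduces, via a Birkhoff-type factorization $g=ulp$ (upper triangular times lower triangular times permutation, Lemma \ref{lem2}), to proving $\mathfrak{b}_{w_0}=\sum_w\mathfrak{b}_{w_0}\cap\mathfrak{b}_{wu^{-1}}$ with $u\in B$; it then exhibits a basis of $\mathfrak{b}_{w_0}$ consisting not of the $e^{i,j}$ but of corrected elements $a^{i,j}=e^{i,j}+\sum_{l=j+1}^{i}x_{i,l}e^{i,l}$, where the row-supported correction coefficients solve a triangular linear system with unit diagonal entries $u_{l,l}$ (hence solvable over any field), arranged precisely so that $a^{i,j}\in\mathfrak{b}_{w_0}\cap\mathfrak{b}_{s_{(i,j)}u^{-1}}$ --- only transpositions are needed --- and the unipotent change of basis shows the $a^{i,j}$ still span $\mathfrak{b}_{w_0}$. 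So while your overall template (exhibit a basis of $\mathfrak{b}$ adapted to the flag, each element lying in one intersection) matches the paper's, the specific basis you propose cannot work, and the decisive construction --- how to deform each elementary operator within its row so as to land in an intersection --- is absent from your proposal.
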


The plan of the paper is the following. In a first section, we prove Theorem \ref{enveloppeBorelsintro} and prove as an application a surjectivity result for a map between tangent spaces of our local models. The second section is purely local and its purpose is to prove our main local result concerning the sum of tangent spaces of quasi-trianguline deformation spaces. Finally the last section is of global nature and contains the proof of our main global theorem.

\noindent\emph{Notation} : We fix a prime number $p$. 
Let $K$ be a finite extension of $\Qp$, $\overline{K}$ an algebraic closure of $K$ and $\widehat{\overline{K}}$ its completion for the unique valuation extending the $p$-adic valuation. We denote by $v_K$ the unique valuation of $K$ taking the value $1$ on uniformizers of $K$. We use the notation $\mathcal{G}_K$ for the Galois group $\Gal(\overline{K}/K)$. 

We fix $L$ a finite extension of $\Qp$. Let $\Sigma$ be the set of $\Qp$-algebra homomorphisms from $K$ to $L$. We choose $L$ big enough so that $|\Sigma|=[K:\Qp]$, or equivalently $L\otimes_{\Qp}K\simeq L^{[K:\Qp]}$,
and we denote by $\mathcal{O}_L$ the ring of integers of $L$, by $\mathfrak{m}_L$ the unique maximal ideal of $\mathcal{O}_L$ and by $k_L\coloneqq \mathcal{O}_L/\mathfrak{m}_L$ its residue field. Let $|\cdot|_p$ be the unique norm on $L$ inducing the $p$-adic norm on $\Qp$. Let $\epsilon_L$ be the character $N_{L/\Qp}|N_{L/\Qp}|$  from $L^\times$ to $\Zp^\times$. Let $\rec_L:\, L^\times\rightarrow W_L^{\ab}$ be the local reciprocity isomorphism sending a uniformizer of $L$ onto a \emph{geometric} Frobenius element. We have $\epsilon_L=\chi_{\cyc}\circ\rec_L$.

If $X$ is some algebraic variety defined over $K$, we will use the notation $X_{K/\Qp}$ for the Weil restriction of $X$ from $K$ to $\Qp$. Moreover if $Y$ is some algebraic variety defined over $\Qp$, we will use the notation $Y_L$ for the base change of $Y$ from $\Qp$ to $L$, so that $X_{K/\Qp,L}=(X_{K/\Qp})\times_{\Spec \Qp}\Spec L$. As we have $L\otimes_{\Qp}K\simeq L^{\Sigma}$, we have an isomorphism of algebraic varieties over $L$
\begin{equation}\label{split} X_{K/\Qp,L}\simeq\prod_{\tau\in\Sigma} X_{\tau} \end{equation}
where $X_{\tau}$ is the base change of $X$ from $K$ to $L$ via the embedding $\tau$. If $x$ is some $L$-point of $X_{K/\Qp}$ we will denote $(x_{\tau})\in\prod_{\tau\in\Sigma}X_{\tau}$ its image by the isomorphism \eqref{split}.

If $\mathbf{k}\in(\Z^n)^{[K:\Qp]}$, we define the algebraic character $\delta_{\mathbf{k}}:\,T(K)\rightarrow L^{\times}$ by the formula
\[ (a_1,\dots,a_n)\mapsto \prod_{i=1}^n\prod_{\tau\in\Sigma}\tau(a_i)^{k_{i,\tau}}\]

If $X$ is a scheme, or a rigid analytic space and $x\in X$ is a point, we write $T_xX$ for the tangent space of $X$ at $x$. Similarly, if $\mathfrak{X}$ is a deformation functor defined on local Artin rings with fixed residue field (or a formal scheme pro-representing such a functor), we write $T\mathfrak{X}$ for the tangent space of $\mathfrak{X}$ at the unique closed point. 

\section{On intersections of Borel algebras}\label{intersection}

\subsection{Envelopes of intersections of Borel subalgebras}

Let $n$ be a positive integer and $k$ a field. We denote by $\mathfrak{gl}_n$ the Lie algebra of $n\times n$-matrices with coefficients in $k$ and by $\mathfrak{b}_0\subset \mathfrak{gl}_n$ the Borel subalgebra of upper triangular matrices. Given an element $g\in \GL_{n}(k)$ we write ${\mathfrak b}_{g}= g^{-1}{\mathfrak b}_{0} g$ for the conjugate of $\mathfrak{b}_0$ by $g$. We denote by $B$ the subgroup of upper triangular matrices in $\GL_{n}(k)$ and by $W$ the Weyl group $N/T$, where $N$ stands for the subgroup of matrices with exactly one non-zero entry in each row and each column and $T$ for the subgroup of diagonal matrices, $T= B\cap N$; the Weyl group $W$ identifies with the subgroup of $\GL_{n}(k)$ of $n \times n$ permutation matrices, and as such is isomorphic to the group of permutations over $n$ elements,  ${\mathfrak S}_{n}$. When speaking of elements of maximal length in $W$ we refer to the generating set $S$ of $W$ whose elements are (the permutation matrices associated with) the transpositions $(j,j+1)$, $j\in [[1,n-1]]$, so that the quadruple $(\GL_{n}(k),B,N,S)$ forms a Tits system (\cite[IV, 2.2]{BourbakiLie456}). Hereafter we freely identify an element $w$ of $W$ with its image in $\GL_{n}(k)$, the associated permutation matrix, and with its image in ${\mathfrak S}_{n}$, the underlying permutation. All scalars to be considered will be taken in $k$.

\noindent
By its very definition the linear envelope $\sum_{w \in W } {\mathfrak b} \cap {\mathfrak b}_{ w}$ of the intersection of any Borel subalgebra ${\mathfrak b}\subset \mathfrak{gl}_n$ with the conjugates of ${\mathfrak b}_{0}$ under the Weyl group, is contained in ${\mathfrak b}$; we discuss here the reverse inclusion and show the nice identity,
\[ {\mathfrak b} = \sum_{w \in W} {\mathfrak b} \cap {\mathfrak b}_{w},\]

\noindent
that states the envelope does coincide with ${\mathfrak b}$.

\noindent
Since any Borel subalgebra of $\mathfrak{gl}_n$ is a conjugate of the standard Borel subalgebra ${\mathfrak b}_{0}$, it will be enough to establish the identity for ${\mathfrak b} = {\mathfrak b}_{g}$, for an arbitrary element  $g \in \GL_{n}(k)$. 
By the Bruhat decomposition, every such element $g$ splits as a product $g=u_{1}s u_{2}$ of two (invertible) upper triangular matrices, $u_{1}$ and $u_{2}$ and a permutation matrix $s$ associated to a permutation $s \in {\mathfrak S}_{n}$, so that the identity to discuss reads
\[
{\mathfrak b}_{su} = \sum_{w \in W} {\mathfrak b}_{su} \cap {\mathfrak b}_{w},
\]
\noindent
which is to hold for an arbitrary $n\times n$ permutation matrix $s$ and an arbitrary upper triangular matrix $u \in B=\GL_{n}(k) \cap {\mathfrak b}_{0} $.

\vspace{1cm}
\par

In a first part we settle this identity for $w_{0}$ {\em the} permutation of maximal length in ${\mathfrak S}_{n}$, i.e. the involution ~$w_{0}=(1,n)(2,n-1)...(\lfloor{\frac{n}{2}}\rfloor,n-\lfloor\frac{n}{2}\rfloor +1)$. Since conjugation by any element $g \in \GL_{n}(k)$, is a linear isomorphism of $\mathfrak{gl}_n$, the conjugate of a linear envelope coincides with the envelope of the conjugates, and since intersection and conjugation trivially commute, we find \[\sum_{w \in W } {\mathfrak b}_{w_0 u} \cap {\mathfrak b}_{w} =  \sum_{w\in W} ({\mathfrak b}_{w_0} \cap {\mathfrak b}_{wu^{-1}})_{u}.\] Hence the envelope $\sum_{ w\in W } {\mathfrak b}_{w_0 u} \cap {\mathfrak b}_{w}$ coincides with the Borel subalgebra ${\mathfrak b}_{w_0 u}$ if and only if ${\mathfrak b}_{w_0} = \sum_{ w\in W} {\mathfrak b}_{w_0} \cap {\mathfrak b}_{wu^{-1}}$ (the reader will note the Borel subalgebra ${\mathfrak b}_{w_0}$ coincides with the Borel algebra of lower triangular matrices).

The proof proceeds through an explicit ``d\'evissage'', which the following lemma will make clear; It does {\em not} rely on any induction on the dimension, nor does it require any further assumption on the fixed base field $k$: anyone will do. The elementary $n\times n$ matrix whose $(l,m)$-entry is given by $\delta_{i,l}\delta_{j,m}$ for $l,m \in [\![1,n]\!]$ will be denoted by $e^{i,j}$ and by $x_{i,j}$, $i, j \in [\![1,n]\!]$, we will denote scalars in the base field $k$.

\begin{lemm}{\label{lem1}} Let $k$ be an arbitrary field. For any $u\in B$, and for any ordered pair $(i,j)$ in $[\![1,n]\!]^{2}$, $i \geq j$, there is some permutation $s_{i,j} \in {\mathfrak S}_{n}$, and $i-j$ scalars $(x_{i,l})_{l=j+1}^{i}$ in $k$, such that the matrix $ a^{i,j}\coloneqq e^{i,j} + \sum_{l=j+1}^{i}x_{i,l}e^{i,l}$ lies in the Borel subalgebra ${\mathfrak b}_{s_{i,j}u^{-1}}$.

  The matrices $a^{i,j}$ with $(i,j)$ in $[\![1,n]\!]^{2}$, $i \geq j$, then form a basis of the Borel subalgebra ${\mathfrak b}_{w_0}$ of lower triangular matrices in $\mathfrak{gl}_n$ all elements of which lie in the envelope $\sum_{w \in {\mathfrak S}_{n}} {\mathfrak b}_{w_0} \cap {\mathfrak b}_{w u^{-1}}$.
  
The permutation  $s_{i,j}$ may be chosen to be the $(i,j)$-transposition.
\end{lemm}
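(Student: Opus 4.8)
\emph{Proof strategy.} The plan is to translate membership in a conjugate Borel into a flag condition and then to solve a triangular linear system. Recall that $\mathfrak{b}_g=g^{-1}\mathfrak{b}_0g$ is the stabiliser of the complete flag whose step of dimension $r$, say $F_g^{(r)}$, is spanned by the first $r$ columns of $g^{-1}$, because $\mathfrak{b}_0$ stabilises the standard flag $\langle e_1\rangle\subset\langle e_1,e_2\rangle\subset\cdots$. Concretely I would: (i) take $s_{i,j}$ to be the transposition $(i,j)$ and make the flag of $\mathfrak{b}_{s_{i,j}u^{-1}}$ explicit; (ii) isolate the flag conditions that $a^{i,j}$ satisfies for free, whatever the scalars; (iii) solve the remaining conditions for the scalars $x_{i,l}$; (iv) prove the echelon/basis statement and conclude.

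First, write $v_1,\dots,v_n$ for the columns of $u$ and $u_{m,l}$ for its entries; since $u$ is upper triangular and invertible, $\langle v_1,\dots,v_m\rangle=\langle e_1,\dots,e_m\rangle$ for every $m$. As $(s_{i,j}u^{-1})^{-1}=us_{i,j}$ with $s_{i,j}=(i,j)$, the columns of $us_{i,j}$ are $v_1,\dots,v_{j-1},v_i,v_{j+1},\dots,v_{i-1},v_j,v_{i+1},\dots,v_n$; hence $F_{s_{i,j}u^{-1}}^{(r)}=\langle e_1,\dots,e_r\rangle$ for $r\le j-1$ and for $r\ge i$, while for $j\le r\le i-1$ one has $F_{s_{i,j}u^{-1}}^{(r)}=\langle e_1,\dots,e_{j-1}\rangle+\langle v_i,v_{j+1},\dots,v_r\rangle$ --- the standard step, but with $v_i$ inserted in place of the not-yet-available $v_j$.

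Next, the matrix $a^{i,j}=e^{i,j}+\sum_{l=j+1}^{i}x_{i,l}e^{i,l}$ has image inside $\langle e_i\rangle$ and kills $e_m$ for $m<j$ and for $m>i$; hence, whatever the scalars, it annihilates $F_{s_{i,j}u^{-1}}^{(r)}$ for $r\le j-1$ and sends $F_{s_{i,j}u^{-1}}^{(r)}$ into $\langle e_i\rangle\subseteq F_{s_{i,j}u^{-1}}^{(r)}$ for $r\ge i$. Therefore $a^{i,j}$ lies in $\mathfrak{b}_{s_{i,j}u^{-1}}$ as soon as it annihilates $F_{s_{i,j}u^{-1}}^{(i-1)}=\langle e_1,\dots,e_{j-1},v_i,v_{j+1},\dots,v_{i-1}\rangle$, which contains every earlier step; as $a^{i,j}$ already kills $e_1,\dots,e_{j-1}$, this amounts to $a^{i,j}(v_l)=0$ for $l\in\{j+1,\dots,i-1\}$ and $l=i$. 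Using
\[ a^{i,j}(v_l)=\Big(u_{j,l}+\sum_{m=j+1}^{l}u_{m,l}\,x_{i,m}\Big)e_i,\]
these $i-j$ equations form a lower triangular system in the unknowns $x_{i,j+1},\dots,x_{i,i}$ with nonzero diagonal coefficients $u_{l,l}$, so it has a (unique) solution over the arbitrary field $k$; for that choice $a^{i,j}\in\mathfrak{b}_{w_0}\cap\mathfrak{b}_{s_{i,j}u^{-1}}$. The case $i=j$ is the triviality $e^{i,i}\in\mathfrak{b}_0=\mathfrak{b}_{u^{-1}}$.

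Finally, each $a^{i,j}$ is supported on row $i$, with coefficient $1$ in column $j$ and $0$ in columns $<j$; reading row $i$ from left to right, $(a^{i,1},\dots,a^{i,i})$ is in echelon form with pivots in the distinct columns $1,\dots,i$, so $(a^{i,j})_{i\ge j}$ is linearly independent. As there are $n(n+1)/2=\dim\mathfrak{b}_{w_0}$ of them, they form a basis of the space $\mathfrak{b}_{w_0}$ of lower triangular matrices; since each lies in $\sum_{w\in{\mathfrak S}_n}\mathfrak{b}_{w_0}\cap\mathfrak{b}_{wu^{-1}}$ (take $w=s_{i,j}$), it follows that $\mathfrak{b}_{w_0}\subseteq\sum_{w}\mathfrak{b}_{w_0}\cap\mathfrak{b}_{wu^{-1}}$, hence equality. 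The step I expect to be the real obstacle is the very choice in (i): recognising that the transposition $(i,j)$ does the job, i.e.\ that it produces a flag differing from a standard one only along the block of steps $j,\dots,i-1$, is exactly what makes the stabiliser condition on the ``single nonzero row'' matrix $a^{i,j}$ collapse to the annihilation of one explicit subspace; everything afterwards is characteristic-free triangular elimination, the only care needed being the boundary cases ($r\le j-1$, $r\ge i$, $i=j$) and the observation that nothing beyond $u_{l,l}\neq0$ is ever used.
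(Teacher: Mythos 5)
Your proof is correct and follows essentially the same route as the paper: you make the same choice of the transposition $(i,j)$ and the same row-$i$ matrices $a^{i,j}$, and you arrive at exactly the paper's lower triangular system in the unknowns $x_{i,j+1},\dots,x_{i,i}$ with diagonal $u_{j+1,j+1},\dots,u_{i,i}$, solved using only $u_{l,l}\neq 0$. The only difference is presentational: you verify membership in $\mathfrak{b}_{s_{i,j}u^{-1}}$ via the flag-stabilizer description (annihilating the single step $F^{(i-1)}$), whereas the paper computes $u^{-1}a^{i,j}u$ explicitly and checks that its conjugate by $s_{(i,j)}$ is upper triangular; likewise your echelon argument for the basis replaces the paper's unipotent change-of-basis remark, with no change in substance.
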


\begin{proof}[Proof of Lemma \ref{lem1}] Let's denote by $u_{i,j}$ the entries of the upper triangular matrix $u$, so that $u_{i,j} \in k$, $u_{i,j}=0$ if $i>j$, and $\prod_{i=1}^{n}u_{i,i}\neq 0$. From the properties of the inverse matrice $u^{-1}$ of $u$ we will only use it is some non-singular upper triangular matrix, which we'll denote by $v$.

\noindent
Because of obvious support properties of the involved matrices, the general entry of the product matrix $p\coloneqq va^{i,j}u$ vanishes \--- i.e. $p_{l,m}$=0 \--- for all $l, l \in [\![1,n]\!]$, as soon as $m \in [\![1,j-1]\!]$ and for all $m, m \in [\![1,n]\!]$, as soon as $l \in [\![i+1,n]\!]$: in words, the first $j-1$ columns and last $n-i$ rows of the product $p$ vanish identically. One may further observe all entries of the matrix $q\coloneqq a^{i,j}u$ vanish on the same ground, but for the entries $q_{i,l}$, $l \in [\![j,n]\!]$, which are given by the identities: $q_{i,l}= u_{j,l}+ \sum_{m=j+1}^{\min(k,i)}u_{m,l}x_{i,m}$. From these identities we can then easily deduce the following expression for the entries $p_{a,b}$, $a,b \in [\![j,i]\!]$:

\begin{equation}{\label{1}}
p_{a,b}= v_{a,i}(u_{j,b}+u_{j+1,b}x_{i,j+1}+...+u_{b,b}x_{i,b}) = v_{a,i}(u_{j,b} + \sum_{l=j+1}^{b}u_{l,b}x_{i,l}).
\end{equation}

\noindent
It can be further remarked that the matrix $r\coloneqq p_{s_{(i,j)}}$, obtained by swapping both the $i$-th and $j$-th columns and rows, \--- so that $p_{s_{(i,j)}}$ coincides with the conjugate of $p$ by the permutation matrix $s_{(i,j)}$ associated with the $(i,j)$-transposition, consistently with previously introduced notations \---, shares with $p$ the property that all entries below the main diagonal and outside the block $l\in [\![j,i]\!] $, $m \in [\![j,i]\!]$ \--- a $(i-j+1) \times (i-j+1)$-submatrix \---, vanish, to the effect that $r$ will lie in ${\mathfrak b}_{0}$, $p$ in ${\mathfrak b}_{s_{(i,j)}}$ and so $a^{i,j}$ in ${\mathfrak b}_{s_{(i,j)}u^{-1}}$ as soon as all sub-diagonal entries of this block in $r$ cancel.

\noindent
Identity (\ref{1}) above gives precisely the following expressions for the entries $r_{a,b}$ in the block: first, for all $b \in [\![j+1,i-1]\!]$ and all $a \in [\![j+1,i-1]\!]$,
\begin{align*}
r_{j,b}&=p_{i,b}= v_{i,i}\ (u_{j,b}+ \sum_{l=j+1}^{b}u_{l,b}x_{i,l},)\\
r_{a,b}&=p_{a,b}= v_{a,i}\ (u_{j,b}+ \sum_{l=j+1}^{b}u_{l,b}x_{i,l}),\ \text{and}\\
r_{i,b}&=p_{j,b}= v_{j,i}\ (u_{j,b}+ \sum_{l=j+1}^{b}u_{l,b}x_{i,l});
\end{align*}

\noindent
As for the first column of the block, which is to extract from the $j$-th column of $r$, itself coinciding with the $i$-th column of $p$ \--- up to swapping the $i$-th and $j$-th entries \---, we get

\begin{align*}
r_{j,j}&=p_{i,i}= v_{i,i}\ (u_{j,i}+ \sum_{l=j+1}^{i}u_{l,i}x_{i,l}),\\
r_{a,j}&=p_{a,i}= v_{a,i}\ (u_{j,i}+ \sum_{l=j+1}^{i}u_{l,i}x_{i,l}),\\
r_{i,j}&=p_{j,i}= v_{j,i}\ (u_{j,i}+ \sum_{l=j+1}^{i}u_{l,i}x_{i,l}).
\end{align*}

\noindent
At this point it should be clear the $b$-th column, $b \in [\![j+1,i-1]\!]$, vanishes identically as soon as the following affine relation is satisfied by the scalars $(x_{i,l})_{l=j+1}^{i}$

\begin{equation}{\label{2}}
u_{j,b}+ \sum_{l=j+1}^{b}u_{l,b}x_{i,l}=0  .
\end{equation}

From the second set of equations, it follows the $j$-th column will also cancel provided the following affine relation 
is satisfied by the scalars $(x_{i,l})_{l=j+1}^{i}$

\begin{equation}{\label{3}}
u_{j,i}+ \sum_{l=j+1}^{i}u_{l,i}x_{i,l}=0  .
\end{equation}

\noindent
It remains to organize the system of equations (\ref{2}) \--- for $b \in [\![j+1,i-1]\!]$ \---, and the equation (\ref{3}) into some lower triangular linear system of size $i-j$ as follows

\begin{equation}{\label{4}}
\begin{pmatrix} u_{j+1,j+1} & 0 & 0 & \hdots & 0 \\ u_{j+1,j+2} & u_{j+2,j+2} & 0 & \hdots & 0 \\  u_{j+1,j+3} & u_{j+2,j+3} & u_{j+3,j+3} & \hdots & 0 \\\vdots & \vdots & \vdots & \ddots & \vdots \\ u_{j+1,i} & u_{j+2,i} & u_{j+3,i} & \hdots & u_{i,i} \end{pmatrix}\quad \begin{pmatrix} x_{i,j+1}\\x_{i,j+2}\\x_{i,j+3}\\\vdots\\x_{i,i} \end{pmatrix} = - \begin{pmatrix} u_{j,j+1}\\u_{j,j+2}\\u_{j,j+3}\\\vdots\\u_{j,i} \end{pmatrix}.
\end{equation}

\noindent
Since the matrix $u$ is non-singular by assumption, the product $\prod_{l=j+1}^{i}u_{l,l}$ does not vanish and the previous system admits a (unique) solution; for such a solution ${\underline x}= (x_{i l})_{l=j+1}^{i}$ the matrix $a^{i,j}\coloneqq  a^{i,j}({\underline x})$, $i, j  \in [\![1,n]\!]$, $i \geq j$, lies in ${\mathfrak b}_{s_{(i,j)}u^{-1}} \cap {\mathfrak b}_{w_0}$, where as above we denote by $s_{(i,j)}$ the permutation matrix in $GL_{n}(k)$ associated with the $(i,j)$-transposition in ${\mathfrak S}_{n}$. This closes the proof of the first and main claim in Lemma \ref{lem1}.

By construction one then passes from the matrices $e^{i,j}$ to the matrices $a^{i,j}$, $i, j  \in [\![1,n]\!]$, $i \geq j$, by some unipotent triangular matrix (with many zeros, since it is $n$-block diagonal), provided the order we choose on the set $\{(i,j), i, j  \in [\![1,n]\!]$, $i \geq j \}$ is compatible with the row order, i.e. such that for all $i \in [\![1,n]\!] $ $(i,j)< (i,k) $ if $j<k$ (the lexicographic order clearly has the property); the matrices  $a^{i,j}$, $i, j  \in [\![1,n]\!]$, $i \geq j$, then form a basis of the Borel subalgebra ${\mathfrak b}_{w_0}$, while each one lying in exactly one of the generators $ {\mathfrak b}_{w_0} \cap {\mathfrak b}_{s_{(i,j)}u^{-1}}$ of the envelope $\sum_{w \in \mathfrak W} {\mathfrak b}_{w_0} \cap {\mathfrak b}_{wu^{-1}}$, the second and last claim in Lemma \ref{lem2}.
\end{proof}

The reader may want to notice the above argument reaches the stronger statement that for all $u$, $u \in B$, ${\mathfrak b}_{w_{0}u}=$ $\sum_{t \in {\mathfrak T}_{n}} {\mathfrak b}_{w_{0}u} \cap {\mathfrak b}_{t}$, where the sum is taken over the subset ${\mathfrak T}_{n} \subset W_{0}$ consisting of the identity and the $i,j$-transpositions, $n\geq i>j\geq 1$, a small subset of ${\mathfrak S}_{n}$ with only $(n^{2}-n+2)/2$ elements.

We now discuss the details of the reduction of the general statement to Lemma \ref{lem1}, although it may look less surprizing to trained minds.
Let's proceed to the reduction. The following lemma is a direct consequence of a refined version of the Bruhat decomposition established in \cite[Thm.~5.15]{BorelTits}.

\begin{lemm}{\label{lem2}}
  Let $k$ be an arbitrary field. Any $n\times n$-matrix in $\mathfrak{gl}_n$ splits as the product of an upper triangular matrix by a lower triangular matrix by a permutation matrix: for all matrices $m \in \mathfrak{gl}_n$, there exist an upper triangular matrix $u$, a lower triangular matrix $l$, and a permutation matrix $p$, such that $m=ulp$.

If useful, one may require the upper triangular $u$  or the lower triangular $l$ to be unipotent (but not both simultaneously of course). 
  
\end{lemm}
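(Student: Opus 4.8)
The plan is to prove Lemma \ref{lem2} by a direct appeal to the refined Bruhat decomposition of \cite[Thm.~5.15]{BorelTits}, rephrased for the group $\GL_n(k)$ with its $BN$-pair, and then to transport the decomposition by the long Weyl element $w_0$. Recall that $\GL_n(k)$ decomposes as a disjoint union of double cosets $B w B$ over $w\in W$, and more precisely one has the Birkhoff-type decomposition $\GL_n(k)=\coprod_{w\in W} B w B^-$, where $B^-=w_0 B w_0$ is the Borel of lower triangular matrices; this is exactly the statement that every $g\in\GL_n(k)$ can be written $g=u\,w\,l$ with $u\in B$ upper triangular, $l\in B^-$ lower triangular and $w$ a permutation matrix. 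Setting $p=w$ (a permutation matrix) this already gives $g=u\,p\,l$; to reach the form $m=u\,l'\,p'$ claimed in the lemma I would push the lower triangular factor past $p$: conjugating, $p\,l = (p\,l\,p^{-1})\,p$, and $p\,l\,p^{-1}$ is a permutation-conjugate of a lower triangular matrix, which need not be triangular — so that naive move fails, and one must instead start from the decomposition in the order already matching the conclusion.

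Concretely, the cleanest route is: first establish $\GL_n(k)=\coprod_{w\in W} B^- w B$, i.e. every invertible $g$ is $g = l\,w\,u$ with $l$ lower triangular, $w$ a permutation, $u$ upper triangular. This is the Bruhat decomposition for the opposite Borel $B^-$, equivalently the usual Bruhat decomposition applied to $w_0 g$ or to $g^{\mathrm{T}}$. Then, for an arbitrary (possibly singular) $m\in\mathfrak{gl}_n$, I would reduce to the invertible case by the rank-stratified refinement of \cite[Thm.~5.15]{BorelTits}: that theorem provides, for any $m$, a ``Bruhat cell'' decomposition indexed by partial permutation matrices $p$ (matrices with at most one nonzero entry, equal to $1$, in each row and column), of the shape $m = u\,p\,l$ with $u\in B$, $l\in B^-$, and moreover one can normalize $u$ (or $l$) to be unipotent. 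Transposing / reflecting by $w_0$ as needed rearranges this into the order $m = u\,l\,p$ demanded in the statement. So the proof is really: quote the precise form of \cite[Thm.~5.15]{BorelTits}, observe it is symmetric under transposition and under multiplication by permutation matrices, and pick the normalization that yields the ordering $ulp$ together with the unipotence of whichever triangular factor is convenient.

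The one genuine subtlety — and the step I expect to need the most care — is the singular case. For $m$ invertible, $p$ is an honest permutation matrix and everything is classical; but the lemma is stated for all $m\in\mathfrak{gl}_n$, and for $m$ of rank $r<n$ the ``permutation matrix'' in a literal Bruhat decomposition is only a partial permutation matrix. One must therefore either (i) interpret the statement as allowing such degenerate $p$ — but the lemma explicitly says ``permutation matrix'', so instead (ii) show that one can always complete a partial permutation to a full permutation matrix by absorbing the discrepancy into $u$ and $l$. The point is that if $p_0$ is a partial permutation of rank $r$ we may choose a full permutation $p\supseteq p_0$ (filling the empty rows/columns arbitrarily with $1$'s in a way that is still a permutation), and then $p_0 = \pi_L\,p\,\pi_R$ for suitable idempotent-like upper/lower triangular corrections — more precisely $p_0$ and $p$ differ by multiplication by triangular matrices supported so as to kill the added entries. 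Checking that these correction matrices are genuinely upper resp. lower triangular, and can be merged with the existing $u$ and $l$ without destroying triangularity, is the combinatorial heart of the argument; it amounts to the observation that the rows/columns where $p$ and $p_0$ disagree can be ordered so that the needed elimination is triangular. Once that bookkeeping is done, the unipotence refinement is immediate from the corresponding clause in \cite[Thm.~5.15]{BorelTits}, since multiplying a triangular matrix by the diagonal of its diagonal entries (invertible in the unipotent-complement) is a triangular operation. I would write this up as: state the needed consequence of \cite{BorelTits}, then a short paragraph handling the rank drop, then conclude.
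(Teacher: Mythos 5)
Your central step does not work as described, and it is exactly the move you yourself ruled out in your first paragraph. Every decomposition your plan actually produces --- $g=u\,w\,l$ from $\GL_n(k)=\bigcup_{w}BwB^-$, $g=l\,w\,u$ from $\bigcup_w B^-wB$, or the claimed $m=u\,p\,l$ with $p$ a partial permutation --- has the (partial) permutation sandwiched between the two triangular factors, whereas the lemma asserts the order (upper)$\cdot$(lower)$\cdot$(permutation). The sentence ``transposing / reflecting by $w_0$ as needed rearranges this into the order $m=ulp$'' is the gap: transposition reverses the order of the factors and swaps upper with lower, conjugation by $w_0$ swaps upper with lower, and multiplying a triangular factor by a permutation destroys its triangularity; none of these operations ever moves the permutation out of the middle slot, so the sandwich form is never converted into the form $ulp$. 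What the paper's one-line citation of Borel--Tits is actually invoking is the \emph{refined} Bruhat decomposition, in which the cell $BwB$ is written as $B\,w\,(U\cap w^{-1}U^-w)$ (equivalently $(U\cap wU^-w^{-1})\,w\,B$), with $U^-$ the lower triangular unipotent group: the essential point, absent from your write-up, is that the second unipotent factor is chosen so that conjugating it across $w$ \emph{stays triangular}, whence $g=b\,w\,u'=b\,(wu'w^{-1})\,w=u\,l\,p$ with $l$ lower unipotent (and, for invertible $g$, sliding the diagonal of $u$ into $l$ gives the variant with $u$ unipotent). That conjugation-compatibility is the whole content of the lemma in the invertible case; the paper gives no argument beyond this citation.

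Your treatment of the singular case also rests on a misreading of the reference: Borel--Tits, Thm.~5.15 concerns the group of rational points of a reductive group, i.e.\ invertible elements, and contains no rank-stratified decomposition of $\mathfrak{gl}_n$ indexed by partial permutation matrices. Such a monoid Bruhat decomposition does exist, but it is not in the cited theorem, and even granting it you would still face the ordering problem above, plus the partial-permutation completion which you only sketch. (Note the paper only ever applies the lemma to invertible $g$.) If you want the statement as printed, the cheapest complete argument is elementary: Gaussian elimination with row pivoting works over any field and for singular matrices, giving $A=p\,l\,u$ with $p$ a permutation, $l$ unit lower triangular and $u$ upper triangular (possibly with zeros on its diagonal) for every $A\in\mathfrak{gl}_n$. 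Applying this to $A=w_0\,m^{\mathrm{T}}\,w_0$ and transposing back --- conjugation by $w_0$ exchanges upper and lower triangular matrices, transposition reverses the order of the factors --- yields $m=u\,l\,p$ with $l$ unipotent, and the variant with $u$ unipotent is obtained by the symmetric (column-pivoting) elimination. This settles the invertible and singular cases and the unipotence clause at once, without any completion bookkeeping.
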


Let's turn back to our main object, realizing a Borel subalgebra as the envelope of its intersections with the conjugates of any fixed Borel subalgebra under the Weyl group. From Lemmas \ref{lem1} and \ref{lem2} we can deduce the following statement.

\begin{theo}\label{enveloppeBorels}
Let $n$ be a positive integer, ${\mathfrak S}_{n}$ the symmetric group of order $n$, and $\mathfrak{gl}_n$ the algebra of $n\times n$ matrices with entries in a fixed field $k$. Let $\GL_{n}(k)$ be the group of the non-singular elements in  $\mathfrak{gl}_n$ and ${\mathfrak b}_{0}\subset \mathfrak{gl}_n$ the Borel subalgebra of upper triangular matrices. For any element $g\in\GL_{n}(k)$ let ${\mathfrak b}_{g}=g^{-1} {\mathfrak b}_{0} g$ denote the Borel subalgebra conjugate to ${\mathfrak b}_{0}$ by $g^{-1}$.
  
\noindent
Any Borel subalgebra ${\mathfrak b}$ coincides with the linear envelope of its intersections with the conjugate of ${\mathfrak b}_{0}$ under ${\mathfrak S}_{n}$
\[
{\mathfrak b}  = \sum_{w \in {\mathfrak S}_{n}}{\mathfrak b} \cap {\mathfrak b}_{w}.
\]
\end{theo}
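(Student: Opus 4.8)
The plan is to derive the theorem from Lemmas \ref{lem1} and \ref{lem2} by a pure transport-of-structure argument, with no new computation. Since every Borel subalgebra of $\mathfrak{gl}_n$ is $\GL_n(k)$-conjugate to $\mathfrak{b}_0$, it suffices to treat $\mathfrak{b}=\mathfrak{b}_g$ for an arbitrary $g\in\GL_n(k)$. Two elementary facts will be used. First, a left upper-triangular factor is invisible: if $u\in B$ then $\mathfrak{b}_{ug'}=\mathfrak{b}_{g'}$ for every $g'$, because $u^{-1}\mathfrak{b}_0u=\mathfrak{b}_0$ (a product of upper triangular matrices is upper triangular, and one concludes by dimension). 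Second, conjugation by the Weyl group permutes the relevant Borel subalgebras: for $w,w'\in W$ one has $(\mathfrak{b}_w)_{w'}=w'^{-1}\mathfrak{b}_ww'=\mathfrak{b}_{ww'}$, so conjugation by $w'$ acts on the family $\{\mathfrak{b}_w\}_{w\in W}$ as right translation by $w'$ on the index set $W$.

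Now apply Lemma \ref{lem2} to write $g=u\,l\,p$ with $u$ upper triangular, $l$ lower triangular, and $p$ a permutation matrix; since $g$ is invertible so are $u$ and $l$. Because $w_0^{-1}=w_0$ and conjugation by $w_0$ interchanges lower and upper triangular matrices, we may write $l=w_0\,u'\,w_0$ with $u'\in B$. Hence $g=u\,w_0\,u'\,w_0\,p$, and the first fact above gives $\mathfrak{b}_g=\mathfrak{b}_{w_0u'\cdot w_0p}=(w_0p)^{-1}\mathfrak{b}_{w_0u'}(w_0p)=(\mathfrak{b}_{w_0u'})_{w_0p}$. Lemma \ref{lem1}, applied with $u'$ in place of $u$, yields $\mathfrak{b}_{w_0u'}=\sum_{w\in W}\mathfrak{b}_{w_0u'}\cap\mathfrak{b}_w$. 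Conjugating this identity by the Weyl element $w_0p$ and using that conjugation is a linear automorphism of $\mathfrak{gl}_n$, hence commutes with finite sums of subspaces and with intersections, one obtains $\mathfrak{b}_g=(\mathfrak{b}_{w_0u'})_{w_0p}=\sum_{w\in W}(\mathfrak{b}_{w_0u'})_{w_0p}\cap(\mathfrak{b}_w)_{w_0p}=\sum_{w\in W}\mathfrak{b}_g\cap\mathfrak{b}_{w(w_0p)}$. Since $w\mapsto w(w_0p)$ is a bijection of $W$, the last sum equals $\sum_{w\in W}\mathfrak{b}_g\cap\mathfrak{b}_w$, which is the desired identity.

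All the genuine content sits in Lemma \ref{lem1} — the explicit dévissage exhibiting a basis of $\mathfrak{b}_{w_0}$ lying in the envelope — and in the refined Bruhat decomposition behind Lemma \ref{lem2}; the step above is purely formal and I do not expect a real obstacle in it. The only point deserving a moment's care is the final reindexing: one must check that conjugation by $w_0p\in W$ carries the generating family $\{\mathfrak{b}_{w_0u'}\cap\mathfrak{b}_w\}_{w\in W}$ of the envelope of $\mathfrak{b}_{w_0u'}$ bijectively onto the generating family $\{\mathfrak{b}_g\cap\mathfrak{b}_w\}_{w\in W}$ of the envelope of $\mathfrak{b}_g$, which is exactly what the two elementary facts ensure. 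Combined with the strengthening recorded after Lemma \ref{lem1}, the same computation shows moreover that $\mathfrak{b}_g$ is already the sum of its intersections with the $\mathfrak{b}_w$ for $w$ ranging over the translate $\mathfrak{T}_n\,(w_0p)$ of the $(n^2-n+2)/2$-element set $\mathfrak{T}_n$.
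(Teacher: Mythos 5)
Your proposal is correct and follows essentially the same route as the paper: decompose $g=ulp$ via Lemma \ref{lem2}, conjugate the lower-triangular factor to $w_0u'w_0$, invoke Lemma \ref{lem1} (in its equivalent form $\mathfrak{b}_{w_0u'}=\sum_w\mathfrak{b}_{w_0u'}\cap\mathfrak{b}_w$, obtained by conjugating the literal statement by $u'$), and transport by the Weyl element $w_0p$ (the paper's $q=w_0^{-1}p$) with the final reindexing by right translation. The only difference is cosmetic bookkeeping of the conjugation conventions, which you handle correctly.
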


\begin{proof} All Borel subalgebras are known to be conjugate, and it is enough to prove the identity in the theorem for ${\mathfrak b} = {\mathfrak b}_{g}=g^{-1}{\mathfrak b}_{0}g$ for all $g \in GL_{n}(k)$. By Lemma \ref{lem2} the element $g$ splits as a product of an upper triangular matrix, $u$, by a lower triangular matrix, $l$, by a permutation matrix, $p$,i.e.~we can write $g=ulp$. The matrix $l$ can be written as the conjugate $l=w_0 u_{2}w_0^{-1}$ of an upper triangular matrix $u_{2}$ by the permutation matrix $w_0$ associated with the permutation of maximal length in  ${\mathfrak S}_{n}$, that is \[w_{0}=(1,n)(2,n-1)\hdots(\lfloor{\frac{n}{2}}\rfloor,n - \lfloor\frac{n}{2}\rfloor +1).\] Substituting for $l$ in $g=ulp$ accordingly, and introducing the permutation matrix $q\coloneqq w_0^{-1}p$ we get $g=uw_0 u_{2}w_0^{-1}p=u w_0 u_{2} q$, and for the Borel subalgebra ${\mathfrak b}_{g}= {\mathfrak b}_{w_0 u_{2} q}$.

\noindent
Now, as observed above, conjugation trivially commutes with taking linear envelope and intersection so that $\sum_{w \in {\mathfrak S}_{n}} {\mathfrak b}_{w_0 u_{2}} \cap {\mathfrak b}_{w}$ coincides with $(\sum_{w \in {\mathfrak S}_{n}} {\mathfrak b}_{w_0} \cap {\mathfrak b}_{w u_{2}^{-1}})_{u_{2}}$ and the identity ${\mathfrak b}_{w_0 u_{2}}= \sum_{w \in {\mathfrak S}_{n}} {\mathfrak b}_{w_0 u_{2}} \cap {\mathfrak b}_{w} $ is equivalent to ${\mathfrak b}_{w_0} = \sum_{w \in {\mathfrak S}_{n}} {\mathfrak b}_{w_0} \cap {\mathfrak b}_{w u_{2}^{-1}}$, which, in turn, is precisely the conclusion of Lemma \ref{lem1}.

\noindent
Again, conjugation commutes with taking linear envelope and intersection, to the effect that the identity ${\mathfrak b}_{w_0 u_{2}}= \sum_{w \in {\mathfrak S}_{n}} {\mathfrak b}_{w_0 u_{2}} \cap {\mathfrak b}_{w} $ reads
\[
{\mathfrak b}_{g} = {\mathfrak b}_{w_0 u_{2} q}= (\sum_{w \in {\mathfrak S}_{n}} {\mathfrak b}_{w_0 u_{2}} \cap {\mathfrak b}_{w})_{q} = \sum_{w \in {\mathfrak S}_{n}} ({\mathfrak b}_{w_0 u_{2}} \cap {\mathfrak b}_{w})_{q} = \sum_{w \in {\mathfrak S}_{n}} {\mathfrak b}_{w_0 u_{2} q} \cap {\mathfrak b}_{w q} = \sum_{w \in {\mathfrak S}_{n}} {\mathfrak b}_{g} \cap {\mathfrak b}_{w q}.
\]
\noindent
Since in any group (right-) translation by any element is a bijection, the latter sum can be rewritten $\sum_{w \in {\mathfrak S}_{n}} {\mathfrak b}_{g} \cap {\mathfrak b}_{w}$, which reaches the claim that

\begin{equation*}
{\mathfrak b}_{g} = \sum_{w \in {\mathfrak S}_{n}}{\mathfrak b}_{g}\cap {\mathfrak b}_{w}. 
\end{equation*}

\noindent
and closes the proof of Aequatio Praeclara.
\end{proof}

One will note the remark closing the proof of Lemma {\ref{lem1}} that conjugations under only a small subset of ${\mathfrak S}_{n}$ are needed to recover ${\mathfrak b}_{w_{0}u}$ implies, when introduced in the previous discussion, that in general it is enough to consider the envelope of the intersections of the Borel subalgebra ${\mathfrak b}_{g}$ with the Borel subalgebras ${\mathfrak b}_{t q}$, $t \in {\mathfrak T}_{n}$, where $q = w_{0}^{-1}p$, for $p$ the permutation factor of the $ulp$ decomposition of $g$: the sum in the above decomposition can be taken on a prescribed translate of ${\mathfrak T}_{n}$, a small subset (of cardinal $(n^{2} - n +2)/2)$ of ${\mathfrak S}_{n}$.

\subsection{A surjectivity result}\label{tangentlocalmodel}

Let $k$ be a field and let $G\coloneqq\GL_{n,k}$. Let $B$ be a Borel subgroup in $G$ and let $\mathfrak{g}\coloneqq\Lie G$ and $\mathfrak{b}\coloneqq\Lie B$. The quotient scheme $G/B$ is identified to the projective scheme classifying the complete flag in $k^n$. As two complete flags in $k^n$ are conjugate under $G(k)$, we have a natural isomorphism of sets $(G/B)(k)\simeq G(k)/B(k)$, so that we will identify $k$-points of $G/B$ with right cosets $gB(k)$, for $g\in G(k)$. Let $\gtilde$ be the Grothendieck simultaneous resolution of $\mathfrak{g}$: it coincides with the closed subscheme $\{(A,gB)\mid \mathrm{Ad}(g^{-1})A\in \mathfrak{b}\}$ of the product $\mathfrak{g}\times_k G/B$. Let $\pi_1$ and $\pi_2$ be the projections of $\gtilde\times_{\mathfrak{g}}\gtilde$ onto $\gtilde$ with respect to the first and second factors.

\begin{lemm}\label{tangent1}
Let $g\in G(k)$. The tangent space of $\gtilde$ at the point $(0,gB(k))$ of $\mathfrak{g}\times G/B$ is the $k$-linear subspace $g\mathfrak{b}g^{-1}\oplus T_{gB(k)}G/B$ of $\mathfrak{g}\oplus T_{gB(k)}G/B$. 
\end{lemm}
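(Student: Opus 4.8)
The plan is to compute the tangent space of $\gtilde$ at $(0, gB(k))$ directly from its defining equations inside $\mathfrak{g} \times_k G/B$. Since conjugation by $g$ on $\mathfrak{g}$ and translation by $g$ on $G/B$ give an automorphism of the pair $(\mathfrak{g}, G/B)$ that carries $\gtilde$ to itself (sending $(A, hB)$ to $(gAg^{-1}, ghB)$) and sends $(0, B(k))$ to $(0, gB(k))$, it suffices to treat the case $g = 1$, i.e.\ to show $T_{(0,B(k))}\gtilde = \mathfrak{b} \oplus T_{B(k)}G/B$. The general case then follows by applying the differential of this automorphism, which sends $\mathfrak{b}$ to $g\mathfrak{b}g^{-1}$ and $T_{B(k)}G/B$ isomorphically to $T_{gB(k)}G/B$.

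First I would set up coordinates near $(0, B(k))$. Étale-locally on $G/B$ near the base point $B(k)$ we may use the big cell: the unipotent radical $N^-$ of the opposite Borel maps isomorphically (via $\nu \mapsto \nu B$) onto an open neighbourhood of $B(k)$, so $T_{B(k)}G/B \cong \mathfrak{n}^-$, the strictly lower triangular matrices, complementary to $\mathfrak{b}$ in $\mathfrak{g}$. In these coordinates $\gtilde$ is cut out near $(0, B(k))$ by the condition that $\mathrm{Ad}(\nu^{-1}) A$ lie in $\mathfrak{b}$, i.e.\ by the vanishing of the $\mathfrak{n}^-$-component of $\mathrm{Ad}(\nu^{-1})A = \nu^{-1} A \nu$. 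Writing $A = \epsilon A_1$, $\nu = 1 + \epsilon X$ with $\epsilon^2 = 0$, $A_1 \in \mathfrak{g}$, $X \in \mathfrak{n}^-$, one computes $\nu^{-1} A \nu = \epsilon A_1 + O(\epsilon^2) = \epsilon A_1$, so the linearized equation is simply that the $\mathfrak{n}^-$-component of $A_1$ vanishes, i.e.\ $A_1 \in \mathfrak{b}$; there is no constraint on $X$. Hence the tangent space is $\mathfrak{b} \oplus \mathfrak{n}^- = \mathfrak{b} \oplus T_{B(k)}G/B$ as claimed.

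The only subtlety — and the one point deserving care rather than being a genuine obstacle — is that the base point $A = 0$ is highly non-regular for the adjoint action, so one should not expect the expected-dimension/flatness heuristics to apply, and indeed the tangent space here has dimension $\dim\mathfrak{b} + \dim G/B = \binom{n+1}{2} + \binom{n}{2} = n^2 - \ldots$, strictly larger than $\dim \gtilde = \dim\mathfrak{g} = n^2$ only in the sense that $\gtilde$ is singular at such points; the linearization above is nonetheless rigorous because we work with the explicit defining ideal and take first-order terms honestly, so no appeal to smoothness is needed. Concretely, I would phrase the computation as: a tangent vector at $(0,gB(k))$ is a $k[\epsilon]$-point of $\gtilde$ reducing to $(0, gB(k))$, which unwinds (after the reduction to $g=1$ and the big-cell chart) to a pair $(\epsilon A_1, (1+\epsilon X)B)$ with $X \in \mathfrak{n}^-$ arbitrary and $\epsilon A_1 \in \mathrm{Ad}(1+\epsilon X)\mathfrak{b} = \mathfrak{b} + \epsilon[X,\mathfrak{b}]$, and since the $\epsilon$-part lies in $\epsilon\mathfrak{g}$ this forces exactly $A_1 \in \mathfrak{b}$. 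Assembling, $T_{(0,gB(k))}\gtilde = g\mathfrak{b}g^{-1} \oplus T_{gB(k)}G/B$.
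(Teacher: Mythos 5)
Your proof is correct and takes essentially the same route as the paper: the paper works directly in the chart $u\mapsto guB$ of $G/B$ at $gB(k)$ and linearizes the defining condition using $\varepsilon^2=0$, whereas you first reduce to $g=1$ by the conjugation automorphism and then perform the identical first-order computation in the big cell, which is only a cosmetic repackaging. One aside is inaccurate, though harmless to the argument: $\gtilde$ is a vector bundle of rank $\dim\mathfrak{b}$ over $G/B$, hence smooth everywhere, and your tangent space has dimension exactly $\dim\mathfrak{b}+\dim G/B=n^2=\dim\gtilde$, so there is no singularity at $(0,gB(k))$.
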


\begin{proof}
Let $B^-$ be the Borel subgroup of $G$, opposite to $B$ and let $U^-$ be the unipotent radical of $B^-$. There is an open embedding of $U^-$ into $G/B$ sending $u\in U^-$ to $guB$ (see for example \cite[II.1.10]{Jantzen}). This open embedding induces an isomorphism $T_{gB(k)} G/B\simeq T_{\Id} U^-$. This implies that the tangent space $T_{(0,gB(k))}\gtilde$ can be identified with the set of pairs $(A, C)$ in $\mathfrak{g}\times \Lie U^-$ such that $(\varepsilon A, g(\Id+\varepsilon C))\in\gtilde(L[\varepsilon])$, which means
\begin{equation}\label{tangentrelation}(g(\Id+\varepsilon B))^{-1}\varepsilon A g(\Id+\varepsilon C)\in k[\varepsilon]\otimes_k\ \mathfrak{b}.
\end{equation}
Using the fact that $\varepsilon^2=0$, \eqref{tangentrelation} is equivalent to $g^{-1} A g\in\mathfrak{b}$.
\end{proof}

The same kind of computation shows the following result.

\begin{lemm}\label{tangent2}
The tangent space of $\gtilde\times_{\mathfrak{g}}\gtilde$ at the point $(gB(k),0,hB(k))\in G/B\times\mathfrak{g}\times G/B$ is the subspace
\[ T_{gB(k)}G/B\oplus (g\mathfrak{b}g^{-1}\cap h\mathfrak{b}h^{-1})\oplus T_{hB(k)}G/B\]
of $T_{gB(k)}G/B\oplus\mathfrak{g}\oplus T_{hB(k)}G/B$.
\end{lemm}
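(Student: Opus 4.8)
\textbf{Proof proposal for Lemma \ref{tangent2}.}

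The plan is to mimic the computation of Lemma \ref{tangent1}, now carried out on the fibre product. A tangent vector of $\gtilde\times_{\mathfrak{g}}\gtilde$ at $(gB(k),0,hB(k))$ is, by definition of the fibre product, a pair of tangent vectors of $\gtilde$ — one at $(0,gB(k))$, one at $(0,hB(k))$ — whose images in $T_0\mathfrak{g}=\mathfrak{g}$ agree. So first I would apply Lemma \ref{tangent1} twice: the tangent space $T_{(0,gB(k))}\gtilde$ is $g\mathfrak{b}g^{-1}\oplus T_{gB(k)}G/B\subset\mathfrak{g}\oplus T_{gB(k)}G/B$, and similarly $T_{(0,hB(k))}\gtilde=h\mathfrak{b}h^{-1}\oplus T_{hB(k)}G/B$. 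The projection of each to $\mathfrak{g}$ is simply the first-factor projection (since the base point has $0$ in the $\mathfrak{g}$-slot, the differential of $\gtilde\to\mathfrak{g}$ at that point is the obvious projection). Hence the fibre product of tangent spaces is cut out inside $T_{(0,gB(k))}\gtilde\times_{\mathfrak g}T_{(0,hB(k))}\gtilde$ by the condition that the $\mathfrak{g}$-component $A$ lies in $g\mathfrak{b}g^{-1}$ \emph{and} in $h\mathfrak{b}h^{-1}$; the $G/B$-directions at $g$ and at $h$ remain free and unconstrained. This gives exactly $T_{gB(k)}G/B\oplus(g\mathfrak{b}g^{-1}\cap h\mathfrak{b}h^{-1})\oplus T_{hB(k)}G/B$.

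The one point that needs a word of justification is that $T(\gtilde\times_{\mathfrak g}\gtilde)$ really is the fibre product $T\gtilde\times_{T\mathfrak g}T\gtilde$ of tangent spaces at the given point — i.e. that tangent space formation commutes with this fibre product here. This is a formal fact once one knows the map $\gtilde\to\mathfrak g$ is flat, or more cheaply it follows by spelling out the functor of points on $k[\varepsilon]$: an element of $(\gtilde\times_{\mathfrak g}\gtilde)(k[\varepsilon])$ lifting the base point is precisely a pair of $k[\varepsilon]$-points of $\gtilde$ lifting $(0,gB(k))$ and $(0,hB(k))$ with the same image in $\mathfrak g(k[\varepsilon])$, which is the definition of the fibre product of the tangent spaces. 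Concretely, writing the $G/B$-directions via the open cells $guB$ ($u\in U^-$) and $hu'B$ ($u'\in (U')^-$) as in the proof of Lemma \ref{tangent1}, a tangent vector is a triple $(C,A,C')$ with $C\in\Lie U^-$, $C'\in\Lie (U')^-$, $A\in\mathfrak g$, subject to the two relations $(g(\Id+\varepsilon C))^{-1}(\varepsilon A)g(\Id+\varepsilon C)\in k[\varepsilon]\otimes_k\mathfrak b$ and $(h(\Id+\varepsilon C'))^{-1}(\varepsilon A)h(\Id+\varepsilon C')\in k[\varepsilon]\otimes_k\mathfrak b$, which by the $\varepsilon^2=0$ reduction collapse to $g^{-1}Ag\in\mathfrak b$ and $h^{-1}Ah\in\mathfrak b$, i.e. $A\in g\mathfrak b g^{-1}\cap h\mathfrak b h^{-1}$, with $C,C'$ free.

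I do not expect any real obstacle: the computation is genuinely "the same kind of computation" as Lemma \ref{tangent1}, applied separately in each of the two $\gtilde$-factors, with the shared $\mathfrak g$-direction producing the intersection $g\mathfrak b g^{-1}\cap h\mathfrak b h^{-1}$. The only thing to be careful about is bookkeeping: keeping the two $G/B$-directions (at $gB(k)$ and at $hB(k)$) genuinely independent and not accidentally identifying them, and making sure the decomposition of the ambient space $T_{gB(k)}G/B\oplus\mathfrak g\oplus T_{hB(k)}G/B$ is the one induced by the two projections $\pi_1,\pi_2$ together with either structure map to $\mathfrak g$.
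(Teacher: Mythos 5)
Your proposal is correct and is exactly the computation the paper intends: the authors only remark that ``the same kind of computation'' as in Lemma \ref{tangent1} proves Lemma \ref{tangent2}, and your argument — identifying a tangent vector with a pair of $k[\varepsilon]$-points of $\gtilde$ sharing the same image $\varepsilon A$ in $\mathfrak{g}$, then reducing each condition via $\varepsilon^2=0$ to $g^{-1}Ag\in\mathfrak{b}$ and $h^{-1}Ah\in\mathfrak{b}$ with the two $G/B$-directions free — is precisely that computation. The formal compatibility of tangent spaces with the fibre product needs no flatness, as your functor-of-points remark already shows, so nothing is missing.
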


Let $T$ be a maximal split torus in $G$ and let $\mathfrak{t}$ be its Lie algebra. The following result will prove essential later. Let us write $(G/B)^T$ for the set of $k$-points of $G/B$ which are fixed by the group $T(k)$. Note that this set is in bijection with the Weyl group $W$ of $(G,T)$.

\begin{theo}\label{intersectionBorels}
Let $hB(k)\in (G/B)(k)$. We have
\[ \sum_{gB(k)\in (G/B)^T}d\pi_2(T_{(gB(k),0,hB(k))}(\gtilde\times_{\mathfrak{g}}\gtilde))=T_{(0,hB(k))}\gtilde\]
\end{theo}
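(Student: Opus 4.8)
The plan is to feed the purely linear-algebraic identity of Theorem~\ref{enveloppeBorels} into the explicit tangent-space descriptions of Lemmas~\ref{tangent1} and~\ref{tangent2}, the only glue needed being the standard identification of the $T$-fixed points of $G/B$ with the Weyl group.

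First I would normalize the situation: conjugating $B$, $T$ and the point $hB(k)$ simultaneously by an element of $G(k)$, we may assume $B$ is the Borel of upper-triangular matrices, so that $\mathfrak{b}=\mathfrak{b}_0$, and $T$ is the diagonal torus. Then $(G/B)^T$ is exactly the set of coordinate flags $\{\,wB(k)\mid w\in W\,\}$, since a complete flag fixed by the diagonal torus is spanned by coordinate subspaces; this matches the asserted bijection with $W$, and for $g=w$ the subalgebra $g\mathfrak{b}g^{-1}$ is the $W$-conjugate $w\mathfrak{b}_0w^{-1}$, which ranges over all $W$-conjugates of $\mathfrak{b}_0$.

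Next I would compute the image of $d\pi_2$ on the relevant tangent space. By Lemma~\ref{tangent2}, for $g=w\in W$,
\[ T_{(wB(k),0,hB(k))}\bigl(\gtilde\times_{\mathfrak{g}}\gtilde\bigr)=T_{wB(k)}G/B\,\oplus\,\bigl(w\mathfrak{b}w^{-1}\cap h\mathfrak{b}h^{-1}\bigr)\,\oplus\,T_{hB(k)}G/B. \]
The morphism $\pi_2$ forgets the first $\gtilde$-factor, hence the first $G/B$-coordinate, so that under the identification of $T_{(0,hB(k))}\gtilde$ with $h\mathfrak{b}h^{-1}\oplus T_{hB(k)}G/B\subset\mathfrak{g}\oplus T_{hB(k)}G/B$ provided by Lemma~\ref{tangent1}, the differential $d\pi_2$ sends $(X,A,Y)\mapsto(A,Y)$. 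Therefore
\[ d\pi_2\bigl(T_{(wB(k),0,hB(k))}(\gtilde\times_{\mathfrak{g}}\gtilde)\bigr)=\bigl(w\mathfrak{b}w^{-1}\cap h\mathfrak{b}h^{-1}\bigr)\oplus T_{hB(k)}G/B. \]
Summing over $w\in W$ and using that the summand $T_{hB(k)}G/B$ is independent of $w$, the left-hand side of the theorem becomes
\[ \Bigl(\sum_{w\in W}w\mathfrak{b}_0w^{-1}\cap h\mathfrak{b}_0h^{-1}\Bigr)\oplus T_{hB(k)}G/B. \]
Now $w\mathfrak{b}_0w^{-1}=\mathfrak{b}_{w^{-1}}$ in the notation of Theorem~\ref{enveloppeBorels}, and $w\mapsto w^{-1}$ permutes $W$; applying Theorem~\ref{enveloppeBorels} to the Borel subalgebra $h\mathfrak{b}_0h^{-1}$ gives $\sum_{w\in W}(h\mathfrak{b}_0h^{-1})\cap\mathfrak{b}_w=h\mathfrak{b}_0h^{-1}$. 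Hence the sum equals $h\mathfrak{b}h^{-1}\oplus T_{hB(k)}G/B$, which is precisely $T_{(0,hB(k))}\gtilde$ by Lemma~\ref{tangent1}.

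The only point that requires genuine care — everything else being formal once Theorem~\ref{enveloppeBorels} is in hand — is the identification of $d\pi_2$ at the level of tangent vectors: one must verify that, with respect to the direct-sum decompositions of Lemmas~\ref{tangent1} and~\ref{tangent2}, the differential really is the projection that kills the first flag coordinate while preserving both the $\mathfrak{g}$-component and the second $G/B$-component. Secondarily, one should be careful not to confuse the two indexing conventions $w\mathfrak{b}_0w^{-1}$ and $\mathfrak{b}_w=w^{-1}\mathfrak{b}_0w$, and should record that $(G/B)^T$ is exactly $\{\,wB(k)\mid w\in W\,\}$.
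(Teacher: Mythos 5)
Your argument is correct and is essentially the paper's own proof: Lemmas \ref{tangent1} and \ref{tangent2} identify the two sides, the sum over $(G/B)^T$ becomes the sum of the intersections of $h\mathfrak{b}h^{-1}$ with the Borel subalgebras containing $\mathfrak{t}$, and Theorem \ref{enveloppeBorels} then gives the required identity, exactly as in the paper. The only point to repair is your opening normalization: since the paper does not assume $T\subset B$, you cannot in general conjugate $B$ to the upper-triangular Borel and $T$ to the diagonal torus by a \emph{single} element of $G(k)$; instead conjugate $T$ alone to the diagonal torus and observe that the Borel subalgebras stabilizing the $T$-fixed flags are then exactly the $w\mathfrak{b}_0w^{-1}$ (this uses only that $G/B$ is canonically the variety of complete flags, independently of the chosen base point $B$), or equivalently, as the paper does, let the sum run over all Borel subalgebras containing $\mathfrak{t}$ and apply Theorem \ref{enveloppeBorels} in its conjugation-invariant form.
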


\begin{proof}
Let us remark that $gB(k)\in (G/B)^T$ if and only if $T\subset gB(k)g^{-1}$ which is equivalent to $\mathfrak{t}\subset g\mathfrak{b}g^{-1}$. Let $\mathcal{B}$ be the set of Borel sub-algebras of $\mathfrak{g}$ containing $\mathfrak{t}$. Using Lemmas \ref{tangent1} and \ref{tangent2}, we see that the statement is equivalent to the following identity :
\begin{equation}\label{mainequation} \sum_{\mathfrak{b'}\in\mathcal{B}}(\mathfrak{b'}\cap h\mathfrak{b}h^{-1})=h\mathfrak{b}h^{-1}.\end{equation}
This, in turn, is a consequence of Theorem \ref{enveloppeBorels}.
\end{proof}

\section{Local deformation rings}

Let $\mathcal{C}$ be the category of finite local $L$-algebras $A$ with residue field isomorphic to $L$. If $A$ is an object of $\mathcal{C}$ we denote by $\mathfrak{m}_A$ its unique maximal ideal.

\subsection{$(\varphi,\Gamma_K)$-modules}

Let $K'$ be the maximal unramified extension of $\Qp$ contained in $K(\mu_{p^{\infty}})$, it is a finite extension of $\Qp$. Let $\mathcal{R}$ be the Robba ring of $K$ defined as $\varinjlim_{r<1}\mathcal{R}^{]r,1[}$ where $\mathcal{R}^{]r,1[}$ is the ring of rigid analytic functions on the open annulus $\{r<|X| <1\}$ over $K'$. This ring is a Bezout domain. 
If $A$ is a finite dimensional $\Qp$-algebra let $\mathcal{R}_A\coloneqq A\otimes_{\Qp}\mathcal{R}$. The ring $\mathcal{R}$ is endowed with a Frobenius endomorphism $\phi$ and a continuous action of the group $\Gamma_K\coloneqq \Gal(K(\zeta_{p^\infty})/K)$ commuting with $\phi$ (see \cite[Def.~2.2.2]{KPX}). The ring $\mathcal{R}$ contains an element $t$ which is the image in $\mathcal{R}$ of the rigid analytic function $x\mapsto\log(1+x)$ defined over the open unit disc over $\Qp$. This element has the properties $\phi(t)=pt$ and $[\rec_K(a)]\cdot t=\chi_{\cyc}(\rec_K(a))t=at$, for $a\in K^\times$.

When $A$ is an object of $\mathcal{C}$, we define a $(\varphi,\Gamma_K)$-module over $\mathcal{R}_A$ as a pair $(\mathcal{D}_A,\varphi)$ where $\mathcal{D}_A$ is a finite free $\mathcal{R}_A$-module, $\varphi$ is a $\phi$-semilinear endomorphism of $\mathcal{D}_A$ inducing an isomorphism $\mathcal{R}_A\otimes_{\mathcal{R}_A,\phi}\mathcal{D}_A\xrightarrow{\sim} \mathcal{D}_A$, and $\mathcal{D}_A$ is equipped with a continuous semilinear action of $\Gamma_K$ commuting with $\varphi$ (here $\mathcal{D}_A$ is a $\mathcal{R}$-module of finite type and has the canonical topology coming from the topology of $\mathcal{R}$). As $A$ is a finite local $\Qp$-algebra, this definition coincides with \cite[Def.~2.2.12]{KPX}.

If $\mathcal{D}_1$ and $\mathcal{D}_2$ are two $(\varphi,\Gamma_K)$-modules over $\mathcal{R}_A$. There is a $(\varphi,\Gamma_K)$-module $\Hom(\mathcal{D}_1,\mathcal{D}_2)$ defined over $\mathcal{R}_A$ whose underlying $\mathcal{R}_A$-module is the space of $\mathcal{R}_A$-linear maps from $\mathcal{D}_1$ to $\mathcal{D}_2$. Namely, $\mathcal{R}$ is a flat $\mathcal{R}$-module via $\phi$, so that the canonical map $\mathcal{R}\otimes_{\mathcal{R},\phi}\Hom_{\mathcal{R}}(M_1,M_2)\rightarrow\Hom_{\mathcal{R}}(\mathcal{R}\otimes_{\mathcal{R},\phi}M_1,\mathcal{R}\otimes_{\mathcal{R},\phi}M_2)$ is an isomorphism, which isomorphism is used to define $\varphi$ on $\Hom_{\mathcal{R}}(M_1,M_2)$.

For $i\geq 0$, the $i$-th cohomology group $H^i_{(\varphi,\Gamma_K)}(\mathcal{D})$ of a $(\varphi,\Gamma_K)$-module $\mathcal{D}$ is defined in \cite[3.1]{LiuCohomology}. If $\mathcal{D}_A$ is a $(\varphi,\Gamma_K)$-module over $\mathcal{R}_A$, it follows from \cite[Thm.~5.3]{LiuCohomology} that $H^i_{(\varphi,\Gamma)}(\mathcal{D}_A)$ is of finite type over $A$ and zero for $i> 2$.

For any continuous group homomorphism $\delta : K^\times\rightarrow L^\times$, we recall that we can construct a rank one $(\varphi,\Gamma_K)$-module $\mathcal{R}_L(\delta)$ over $\mathcal{R}_L$ such that the map $\delta\mapsto \mathcal{R}_L(\delta)$ induces a bijection between the set of continuous group homomorphisms $K^\times\rightarrow L^\times$ and the set of isomorphism classes of rank one $(\varphi,\Gamma_K)$-modules over $\mathcal{R}_L$ (see \cite[\S6.1]{KPX} for the precise construction of $\mathcal{R}_L(\delta)$).

By definition a $(\varphi,\Gamma_K)$-module over $\mathcal{R}[\tfrac{1}{t}]$ is a finite free $\mathcal{R}[\tfrac{1}{t}]$-module $\mathcal{M}$ with a $\phi$-semilinear endomorphism $\varphi$ and a semilinear action of $\Gamma_K$ such that there exists a sub-$\mathcal{R}$-module $\mathcal{D}$ of $\mathcal{M}$ which is stable by $\varphi$ and $\Gamma_K$, generates $\mathcal{M}$ as a $\mathcal{R}[\tfrac{1}{t}]$-module and is a $(\varphi,\Gamma_K)$-module over $\mathcal{R}$.

\begin{lemm}\label{stableparsousobjet}
Let $\mathcal{M}$ be $(\varphi,\Gamma_K)$-module over $\mathcal{R}[\tfrac{1}{t}]$ and let $\mathcal{N}$ a sub-$\mathcal{R}[\tfrac{1}{t}]$-module of $\mathcal{M}$ which a direct factor as $\mathcal{R}[\tfrac{1}{t}]$-module and stable under $\varphi$ and $\Gamma_K$. Then $\mathcal{N}$ is a $(\varphi,\Gamma_K)$-module over $\mathcal{R}[\tfrac{1}{t}]$.
\end{lemm}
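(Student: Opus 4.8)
The plan is to exhibit a $\varphi$- and $\Gamma_K$-stable $\mathcal{R}$-lattice inside $\mathcal{N}$ which is a genuine $(\varphi,\Gamma_K)$-module over $\mathcal{R}$; the natural candidate is $\mathcal{D}\cap\mathcal{N}$ for a lattice $\mathcal{D}$ of $\mathcal{M}$, and the point is to check it has the required properties using that $\mathcal{R}$ is a Bezout domain.

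First I would invoke the definition of a $(\varphi,\Gamma_K)$-module over $\mathcal{R}[\tfrac{1}{t}]$: there is a sub-$\mathcal{R}$-module $\mathcal{D}\subseteq\mathcal{M}$, stable under $\varphi$ and $\Gamma_K$, which is a $(\varphi,\Gamma_K)$-module over $\mathcal{R}$ and satisfies $\mathcal{D}[\tfrac{1}{t}]=\mathcal{M}$. Set $\mathcal{D}_{\mathcal{N}}\coloneqq\mathcal{D}\cap\mathcal{N}$. It is stable under $\varphi$ and $\Gamma_K$ since both $\mathcal{D}$ and $\mathcal{N}$ are, and clearing denominators shows $\mathcal{D}_{\mathcal{N}}[\tfrac{1}{t}]=\mathcal{N}$: any $x\in\mathcal{N}\subseteq\mathcal{M}=\mathcal{D}[\tfrac{1}{t}]$ satisfies $t^k x\in\mathcal{D}$ for some $k\geq 0$, and $t^k x\in\mathcal{N}$ as well, so $t^k x\in\mathcal{D}_{\mathcal{N}}$. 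It then remains to see that $\mathcal{D}_{\mathcal{N}}$ is finite free over $\mathcal{R}$ and that $\varphi$ induces an isomorphism $\mathcal{R}\otimes_{\mathcal{R},\phi}\mathcal{D}_{\mathcal{N}}\xrightarrow{\sim}\mathcal{D}_{\mathcal{N}}$. Granting this, $\mathcal{D}_{\mathcal{N}}$ is a $(\varphi,\Gamma_K)$-module over $\mathcal{R}$ — the continuity of the $\Gamma_K$-action being inherited as the restriction of a continuous action to a direct-summand (hence closed) submodule — and $\mathcal{N}=\mathcal{D}_{\mathcal{N}}[\tfrac{1}{t}]$ is then finite free over $\mathcal{R}[\tfrac{1}{t}]$ and generated over $\mathcal{R}[\tfrac{1}{t}]$ by the $(\varphi,\Gamma_K)$-module $\mathcal{D}_{\mathcal{N}}$, i.e. $\mathcal{N}$ is a $(\varphi,\Gamma_K)$-module over $\mathcal{R}[\tfrac{1}{t}]$.

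For freeness: since $\mathcal{N}$ is a direct factor of $\mathcal{M}$, the quotient $\mathcal{M}/\mathcal{N}$ is projective, hence torsion-free over $\mathcal{R}[\tfrac{1}{t}]$ and a fortiori over $\mathcal{R}$; therefore $\mathcal{D}/\mathcal{D}_{\mathcal{N}}$, which embeds into $\mathcal{M}/\mathcal{N}$, is a finitely generated torsion-free $\mathcal{R}$-module, hence free, because $\mathcal{R}$ is a Bezout domain. Consequently the surjection $\mathcal{D}\to\mathcal{D}/\mathcal{D}_{\mathcal{N}}$ splits, $\mathcal{D}_{\mathcal{N}}$ is a direct summand of $\mathcal{D}$, so $\mathcal{D}_{\mathcal{N}}$ is finitely generated and free over $\mathcal{R}$ (and $\mathcal{N}=\mathcal{D}_{\mathcal{N}}[\tfrac{1}{t}]$ is finite free over $\mathcal{R}[\tfrac{1}{t}]$). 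Now pick an $\mathcal{R}$-basis of $\mathcal{D}$ adapted to a decomposition $\mathcal{D}=\mathcal{D}_{\mathcal{N}}\oplus E$. In this basis the matrix of $\varphi$ on $\mathcal{D}$ is invertible over $\mathcal{R}$ (because $\mathcal{D}$ is a $(\varphi,\Gamma_K)$-module over $\mathcal{R}$), and it is block upper-triangular because $\mathcal{D}_{\mathcal{N}}$ is $\varphi$-stable, its upper-left block being the matrix of $\varphi|_{\mathcal{D}_{\mathcal{N}}}$. An invertible block-triangular matrix over a commutative ring has invertible diagonal blocks, so $\varphi|_{\mathcal{D}_{\mathcal{N}}}$ has invertible matrix over $\mathcal{R}$, i.e. $\mathcal{R}\otimes_{\mathcal{R},\phi}\mathcal{D}_{\mathcal{N}}\xrightarrow{\sim}\mathcal{D}_{\mathcal{N}}$, as wanted.

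The only genuinely structural input — and the step I expect to be the main obstacle — is the reduction ``finitely generated sub-$\mathcal{R}$-module, then direct summand, then free'': it rests on the fact that a finitely generated torsion-free module over the Bezout domain $\mathcal{R}$ is free, which is exactly what makes the relevant short exact sequence split and lets one put $\mathcal{D}_{\mathcal{N}}$ in a coordinate position where the invertibility of $\varphi$ on the lattice becomes visible. Everything else is formal bookkeeping.
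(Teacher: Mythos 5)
Your proof is correct and starts from the same reduction as the paper: take the lattice $\mathcal{D}\subset\mathcal{M}$ provided by the definition and show that $\mathcal{D}'\coloneqq\mathcal{D}\cap\mathcal{N}$ is a $(\varphi,\Gamma_K)$-module over $\mathcal{R}$ (with $\mathcal{D}'[\tfrac{1}{t}]=\mathcal{N}$, a point the paper leaves implicit and you spell out). The execution of the two key steps, however, differs. For freeness the paper simply invokes \cite[Lemme~4.13]{Bergereqdiff}, while you argue directly that $\mathcal{D}/\mathcal{D}'$ is finitely generated and torsion-free (it embeds in $\mathcal{M}/\mathcal{N}$, a direct summand of a free $\mathcal{R}[\tfrac{1}{t}]$-module), hence free over the Bezout domain $\mathcal{R}$, so that $0\to\mathcal{D}'\to\mathcal{D}\to\mathcal{D}/\mathcal{D}'\to0$ splits and $\mathcal{D}'$ is a direct summand of $\mathcal{D}$, in particular finite free; this is a slightly stronger intermediate statement than the paper records, and it pays off in the Frobenius step: in a basis adapted to $\mathcal{D}=\mathcal{D}'\oplus E$ the matrix of $\varphi$ is invertible and block triangular, so its diagonal block is invertible by the determinant formula, which is exactly the assertion that $\mathcal{R}\otimes_{\mathcal{R},\phi}\mathcal{D}'\to\mathcal{D}'$ is an isomorphism. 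The paper reaches the same conclusion without splitting anything: it applies the snake lemma to the exact sequence and its $\phi$-pullback, so that bijectivity of $\varphi_{\mathcal{D}}$ forces surjectivity of $\varphi_{\mathcal{D}/\mathcal{D}'}$, whose injectivity is then checked via torsion-freeness and a rank comparison over the Bezout ring, whence $\varphi_{\mathcal{D}'}$ is bijective. Your route is more self-contained and elementary (no external lemma, and the splitting does double duty for freeness and the $\varphi$-step), while the paper's is shorter on the page because the freeness input is outsourced to Berger; both arguments are complete, and your side remarks on continuity of the $\Gamma_K$-action and on finite freeness of $\mathcal{N}$ over $\mathcal{R}[\tfrac{1}{t}]$ address points the paper also passes over silently.
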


\begin{proof}
Let $\mathcal{D}$ be a sub-$\mathcal{R}$-module of $\mathcal{M}$, which is a $(\varphi,\Gamma_K)$-module and generates $\mathcal{M}$ as a $\mathcal{R}[\tfrac{1}{t}]$-module. It is sufficient to prove that $\mathcal{D}'\coloneqq \mathcal{D}\cap\mathcal{N}$ is a $(\varphi,\Gamma_K)$-module over $\mathcal{R}$. It follows from \cite[Lemme~4.13]{Bergereqdiff} that $\mathcal{D}'$ is a finite free $\mathcal{R}$-module. Consequently we have to prove that the $\mathcal{R}$-linear map $\mathcal{R}\otimes_{\mathcal{R},\phi}\mathcal{D}'\rightarrow\mathcal{D}'$ given by the restriction of $\varphi$ is an isomorphism. 
The snake lemma applied to the following morphism of short exact sequences
\[ \begin{tikzcd}0\ar[r] & \mathcal{R}\otimes_{\mathcal{R},\phi}\mathcal{D}'\ar[r]\ar[d, "\varphi_{\mathcal{D}'}"] & \mathcal{R}\otimes_{\mathcal{R},\phi}\mathcal{D}\ar[r]\ar[d,"\varphi_{\mathcal{D}}"]& \mathcal{R}\otimes_{\mathcal{R},\phi}(\mathcal{D}/\mathcal{D}')\ar[r]\ar[d,"\varphi_{\mathcal{D}/\mathcal{D}'}"] & 0 \\
0 \ar[r] & \mathcal{D}'\ar[r] & \mathcal{D}\ar[r]& \mathcal{D}/\mathcal{D}'\ar[r] & 0
\end{tikzcd} ,\]
where $\varphi_{\mathcal{D}}$ is bijective by the very definition of a $(\varphi,\Gamma_K)$-module, shows that the map $\varphi_{\mathcal{D}/\mathcal{D}'}$ is surjective and that it is enough to check it is also injective to get that $\varphi_{\mathcal{D}'}$ is an isomorphism.
Since $\mathcal{D}/\mathcal{D}'$ is $\mathcal{R}$-torsion free, its kernel is torsion free and a comparison of the ranks then shows that $\varphi_{\mathcal{D}/\mathcal{D}'}$ is injective as the ring $\mathcal{R}$ is Bezout.
\end{proof}

If $A$ is an object of $\mathcal{R}_A$, we define a $(\varphi,\Gamma_K)$-module over $\mathcal{R}_A[\tfrac{1}{t}]$ as being a $(\varphi,\Gamma_K)$-module $\mathcal{M}$ over $\mathcal{R}[\tfrac{1}{t}]$ together with a morphism of $\Qp$-algebras from $A$ into $\End_{\varphi,\Gamma_K}\mathcal{M}$ such that $\mathcal{M}$ is a finite free $\mathcal{R}_A[\tfrac{1}{t}]$-module.

\subsection{Filtered deformation functors}

Let $A$ be an object of $\mathcal{C}$ and let $\mathcal{D}_A$ be a $(\varphi,\Gamma_K)$-module over $\mathcal{R}_A$. We define a filtration $\mathcal{F}$ of $\mathcal{D}_A$ as a sequence
\[ 0=\mathcal{F}_0\subset\mathcal{F}_1\subset\cdots\subset\mathcal{F}_m=\mathcal{D}_A\]
of sub-$(\varphi,\Gamma_K)$-modules of $\mathcal{D}_A$ such that each $\mathcal{F}_i$ is a direct factor of $\mathcal{D}_A$ as an $\mathcal{R}_A$-module. When $m=\rk_{\mathcal{R}_A} \mathcal{D}_A$ and each quotient $\mathcal{F}_i/\mathcal{F}_{i-1}$ is of rank $1$ over $\mathcal{R}_A$, we say that $\mathcal{F}$ is a \emph{triangulation} of $\mathcal{D}_A$.

Let $\mathcal{F}$ be a triangulation of a $(\varphi,\Gamma_K)$-module $\mathcal{D}$ over $\mathcal{R}_L$. For $1\leq i\leq d$, let $\delta_i$ the unique continuous morphism $K^\times\rightarrow L^\times$ such that $\mathcal{F}_i/\mathcal{F}_{i-1}\simeq\mathcal{R}_L(\delta_i)$. The character $\delta_1\otimes\cdots\otimes\delta_d$ from $(K^\times)^d$ to $L^\times$ depends only on $\mathcal{D}$ and $\mathcal{F}$ and is called the \emph{parameter} of the triangulation $\mathcal{F}$.

If $\mathbf{k}=(k_{\tau})_{\tau\in\Sigma}\in\Z^{\Sigma}$, we note $z^{\mathbf{k}}$ the character $z\mapsto \prod_{\tau\in\Sigma}\tau(z)^{k_{\tau}}$ from $K^{\times}$ into $L^{\times}$. A character $\delta_1\otimes\cdots\otimes\delta_d$ of $(K^\times)^d$ is called \emph{regular} if, for all $i\neq j$, we have
\begin{equation}\label{regchar}
 \delta_i\delta_j^{-1}\notin\{ z^{\mathbf{k}}, z^{\mathbf{k}}\varepsilon_K ; \, \mathbf{k}\in\Z^{[K:\Qp]}\}.
 \end{equation}

From now we fix $\mathcal{D}$ a $(\varphi,\Gamma_K)$-module over $\mathcal{R}_L$ and $\mathcal{F}$ a filtration of $\mathcal{D}$. If $A$ is an object of $\mathcal{C}$, we define $\mathfrak{X}_{\mathcal{D},\mathcal{F}}(A)$ as the set of isomorphism classes of triples $(\mathcal{D}_A,\pi,\mathcal{F}_A)$ where $\mathcal{D}_A$ is a $(\varphi,\Gamma_K)$-module over $\mathcal{R}_A$, $\pi$ is an $\mathcal{R}_A$-linear map from $\mathcal{D}_A$ to $\mathcal{D}$ commuting to $\varphi$ and $\Gamma_K$, inducing an isomorphism $\mathcal{D}_A\otimes_AL\xrightarrow{\sim} \mathcal{D}$, and $\mathcal{F}_A=(\mathcal{F}_{A,i})_{0\leq i\leq m}$ is a filtration of $\mathcal{D}_A$ such that $\pi(\mathcal{F}_{A,i})=\mathcal{F}_i$ for all $0\leq i\leq m$. This construction can be promoted naturally into a functor from $\mathcal{C}$ to the category of sets. In the case $K=\Qp$, the functor $\mathfrak{X}_{\mathcal{D},\mathcal{F}}$ was defined by Chenevier in \cite{Chefougere}. Below we will make reference to the statements \cite[Prop.~3.4]{Chefougere} and \cite[Prop.~3.6 (i) and (iii)]{Chefougere} which concern only the case $K=\Qp$. However their statement and proof extend verbatim to the general case where $K$ is a finite extension of $\Qp$ so that we will apply them without more explanation to our situation. It follows from \cite[Prop.~3.4]{Chefougere} that the functor $\mathfrak{X}_{\mathcal{D},\mathcal{F}}$ admits a versal deformation $L$-algebra, i.e.~a complete noetherian local $L$-algebra $R$ such that
\[ \Hom_{\mathrm{pro}-\mathcal{C}}(R,-)\simeq\mathfrak{X}_{\mathcal{D},\mathcal{F}}.\]
When $\mathcal{F}=(0\subset \mathcal{D})$, we simply write $\mathfrak{X}_\mathcal{D}$ for the functor $\mathfrak{X}_{\mathcal{D},\mathcal{F}}$, which then coincides with the deformation functor of $\mathcal{D}$. There is a natural map of functors $\mathfrak{X}_{\mathcal{D},\mathcal{F}}\rightarrow\mathfrak{X}_\mathcal{D}$ which is defined by $(\mathcal{D}_A,\pi,\mathcal{F}_A)\mapsto (\mathcal{D}_A,\pi)$. If we assume in addition that $\Hom_{(\varphi,\Gamma_K)}(\gr_i(\mathcal{D}),\mathcal{D}/\mathcal{F}_i)=0$ for all $i$, then \cite[Prop.~3.6.(i)]{Chefougere} shows that the map of functors $\mathfrak{X}_{\mathcal{D},\mathcal{F}}\rightarrow\mathfrak{X}_\mathcal{D}$ is injective and therefore we can identify $\mathfrak{X}_{\mathcal{D},\mathcal{F}}$ with a subfunctor of $\mathfrak{X}_\mathcal{D}$. 
In the particular case that $\mathcal{F}$ is a triangulation, the functor $\mathfrak{X}_{\mathcal{D},\mathcal{F}}$ was introduced in \cite[Def.~2.3.2]{BelChe}; then the map of functors $\mathfrak{X}_{\mathcal{D},\mathcal{F}}\rightarrow\mathfrak{X}_{\mathcal{D}}$ is relatively representable if we assume  $\Hom_{(\varphi,\Gamma_K)}(\gr_i(\mathcal{D}),\mathcal{D}/\mathcal{F}_i)=0$ for all $i$ (see \cite[Prop.~2.3.9]{BelChe}), an assumption that is satisfied if $\mathcal{F}$ is a triangulation of $\mathcal{D}$ with regular parameter as it follows from \cite[Prop.~3.10.(1)]{LiuCohomology}.
If $\mathcal{F}$ is a filtration of $\mathcal{D}$, let $\End_{\mathcal{F}} \mathcal{D}$ be the sub-$\mathcal{R}_L$-module of $\End \mathcal{D}$ whose elements are $\mathcal{R}_L$-linear maps respecting $\mathcal{F}$. It is a sub-$(\varphi,\Gamma_K)$-module of $\End \mathcal{D}$. It follows from \cite[Prop.~3.6.(iii)]{Chefougere} that if $H^2_{(\varphi,\Gamma_K)}(\End_{\mathcal{F}}\mathcal{D})=0$, the functor $\mathfrak{X}_{\mathcal{D},\mathcal{F}}$ is formally smooth. In particular if $H^2_{(\varphi,\Gamma_K)}(\End \mathcal{D})=0$, the functor $\mathfrak{X}_\mathcal{D}$ is formally smooth, which implies that a versal deformation ring for $\mathfrak{X}_\mathcal{D}$ is a formally smooth complete noetherian local $L$-algebra with residual field isomorphic to $L$, i.e.~of the form $L\dbl X_1,\dots, X_d\dbr$ for some non-negative integer $d$.

For the purpose of this paper we need an other kind of deformation problem. Let $A$ be an object of $\mathcal{C}$ and $\mathcal{D}_A$ a $(\varphi,\Gamma_K)$-module over $\mathcal{R}_A$. The properties of the element $t\in\mathcal{R}$ show that the endomorphism $\phi$ and the action of $\Gamma_K$ extends canonically to the ring $\mathcal{R}_A[\tfrac{1}{t}]$ and, if $\mathcal{D}$ is a $(\varphi,\Gamma_K)$-module over $\mathcal{R}_A$, there are canonical semilinear extensions of $\varphi$ and of the action of $\Gamma_K$ to $\mathcal{D}[\tfrac{1}{t}]$. A filtration of $\mathcal{D}_A[\frac{1}{t}]$ is a sequence
\[ \mathcal{M}=\left(0\subset\mathcal{M}_1\subset\cdots\mathcal{M}_m=\mathcal{D}_A\left[\tfrac{1}{t}\right]\right)\]
by sub-$\mathcal{R}_A[\tfrac{1}{t}]$-modules which are direct factors and are stable under $\varphi$ and $\Gamma_K$. 
\begin{rema}
If $\mathcal{F}=(\mathcal{F}_i)_{0\leq i\leq m}$ is a filtration of $\mathcal{D}_A$, the family $\mathcal{F}[\tfrac{1}{t}]\coloneqq (\mathcal{F}_i[\tfrac{1}{t}])$ is a filtration of $\mathcal{D}_A[\tfrac{1}{t}]$. However, if $(\mathcal{M}_i)_{0\leq i\leq m}$ is a filtration of $\mathcal{D}_A[\tfrac{1}{t}]$, the family $(\mathcal{M}_i\cap \mathcal{D}_A)_{0\leq i\leq m}$ need not be a filtration of $\mathcal{D}_A$ since the $\mathcal{R}_A$-modules $\mathcal{M}_i\cap \mathcal{D}_A$ may fail to be projective. 
\end{rema}
A family of the form $(\mathcal{M}_i\cap \mathcal{D}_A)_{0\leq i\leq m}$ , for a given filtration $\mathcal{M}_i$ of $\mathcal{D}_A[\tfrac{1}{t}]$, is what we call loosely an unsaturated filtration of $\mathcal{D}_A$. When $A=L$, the fact that $\mathcal{R}$ is a Bezout domain implies that the family $\mathcal{M}\cap\mathcal{D}_A\coloneqq (\mathcal{M}_i\cap \mathcal{D}_A)_{0\leq i\leq m}$ is actually a filtration of $\mathcal{D}_A$ and the map $\mathcal{M}\mapsto\mathcal{M}\cap \mathcal{D}_A$ is a bijection from the set of filtrations of $\mathcal{D}_A[\tfrac{1}{t}]$ onto the set of filtrations of $\mathcal{D}_A$ whose inverse is $\mathcal{F}\mapsto\mathcal{F}[\tfrac{1}{t}]$.

Let $\mathcal{D}$ be a $(\varphi,\Gamma_K)$-module over $\mathcal{R}_L$ and let $\mathcal{M}$ be a filtration of $\mathcal{D}[\tfrac{1}{t}]$. If $A$ is an object of $\mathcal{C}$, we define $\mathfrak{X}_{\mathcal{D},\mathcal{M}}(A)$ as the set of isomorphism classes of triples $(\mathcal{D}_A,\pi,\mathcal{M}_A)$ where $\mathcal{D}_A$ is a $(\varphi,\Gamma_K)$-module over $\mathcal{R}_A$, $\pi_A$ is a $(\varphi,\Gamma_K)$-module morphism $\mathcal{D}_A\rightarrow \mathcal{D}$ inducing an isomorphism $L\otimes_A\mathcal{D}_A\xrightarrow{\sim} \mathcal{D}$ and $\mathcal{M}_A$ is a filtration of $\mathcal{D}_A[\tfrac{1}{t}]$ such that $\pi(\mathcal{M}_{A,i})=\mathcal{M}_i$. The construction $A\mapsto\mathfrak{X}_{\mathcal{D},\mathcal{M}}(A)$ can be promoted into a functor from $\mathcal{C}$ to the category of sets. When $\mathcal{F}\coloneqq \mathcal{M}\cap \mathcal{D}$ we can check that the map $(\mathcal{D}_A,\pi_A,\mathcal{F}_A)\mapsto (\mathcal{D}_A,\pi_A,\mathcal{F}_A[\tfrac{1}{t}])$ induces an injection of functors $\mathfrak{X}_{\mathcal{D},\mathcal{F}}\hookrightarrow\mathfrak{X}_{\mathcal{D},\mathcal{M}}$ and we use it to identify $\mathfrak{X}_{\mathcal{D},\mathcal{F}}$ with a subfunctor of $\mathfrak{X}_{\mathcal{D},\mathcal{M}}$.

\begin{rema}
When $\mathcal{M}$ is a triangulation of $\mathcal{D}$, the functor $\mathfrak{X}_{\mathcal{D},\mathcal{M}}$ coincides with the functor of isomorphism classes of the groupoid $X_{\mathcal{D},\mathcal{M}}$ introduced in \cite[\S~3.5]{BHS3}.
\end{rema}

The following statement is a direct consequence of the definitions; we state it for the sake of completeness and comfort of reading.

\begin{scol}
Let $\pi:\,\mathfrak{X}\rightarrow\mathfrak{Y}$ be a relatively representable morphism between functors from $\mathcal{C}$ to the category of sets. If $\mathfrak{Y}$ admits a versal deformation $L$-algebra, then $\mathfrak{X}$ admits a versal deformation $L$-algebra. More precisely if $\Spf R\rightarrow\mathfrak{Y}$ is a hull for $\mathfrak{Y}$ then the functor $\Spf R\times_{\mathfrak{Y}}\mathfrak{X}$ is pro-representable by a local complete noetherian $L$-algebra $S$ and $\Spf S\rightarrow\mathfrak{X}$ is a hull for $\mathfrak{X}$.
\end{scol}

We will also need the following fact which is a direct consequence of \cite[Prop.~3.4.6]{BHS3}.

\begin{prop}\label{genericfiltrations}
Let $\mathcal{D}$ be a $(\varphi,\Gamma_K)$-module and let $\mathcal{F}$ be a triangulation of $\mathcal{D}$ whose parameter is regular in the sense of $(\ref{regchar})$. Let $\mathcal{M}\coloneqq \mathcal{F}[\tfrac{1}{t}]$. Then the forgetful map $\mathfrak{X}_{\mathcal{D},\mathcal{M}}\rightarrow\mathfrak{X}_{\mathcal{D}}$ is injective and relatively representable. This implies that $\mathfrak{X}_{\mathcal{D},\mathcal{M}}$ admits a versal deformation $L$-algebra.
\end{prop}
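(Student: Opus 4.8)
The plan is to deduce the statement from the already-established properties of the functor $\mathfrak{X}_{\mathcal{D},\mathcal{F}}$, by showing that the natural injection of functors $\mathfrak{X}_{\mathcal{D},\mathcal{F}}\hookrightarrow\mathfrak{X}_{\mathcal{D},\mathcal{M}}$ recalled above is in fact an isomorphism. Since $\mathcal{M}=\mathcal{F}[\tfrac{1}{t}]$ and $\mathcal{R}$ is a Bezout domain, one has $\mathcal{M}\cap\mathcal{D}=\mathcal{F}$, so this injection is the map $(\mathcal{D}_A,\pi_A,\mathcal{F}_A)\mapsto(\mathcal{D}_A,\pi_A,\mathcal{F}_A[\tfrac{1}{t}])$. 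Its injectivity is immediate: each $\mathcal{F}_{A,i}$ is a direct summand of $\mathcal{D}_A$ as an $\mathcal{R}_A$-module, hence $\mathcal{F}_{A,i}=\mathcal{F}_{A,i}[\tfrac{1}{t}]\cap\mathcal{D}_A$ and $\mathcal{F}_A$ is recovered from $\mathcal{F}_A[\tfrac{1}{t}]$. For surjectivity I would start from $(\mathcal{D}_A,\pi_A)\in\mathfrak{X}_{\mathcal{D}}(A)$ and a filtration $\mathcal{M}_A$ of $\mathcal{D}_A[\tfrac{1}{t}]$ with $\pi_A(\mathcal{M}_{A,i})=\mathcal{M}_i$, form the family $\mathcal{F}_{A,i}\coloneqq\mathcal{M}_{A,i}\cap\mathcal{D}_A$ — a priori merely an unsaturated filtration of $\mathcal{D}_A$ — and argue that the regularity of the parameter of $\mathcal{F}$ forces $\mathcal{F}_A$ to be a genuine triangulation of $\mathcal{D}_A$ with $\mathcal{F}_{A,i}[\tfrac{1}{t}]=\mathcal{M}_{A,i}$ and $\pi_A(\mathcal{F}_{A,i})=\mathcal{F}_i$. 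This is exactly the content of \cite[Prop.~3.4.6]{BHS3}: regularity prevents the rank-one graded pieces of $\mathcal{M}$ from jumping in rank under deformation and keeps each $\mathcal{F}_{A,i}$ a direct summand of $\mathcal{D}_A$, so that forming $\mathcal{M}_{A,i}\cap\mathcal{D}_A$ is compatible with reduction modulo $\mathfrak{m}_A$.

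Granting this isomorphism, the forgetful map $\mathfrak{X}_{\mathcal{D},\mathcal{M}}\to\mathfrak{X}_{\mathcal{D}}$ is identified with $\mathfrak{X}_{\mathcal{D},\mathcal{F}}\to\mathfrak{X}_{\mathcal{D}}$, and I would then invoke the facts recorded in the discussion of filtered deformation functors above. Since $\mathcal{F}$ is a triangulation with regular parameter, $\Hom_{(\varphi,\Gamma_K)}(\gr_i(\mathcal{D}),\mathcal{D}/\mathcal{F}_i)=0$ for all $i$ by \cite[Prop.~3.10.(1)]{LiuCohomology}; hence by \cite[Prop.~2.3.9]{BelChe} the map $\mathfrak{X}_{\mathcal{D},\mathcal{F}}\to\mathfrak{X}_{\mathcal{D}}$ is relatively representable, and by \cite[Prop.~3.6.(i)]{Chefougere} it is injective. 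It follows that $\mathfrak{X}_{\mathcal{D},\mathcal{M}}\to\mathfrak{X}_{\mathcal{D}}$ is injective and relatively representable. Finally, $\mathfrak{X}_{\mathcal{D}}$ admits a versal deformation $L$-algebra by \cite[Prop.~3.4]{Chefougere}, so the Scholium above produces a versal deformation $L$-algebra for $\mathfrak{X}_{\mathcal{D},\mathcal{M}}$.

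The main obstacle is the surjectivity step, namely the assertion that a filtration of the generic fibre $\mathcal{D}_A[\tfrac{1}{t}]$ can always be re-saturated to a filtration of $\mathcal{D}_A$. Over $L$ this is nothing but the Bezout property of $\mathcal{R}$ recalled in the text; over a general Artinian $A$, however, the intersection $\mathcal{M}_{A,i}\cap\mathcal{D}_A$ may fail to be projective — precisely the pathology flagged in the Remark preceding the statement — and it is the regularity of the parameter that rules it out. For this delicate point I would simply cite \cite[Prop.~3.4.6]{BHS3}, as the statement asserts.
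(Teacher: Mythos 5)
There is a genuine gap, and it is located exactly where you place the weight of the argument: the claim that the injection $\mathfrak{X}_{\mathcal{D},\mathcal{F}}\hookrightarrow\mathfrak{X}_{\mathcal{D},\mathcal{M}}$ is an isomorphism is false in general, and regularity of the parameter does not repair it. Regularity rules out nonzero maps between the rank-one pieces (hence gives uniqueness statements), but it does not force the intersections $\mathcal{M}_{A,i}\cap\mathcal{D}_A$ to be direct summands of $\mathcal{D}_A$, nor does it force a lift $\mathcal{M}_A$ of $\mathcal{M}$ to come from a genuine triangulation $\mathcal{F}_A$ lifting $\mathcal{F}$. Concretely: when $\mathcal{D}$ is $\varphi$-generic crystalline (so every parameter is regular) and the triangulation $\mathcal{F}$ is \emph{critical}, the paper shows $\mathfrak{X}_{\mathcal{D}}^{\cris}\subset\mathfrak{X}_{\mathcal{D},\mathcal{F}[\frac{1}{t}]}$ while $\mathfrak{X}_{\mathcal{D}}^{\cris}\not\subset\mathfrak{X}_{\mathcal{D},\mathcal{F}}$ (this is precisely the Remark following the construction of $\mathcal{M}_A$ from crystalline deformations), so the inclusion $\mathfrak{X}_{\mathcal{D},\mathcal{F}}\subset\mathfrak{X}_{\mathcal{D},\mathcal{M}}$ is strict. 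The same conclusion follows from Section \ref{reminderqtriangulinecomponents}: $R_{\rho,\mathcal{M}}$ has irreducible components indexed by $\{w\geq w_{\mathcal{F}}\}$, hence is reducible whenever $w_{\mathcal{F}}\neq w_0$, whereas $\mathfrak{X}_{\mathcal{D},\mathcal{F}}$ is formally smooth in this setting; and the text states explicitly that $\mathfrak{X}_{\mathcal{D},\mathcal{F}[\frac{1}{t}]}$ is \emph{not} formally smooth in general. If your isomorphism held, Theorem \ref{mainlocalthm} would collapse to the known non-critical case of Chenevier and Nakamura, which is exactly what this paper is designed to go beyond. Your appeal to \cite[Prop.~3.4.6]{BHS3} for "re-saturation" is therefore a misattribution: that proposition does not say that filtrations of $\mathcal{D}_A[\tfrac{1}{t}]$ saturate compatibly with reduction.

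What \cite[Prop.~3.4.6]{BHS3} actually provides — and this is how the paper proves the statement, essentially by direct citation — is the assertion to be proved itself, applied to the map $\mathfrak{X}_{\mathcal{D},\mathcal{M}}\rightarrow\mathfrak{X}_{\mathcal{D}}$: under regularity \eqref{regchar}, a lift $\mathcal{M}_A$ of $\mathcal{M}$ to a filtration of $\mathcal{D}_A[\tfrac{1}{t}]$ is unique when it exists (injectivity, coming from the vanishing of $\Hom$ between the rank-one subquotients after inverting $t$, which is where regularity enters), and the locus of existence is cut out by a quotient $A_x$ of $A$ (relative representability). Your last step is fine: granting injectivity and relative representability over $\mathfrak{X}_{\mathcal{D}}$, which admits a versal deformation $L$-algebra by \cite[Prop.~3.4]{Chefougere}, the Scholium yields a versal deformation $L$-algebra for $\mathfrak{X}_{\mathcal{D},\mathcal{M}}$. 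But the route through an identification $\mathfrak{X}_{\mathcal{D},\mathcal{F}}\simeq\mathfrak{X}_{\mathcal{D},\mathcal{M}}$ and the properties of $\mathfrak{X}_{\mathcal{D},\mathcal{F}}$ from \cite{BelChe} and \cite{Chefougere} cannot work, because the two functors genuinely differ whenever $\mathcal{F}$ is critical.
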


This means that the map $\mathfrak{X}_{\mathcal{D},\mathcal{M}}\rightarrow\mathfrak{X}_{\mathcal{D}}$ is injective and that for all objects $A$ in $\mathcal{C}$ and all $x\in\mathfrak{X}_{\mathcal{D}}(A)$, there is a unique quotient $A_x$ of $A$ such that for any map $A\rightarrow B$ in $\mathcal{C}$, the image of $x$ in $\mathfrak{X}_{\mathcal{D}}(B)$ is in $\mathfrak{X}_{\mathcal{D},\mathcal{M}}(B)$ if and only if the map $A\rightarrow B$ factors through $A_x$.

\subsection{Crystalline $(\varphi,\Gamma_K)$-modules}\label{crystalline}

Let $K_0=W(k_K)[\tfrac{1}{p}]$, let $\sigma$ the absolute Frobenius automorphism of $K_0$ and $f=[k_K:\Fp]=[K_0:\Qp]$. If $A$ is an object of $\mathcal{C}$, an \emph{isocrystal} over $k_K$ with coefficients in $A$ is a pair $(V,\varphi)$ where $V$ is a finite projective $A\otimes_{\Qp}K_0$-module and $\varphi$ is an $\Id_A\otimes\sigma$-semilinear automorphism of $V$. Actually these conditions automatically imply that $V$ is a finite free $A\otimes_{\Qp}K_0$-module. Its rank is by definition its rank as a $A\otimes_{\Qp}K_0$-module. If $(V,\varphi)$ is an isocrystal over $k_K$ with coefficients in $A$, we define $\chi(V,\varphi)$ as the characteristic polynomial of the $A\otimes_{\Qp}K_0$-linear endomorphism $\varphi^f$. This polynomial is invariant under $\Id_A\otimes\sigma$, to the effect that $\chi(V,\varphi)$ lies in $A[X]$. Assume now that $A=L$. If $\chi(V,\varphi)=PQ$ where $P$ and $Q$ are coprime elements in $L[X]$, then there exists a unique $\varphi$-stable $L\otimes_{\Qp}K_0$-submodule $W\subset V$ such that $\chi(W,\varphi|_W)=P$. Actually we have explicitely $W=\ker P(\varphi^f)$.

Recall that there exists a left exact functor $D_{\cris}$ from the category of $(\varphi,\Gamma_K)$-modules over $\mathcal{R}_L$ to the category of isocrystals over $k_K$ with coefficients in $L$. It is defined by $D_{\cris}(\mathcal{D})\coloneqq \mathcal{D}[\tfrac{1}{t}]^{\Gamma_K}$ (see \cite[2.2.7.]{BelChe} for the case where $K=\Qp$). We say that a $(\varphi,\Gamma_K)$-module $\mathcal{D}$ over $\mathcal{R}_L$ is \emph{crystalline} if $\rk_{K_0} D_{\cris}(\mathcal{D})=\rk_{\mathcal{R}}\mathcal{D}$. Let $\mathcal{D}$ be a crystalline $(\varphi,\Gamma_K)$-module over $\mathcal{R}_L$. Arguing as in \cite[2.4.2.]{BelChe}, there exists a bijection between sub-$(\varphi,\Gamma_K)$-modules of $\mathcal{D}$ which are direct summands as $\mathcal{R}_L$-module and $\varphi$-stable sub-$L\otimes_{\Qp}K_0$-modules of $D_{\cris}(\mathcal{D})$.

A \emph{refinement} of a rank $d$ isocrystal $(D,\varphi)$ over $k$ is a filtration $F=(F_i)_{0\leq i\leq d}$ of $D$
\[ F_0=0\subsetneq F_1 \subsetneq \cdots \subsetneq F_d=D\]
such that each $F_i$ is a $L\otimes_{\Qp}K_0$-submodule stable under $\varphi$. Note that each $F_i$ is necessarily free over $L\otimes_{\Qp}K_0$ and consequently of rank $i$. 

If $\mathcal{D}$ is a crystalline $(\varphi,\Gamma_K)$-module over $\mathcal{R}_L$, there is consequently a bijection $\mathcal{F}=(\mathcal{F}_i)_{0\leq i\leq d}\mapsto D_{\cris}(\mathcal{F})\coloneqq (D_{\cris}(\mathcal{F}_i))_{0\leq i\leq d}$ between the set of triangulations of $\mathcal{D}$ and the set of refinements of $D_{\cris}(\mathcal{D})$. A refinement $F$ of a $k$-isocrystal $(D,\varphi)$ gives rise to a decomposition of $\chi(V,\varphi)$ as a product of polynomials of degree one
\[ \chi(D,\varphi)=\prod_{i=1}^n\chi(F_i/F_{i-1},\varphi). \]
In particular $\chi(D,\varphi)$ is split over $L$ and the triangulation defines an ordering $(\phi_1,\dots,\phi_d)$ of the roots of $\chi(D,\varphi)$ such that $\chi(F_i,\varphi)=\prod_{j=1}^i(X-\phi_j)$. We define $\delta_{F}$ to be the unramified character $T(K)\rightarrow L^{\times}$ given by the formula
\[ (a_1,\dots, a_d)\longmapsto \prod_{i=1}^d \phi_i^{v_K(a_i)} .\]
If $(D,\varphi)=D_{\cris}(\mathcal{D})$ for a crystalline $(\varphi,\Gamma_K)$-module $\mathcal{D}$ over $\mathcal{R}_L$ and if $\mathcal{F}$ is a triangulation of $\mathcal{D}$, we define $\delta_{\mathcal{F}}\coloneqq \delta_{D_{\cris}(\mathcal{F})}$. It follows from the classification of sub-$(\varphi,\Gamma_K)$-modules of rank one $(\varphi,\Gamma_K)$-modules (\cite[Prop.~6.2.8.(1)]{KPX}) that the parameter of the triangulation $\mathcal{F}$ is the product of $\delta_{\mathcal{F}}$ with an algebraic character of $(K^\times)^d$.

Conversely if the polynomial $\chi(D,\varphi)$ is separable and split in $L[X]$, each ordering $(\phi_1,\dots,\phi_d)$ of its roots comes from a unique refinement of $D$. In this case, the character $\delta_{F}$ completely determines the refinement $F$.

We say that a crystalline $(\varphi,\Gamma_K)$-module $\mathcal{D}$ over $\mathcal{R}_L$ is \emph{$\varphi$-generic} if the polynomial $\chi(D_{\cris}(\mathcal{D}))$ is separable split over $L$ with pairwise distinct roots $(\phi_1,\dots,\phi_d)$ such that $\phi_i\phi_j^{-1}\neq p^f$ for $i\neq j$. This property in particular implies that for each triangulation $\mathcal{F}$ of $\mathcal{D}$, the parameter of $\mathcal{F}$ is regular so that the assumption on the triangulation $\mathcal F$ in Proposition \ref{genericfiltrations} is satisfied. As a consequence we have the following relatively representable inclusions
\[ \mathfrak{X}_{\mathcal{D},\mathcal{F}}\subset\mathfrak{X}_{\mathcal{D},\mathcal{F}[\frac{1}{t}]}\subset\mathfrak{X}_{\mathcal{D}},\]
where both $\mathfrak{X}_{\mathcal{D}}$ and $\mathfrak{X}_{\mathcal{D},\mathcal{F}}$ are formally smooth, unlike the functor $\mathfrak{X}_{\mathcal{D},\mathcal{F}[\frac{1}{t}]}$ which does not share the property in full generality.

Let $\mathcal{D}$ be a crystalline $(\varphi,\Gamma_K)$-module over $\mathcal{R}_L$. For an object $A$ of $\mathcal{C}$, let $\mathfrak{X}_{\mathcal{D}}^{\cris}(A)$ the subset of $\mathfrak{X}_{\mathcal{D}}(A)$ of isomorphism classes of pairs $(\mathcal{D}_A,\pi_A)$ with $\mathcal{D}_A$ a crystalline $(\varphi,\Gamma_K)$-module. The subfunctor $A\mapsto \mathfrak{X}_{\mathcal{D}}^{\cris}(A)$ of $\mathfrak{X}_{\mathcal{D}}$ is simply denoted $\mathfrak{X}_{\mathcal{D}}^{\cris}$. If $\mathcal{D}_A$ is crystalline, the $A\otimes_{\Qp}K_0$-module $D_{\cris}(\mathcal{D}_A)$ is finite free of rank $\rk_{\mathcal{R}_L}\mathcal{D}$. Assume moreover that $\mathcal{D}$ is $\varphi$-generic crystalline. Let $\mathcal{F}$ be a triangulation of $\mathcal{D}$ with associated refinement $F=D_{\cris}(\mathcal{F})$ and set $\mathcal{M}=\mathcal{F}[\tfrac{1}{t}]$.

\begin{lemm}\label{filtrationcristalline}
Let $A$ be an object of $\mathcal{C}$ and let $(\mathcal{D}_A,\pi_A)\in\mathfrak{X}_{\mathcal{D}}^{\cris}(A)$. There exists a unique complete flag $F_A$ of $A\otimes_{\Qp}K_0$-submodules of $D_{\cris}(\mathcal{D}_A)$ which is stable under $\varphi$ and reduces to $F$ modulo $\mathfrak{m}_A$.
\end{lemm}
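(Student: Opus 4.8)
The plan is to combine the $\varphi$-genericity hypothesis on $\mathcal{D}$ with the structure theory of isocrystals. First I would analyze the characteristic polynomial: since $\mathcal{D}$ is $\varphi$-generic crystalline, $\chi(D_{\cris}(\mathcal{D}))\in L[X]$ is separable, split, with pairwise distinct roots $\phi_1,\dots,\phi_d$. As $(\mathcal{D}_A,\pi_A)\in\mathfrak{X}_{\mathcal{D}}^{\cris}(A)$, the module $D_{\cris}(\mathcal{D}_A)$ is finite free of rank $d$ over $A\otimes_{\Qp}K_0$, and it reduces modulo $\mathfrak{m}_A$ to $D_{\cris}(\mathcal{D})$ since $D_{\cris}$ is left exact and rank is preserved. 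The characteristic polynomial $\chi(D_{\cris}(\mathcal{D}_A),\varphi)\in A[X]$ is a lift of $\chi(D_{\cris}(\mathcal{D}),\varphi)$. Because the latter has pairwise distinct roots in $L$ and $A$ is a finite local $L$-algebra, I would invoke Hensel's lemma (for the complete local ring $A$) to factor $\chi(D_{\cris}(\mathcal{D}_A),\varphi)=\prod_{i=1}^d(X-\widetilde\phi_i)$ with $\widetilde\phi_i\in A$ lifting $\phi_i$, and the $\widetilde\phi_i$ remain pairwise ``coprime'' in the sense that $\widetilde\phi_i-\widetilde\phi_j\in A^\times$ for $i\neq j$.

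Next I would construct the flag via the isotypic decomposition. The endomorphism $\varphi^f$ is $A\otimes_{\Qp}K_0$-linear on $D_{\cris}(\mathcal{D}_A)$; using the lifted factorization and the fact that the factors $(X-\widetilde\phi_i)$ are pairwise coprime, the Chinese Remainder Theorem / primary decomposition gives a canonical $\varphi$-stable decomposition
\[ D_{\cris}(\mathcal{D}_A)=\bigoplus_{i=1}^d W_i,\qquad W_i\coloneqq \ker\bigl((\varphi^f-\widetilde\phi_i)\mid D_{\cris}(\mathcal{D}_A)\bigr),\]
with each $W_i$ a free rank-one $A\otimes_{\Qp}K_0$-module reducing to $\ker(\varphi^f-\phi_i)\subset D_{\cris}(\mathcal{D})$ mod $\mathfrak{m}_A$. (Here $\varphi$ permutes nothing: each $W_i$ is $\varphi$-stable because $\varphi$ commutes with $\varphi^f$, and $\varphi(\ker(\varphi^f-\widetilde\phi_i))\subseteq\ker(\varphi^f-\widetilde\phi_i)$ since $\widetilde\phi_i$ is fixed by $\Id_A\otimes\sigma$.) Then I set
\[ F_{A,i}\coloneqq W_{\iota(1)}\oplus\cdots\oplus W_{\iota(i)},\]
where $\iota$ is the ordering of $\{1,\dots,d\}$ determined by the refinement $F=D_{\cris}(\mathcal{F})$, i.e.\ such that $F_i=\ker(\varphi^f-\phi_{\iota(1)})\oplus\cdots\oplus\ker(\varphi^f-\phi_{\iota(i)})$. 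This $F_A=(F_{A,i})$ is a complete flag of $\varphi$-stable free $A\otimes_{\Qp}K_0$-submodules reducing to $F$ modulo $\mathfrak{m}_A$.

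For uniqueness I would argue that any $\varphi$-stable $A\otimes_{\Qp}K_0$-submodule $N\subseteq D_{\cris}(\mathcal{D}_A)$ is automatically a direct sum of a subcollection of the $W_i$: being $\varphi$-stable it is $\varphi^f$-stable, hence stable under the idempotents $e_i$ cutting out the $W_i$ (these idempotents lie in the subring of $A\otimes_{\Qp}K_0$-linear endomorphisms generated by $\varphi^f$, by the coprime factorization), so $N=\bigoplus_i e_iN$ with $e_iN\subseteq W_i$; since $W_i$ has rank one over the local ring $A\otimes_{\Qp}K_0$ (which is a product of local rings), $e_iN$ is either $0$ or all of $W_i$ — here one uses that $N$ is required to be, as part of a complete flag, a direct summand, hence projective, so $e_iN$ is a projective submodule of the rank-one free module $W_i$, forcing $e_iN\in\{0,W_i\}$. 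Thus the flag-step $F_{A,i}$, being a $\varphi$-stable rank-$i$ direct summand reducing to $F_i$, must equal the span of exactly those $W_j$ with $j\in\{\iota(1),\dots,\iota(i)\}$, by matching the reduction mod $\mathfrak{m}_A$ and Nakayama. The main obstacle I anticipate is the bookkeeping around $A\otimes_{\Qp}K_0$ not being local but a product of $f$ local factors indexed by embeddings $k_K\hookrightarrow L$: one must check that the Hensel factorization and the idempotent decomposition are compatible with this product structure and with the semilinearity of $\varphi$ (which permutes the $f$ factors cyclically), so that $\varphi^f$ is genuinely linear on each factor and the polynomial $\chi$, which lives in $A[X]$ precisely because it is $\Id_A\otimes\sigma$-invariant, governs all factors simultaneously. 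Everything else is a routine consequence of $\varphi$-genericity together with the results already established, in particular the bijection between $\varphi$-stable direct summands of $D_{\cris}$ and sub-$(\varphi,\Gamma_K)$-modules recalled above.
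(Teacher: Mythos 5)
Your proposal is correct and follows essentially the same route as the paper: both Hensel-lift the separable split characteristic polynomial of $\varphi^f$ to $A[X]$ and cut out the flag by kernels of the lifted factors, your partial sums $W_{\iota(1)}\oplus\cdots\oplus W_{\iota(i)}$ being exactly the paper's $\ker\prod_{j\leq i}(\varphi^f-\tilde{x}_j)$. The only difference is cosmetic: you phrase uniqueness via the CRT idempotents and projectivity of the flag steps, whereas the paper argues via the characteristic polynomials of $\varphi^f|_{F_{A,i}}$, both resting on the same uniqueness of the Hensel factorization.
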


\begin{proof}
By assumption, the polynomial $\chi(D_{\cris}(\mathcal{D}))$ is separable split in $L[X]$ so that we can write
\[ \chi(D_{\cris}(\mathcal{D}))=\prod_{i=1}^n(X-x_i)\]
and assume that the filtration $F$ is given by $F_i=\ker\prod_{j=1}^i(\varphi^f-x_j)$.

 Let $\chi_A(D_{\cris}(\mathcal{D}_A))\in A[X]$ be the characteristic polynomial of the $A\otimes_{\Qp}K_0$-linear endomorphism $\varphi^f$ of $D_{\cris}(\mathcal{D}_A)$. The reduction modulo $\mathfrak{m}_A$ of $\chi_A(D_{\cris}(\mathcal{D}_A))$ is the polynomial $\chi(D_{\cris}(\mathcal{D}))\in L[X]$ which is separable split in $L[X]$. Thus there exists a unique $(\tilde{x}_1,\dots,\tilde{x}_n)\in A^n$ such that $\chi_A(D_{\cris}(\mathcal{D}_A))=\prod_{i=1}^n(X-\tilde{x}_i)$ and, for $1\leq i\leq n$, $\tilde{x_i}\equiv x_i\mod \mathfrak{m}_A$. Considering the characteristic polynomials of the $\varphi^f|_{F_{A,i}}$ we can check that $F_{A,i}=\ker\prod_{j=1}^i(\varphi^f-\tilde{x}_j)$ defines the desired filtration. On the other hand any complete flag with the desired properties must fulfill this condition.
 \end{proof}

Let
\[ \mathcal{M}_A\coloneqq \left(\mathcal{R}_A\left[\tfrac{1}{t}\right]\otimes_{K_0\otimes_{\Qp}A} F_{A,1}\subsetneq\cdots\subsetneq \mathcal{R}_A\left[\tfrac{1}{t}\right]\otimes_{K_0\otimes_{\Qp}A} F_{A,n}=\mathcal{D}_A\left[\tfrac{1}{t}\right] \right) \]
where we used the canonical isomorphism (\cite[Thm.~0.2]{Bergereqdiff})
\[ \mathcal{R}_A\left[\tfrac{1}{t}\right]\otimes_{K_0\otimes_{\Qp}A} D_{\cris}(\mathcal{D}_A)\simeq\mathcal{D}_A\left[\tfrac{1}{t}\right] \]
and $F_A$ is the filtration whose existence is proved in Lemma \ref{filtrationcristalline}. Then $(\mathcal{D}_A,\pi_A,\mathcal{M}_A)$ is an element of $\mathfrak{X}_{\mathcal{D},\mathcal{M}}(A)$. This implies that we have a sequence of inclusions
\[ \mathfrak{X}_{\mathcal{D}}^{\cris}\subset\mathfrak{X}_{\mathcal{D},\mathcal{F}[\frac{1}{t}]}\subset\mathfrak{X}_{\mathcal{D}}.\]
\begin{rema}
We point out that in general $\mathfrak{X}_{\mathcal{D}}^{\cris}$ does not embed into $\mathfrak{X}_{\mathcal{D},\mathcal{F}}$. This is only true if we impose some conditions on the relative position of $\mathcal{F}$ with respect to the Hodge filtration (see below). 
\end{rema}

\subsection{$\BdR^+$-representations}\label{Bdr-repns}

 If $\mathcal{D}$ is a $(\varphi,\Gamma_K)$-module over $\mathcal{R}$, we note $W_{\dR}^+(\mathcal{D})$ the $\BdR^+$-representation of $\mathcal{G}_K$ constructed in \cite[Prop.~2.2.6.]{BergerBpaires}. The rank of the $\BdR^+$-module $W_{\dR}^+(\mathcal{D})$ is equal to the rank of the $\mathcal{R}$-module $\mathcal{D}$. We obtain an exact functor $W_{\dR}^+$ from the category of $(\varphi,\Gamma_K)$-modules over $\mathcal{R}$ to the category of $\BdR^+$-representations of $\mathcal{G}_K$.

If $A$ is an object of $\mathcal{C}$, an $A\otimes_{\Qp}\BdR^+$-representation of $\mathcal{G}_K$ is a finite free $A\otimes_{\Qp}\BdR^+$-module with a continuous semilinear action of $\mathcal{G}_K$. If $\mathcal{D}_A$ is a $(\varphi,\Gamma_K)$-module over $\mathcal{R}_A$, the $L\otimes_{\Qp}\BdR^+$-representation $W_{\dR}^+(\mathcal{D}_A)$ is actually an $A\otimes_{\Qp}\BdR^+$-module with a continuous semilinear action of $\mathcal{G}_K$. It follows from \cite[Lemma~3.3.5.(i)]{BHS3} that $W_{\dR}^+(\mathcal{D}_A)$ is a finite free $A\otimes_{\Qp}\BdR^+$-module and consequently an $A\otimes_{\Qp}\BdR^+$-representation of $\mathcal{G}_K$. If $W$ is an $L\otimes_{\Qp}\BdR^+$-representation of $\mathcal{G}_K$ we define $\mathfrak{X}_W$ the functor from $\mathcal{C}$ to the category of sets such that $\mathfrak{X}_W(A)$ is the set of equivalence classes of pairs $(W_A,\pi_A)$ where $W_A$ is a $A\otimes_{\Qp}\BdR^+$-representation of $\mathcal{G}_K$ and $\pi_A$ is an $A\otimes_{\Qp}\BdR^+$-linear and $\mathcal{G}_K$-equivariant morphism from $W_A$ to $W$ inducing an isomorphism $L\otimes_AW_A\xrightarrow{\sim} W$. If $\mathcal{D}$ is a $(\varphi,\Gamma_K)$-module over $\mathcal{R}_L$, the functor $W_{\dR}^+$ induces a map from $\mathfrak{X}_{\mathcal{D}}$ to $\mathfrak{X}_{W_{\dR}^+(\mathcal{D})}$.

Let $\mathcal{D}$ be a $(\varphi,\Gamma_K)$-module over $\mathcal{R}$. Let $D_{\dR}(\mathcal{D})\coloneqq (W_{\dR}^ +(\mathcal{D})\otimes_{\BdR^+}\BdR)^{\mathcal{G}_K}$. The \emph{de Rham filtration} on $D_{\dR}(\mathcal{F})$ is defined by
\[ \Fil^i_{\dR}(D_{\dR}(\mathcal{D}))\coloneqq (t^i W_{\dR}^+(\mathcal{D}))^{\mathcal{G}_K}\subset D_{\dR}(\mathcal{D}) \]
We say that $\mathcal{D}$ is \emph{de Rham} if $\dim_K D_{\dR}(\mathcal{D})=\rk_{\mathcal{R}}\mathcal{D}$. If $A$ is an object of $\mathcal{C}$ and if $\mathcal{D}_A$ is a $(\varphi,\Gamma_K)$-module over $\mathcal{R}_A$, then $D_{\dR}(\mathcal{D}_A)$ is a $A\otimes_{\Qp}K$-module. If we assume that $\mathcal{D}_A$ is de Rham, then it is finite free over $A\otimes_{\Qp}K$, and each $\Fil^i_{\dR}D_{\dR}(\mathcal{D})$ is a sub-$A\otimes_{\Qp}K$-module.

A \emph{filtered} $L\otimes_{\Qp}K$-module is a finite free $L\otimes_{\Qp}K$-module with a separated and exhaustive filtration by sub-$L\otimes_{\Qp}K$-modules. The functor $D_{\dR}$ is a left exact functor from the category of $(\varphi,\Gamma_K)$-modules over $L$ to the category of filtered $L\otimes_{\Qp}K$-modules. The restriction of the functor $D_{\dR}$ to the subcategory of de Rham $(\varphi,\Gamma_K)$-modules is exact and a crystalline $(\varphi,\Gamma_K)$-module over $\mathcal{R}_L$ is de Rham. Moreover there is a canonical isomorphism of $L\otimes_{\Qp}K$-modules $D_{\cris}(\mathcal{D})\otimes_{K_0}K\simeq D_{\dR}(\mathcal{D})$ (see for example \cite[Prop.~2.3.3]{BergerBpaires}).

Let $\mathcal{D}$ be a crystalline $(\varphi,\Gamma_K)$-module over $\mathcal{R}_L$. For all $\tau\in\Sigma$, we define
\[ D_{\dR,\tau}(\mathcal{D})\coloneqq L\otimes_{K\otimes_{\Qp}L,\tau}D_{\dR}(\mathcal{D})\]
It is a direct factor of $D_{\dR}(\mathcal{D})$ and we define a separated and exhaustive filtration on $D_{\dR,\tau}(\mathcal{D})$ by
\[ \Fil_{\dR,\tau}^iD_{\dR,\tau}(\mathcal{D})\coloneqq L\otimes_{K\otimes_{\Qp}L,\tau}(\Fil_{\dR}^iD_{\dR}(\mathcal{D}))\]
A \emph{Hodge-Tate type} is an element $\mathbf{k}=(\mathbf{k}_{\tau})_{\tau\in\Sigma}\in(\Z^n)^{[K:\Qp]}$ where each $\mathbf{k}_{\tau}$ is an increasing sequence of integers. We say that the Hodge-Tate type is \emph{regular} if all these sequences of integers are \emph{strictly} increasing. If $\mathcal{D}$ is a de Rham $(\varphi,\Gamma_K)$-module, \emph{its Hodge-Tate type} is by definition $(k_{1,\tau}\leq\cdots \leq k_{n,\tau})_{\tau\in\Sigma}$ where the $k_{i,\tau}$ are the integers $m$ such that $\gr^{-m}D_{\dR,\tau}(\mathcal{D})\neq0$, counted with multiplicity, where the multiplicity of $m$ is defined as the dimension  $\dim_L\gr^{-m}D_{\dR,\tau}(\mathcal{D})$.

Let $\mathcal{D}$ be a crystalline $(\varphi,\Gamma_K)$-module over $\mathcal{R}_L$ and let $\mathcal{F}$ be a triangulation of $\mathcal{D}$. We say that $\mathcal{F}$ is \emph{non critical}, if for all $1\leq i\leq \rk_{\mathcal{R}_L}\mathcal{D}$ and for all  $\tau\in\Sigma$, there exists some $m\in\Z$ such that
\[ (L\otimes_{K_0\otimes_{\Qp}L,\tau|_{K_0}}D_{\cris}(\mathcal{F}_i))\oplus\Fil_{\dR,\tau}^m=D_{\dR,\tau}(\mathcal{D}).\]
In this case we obviously have $m+i=\rk\, \mathcal{D}$.
With the help of the functor $W_{\dR}^+$, we can easily construct an exact functor $W_{\dR}$ from the category of $(\varphi,\Gamma_K)$-modules over $\mathcal{R}[\tfrac{1}{t}]$ to the category of $\BdR$-representations of $\mathcal{G}_K$ such that, for $\mathcal{D}$ a $(\varphi,\Gamma_K)$-module over $\mathcal{R}$, we have
\[ W_{\dR}\left(\mathcal{D}\left[\tfrac{1}{t}\right]\right)=W_{\dR}^+(\mathcal{D})\otimes_{\BdR^+}\BdR. \]

If $A$ is an object of $\mathcal{C}$, the image by $W_{\dR}$ of a $(\varphi,\Gamma_K)$-module over $\mathcal{R}_A[\tfrac{1}{t}]$ is a finite free as $A\otimes_{\Qp}\BdR$-module and consequently an $A\otimes_{\Qp}\BdR$-representation of $\mathcal{G}_K$ (see \cite[Lemma~3.3.5.(ii)]{BHS3}).

If $A$ is an object of $\mathcal{C}$ and $W_A$ is an $A\otimes_{\Qp}\BdR$-representation of $\mathcal{G}_K$ of rank $n$, we define a \emph{complete flag} of $W$ to be a filtration $(F_i)_{0\leq i\leq n}$ of $W$ by sub-$A\otimes_{\Qp}\BdR$-modules stable under $\mathcal{G}_K$ such that $F_i$ is a free $A\otimes_{\Qp}\BdR$-module of rank $i$.

 Let $W$ be a $L\otimes_{\Qp}\BdR^+$-representation of $\mathcal{G}_K$ and let $F$ be a complete flag of $W\otimes_{\BdR^+}\BdR$ stable under the action of $\mathcal{G}_K$. Let $A$ be an object of the category $\mathcal{C}$. A \emph{deformation of the pair} $(W,F)$ over $A$ is an element $(W_A,\pi_A,F_A)$ where $W_A$ is a $A\otimes_{\Qp}\BdR^+$-representation of $\mathcal{G}_K$, $\pi_A$ a $\mathcal{G}_K$-equivariant isomorphism from $W_A\otimes_AL$ to $W$ and $F_A$ a complete flag of $W_A\otimes_{\BdR^+}\BdR$ such that $F=(\pi_A\otimes\Id_{\BdR})(F_A)$. 
 We denote by $\mathfrak{X}_{W,F}$ the functor from the category $\mathcal{C}$ to the category of sets, that maps an object $A$ of $\mathcal{C}$ to the isomorphism class of deformations of $(W,F)$.

Let $\mathcal{D}_A$ be a $(\varphi,\Gamma_K)$-module over $\mathcal{R}_A$ and $\mathcal{M}_A$ a triangulation of $\mathcal{D}_A[\tfrac{1}{t}]$. It follows from Lemma \ref{stableparsousobjet} that each $\mathcal{M}_{A,i}$ is a $(\varphi,\Gamma_K)$-module over $\mathcal{R}_A[\tfrac{1}{t}]$. Thus
\[ W_{\dR}(\mathcal{M}_A)\coloneqq \left(W_{\dR}(\mathcal{M}_{A,0})\subset\cdots\subset W_{\dR}(\mathcal{M}_{A,n})=W_{\dR}\left(\mathcal{D}\left[\tfrac{1}{t}\right]\right)\right)\]
is a complete flag of $W_{\dR}^+(\mathcal{D}_A)\otimes_{\BdR^+}\BdR$. For $\mathcal{D}$ a $(\varphi,\Gamma_K)$-module over $\mathcal{R}_L$ and $\mathcal{F}$ a triangulation of $\mathcal{D}$, we deduce from this fact that the functor $W_{\dR}^+$ extends to a map of functors
\begin{equation}\label{foncteurWdR} \mathfrak{X}_{\mathcal{D},\mathcal{F}[\frac{1}{t}]}\longrightarrow\mathfrak{X}_{W_{\dR}^+(\mathcal{D}),W_{\dR}(\mathcal{F}[\frac{1}{t}])}\end{equation}
The following proposition is essentially \cite[Cor.~3.5.6.]{BHS3}.

\begin{prop}\label{smoothBHS}
If $\mathcal{D}$ is a $\varphi$-generic crystalline $(\varphi,\Gamma_K)$-module over $\mathcal{R}_L$ with regular Hodge-Tate type and $\mathcal{F}$ is a triangulation of $\mathcal{D}$, then the map \eqref{foncteurWdR} is formally smooth.
\end{prop}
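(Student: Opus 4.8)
The plan is to deduce the proposition from \cite[Cor.~3.5.6.]{BHS3}, to which our situation is adapted; the real work lies in matching the hypotheses and in recognizing \eqref{foncteurWdR} among the morphisms studied there.

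\emph{Reduction to \cite{BHS3}.} First I would record that the standing hypotheses place us in the generic situation of \cite{BHS3}. Since $\mathcal{D}$ is $\varphi$-generic crystalline, the character $\delta_{\mathcal{F}}$ is unramified and the ratios of the Frobenius eigenvalues avoid $1$ and $p^f$; hence the parameter of $\mathcal{F}$, being the product of $\delta_{\mathcal{F}}$ with an algebraic character, is regular in the sense of $(\ref{regchar})$ (as recalled at the end of \S\ref{crystalline}), so that $\mathcal{F}[\tfrac{1}{t}]$ is a triangulation of $\mathcal{D}[\tfrac{1}{t}]$ of the type for which the local model of \cite[\S3.5]{BHS3} is available; the second input required there, regularity of the Hodge--Tate type, is among our hypotheses. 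By the Remarks above, $\mathfrak{X}_{\mathcal{D},\mathcal{F}[\frac{1}{t}]}$ is the functor of isomorphism classes of the groupoid $X_{\mathcal{D},\mathcal{F}[\frac{1}{t}]}$ of \cite[\S3.5]{BHS3}, $\mathfrak{X}_{W_{\dR}^+(\mathcal{D}),W_{\dR}(\mathcal{F}[\frac{1}{t}])}$ is its analogue on the de Rham side, and \eqref{foncteurWdR} is, under these identifications, the morphism induced by $W_{\dR}^+$.

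\emph{Conclusion via the local model.} I would then invoke \cite[Cor.~3.5.6.]{BHS3}. Formal smoothness may be checked after adding framings, the framing functors being formally smooth with the same source and target; and after framing, the local model of \cite[\S3.3--3.5]{BHS3} identifies \eqref{foncteurWdR} with a structure morphism of a Grothendieck--Springer type variety---the one remembering the $\BdR^+$-representation together with its flag---which is a smooth morphism of schemes. Hence \eqref{foncteurWdR} is formally smooth. This is precisely where regularity enters: the $\varphi$-genericity and the regular Hodge--Tate type force the vanishing of the Galois cohomology groups that could obstruct the de Rham side, and make the local-model identification valid, so that---although $\mathfrak{X}_{\mathcal{D},\mathcal{F}[\frac{1}{t}]}$ need not itself be formally smooth, as pointed out above---all of its non-smoothness lies in the fibers of $W_{\dR}^+$.

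\emph{Main obstacle.} The only content beyond bookkeeping is the compatibility just invoked: one must verify that the functor $W_{\dR}^+$, defined here through $\BdR^+$-representations of $\mathcal{G}_K$, agrees---via the dictionary of \cite{BHS3}, which rests on the comparison isomorphism $\mathcal{R}_A[\tfrac{1}{t}]\otimes_{K_0\otimes_{\Qp}A}D_{\cris}(\mathcal{D}_A)\simeq\mathcal{D}_A[\tfrac{1}{t}]$ and on the description of $W_{\dR}^+$ in terms of flags of $\BdR^+$-lattices---with the structure morphism of the local model of the trianguline deformation space. Granting that dictionary, the argument is complete. Alternatively one could argue directly by obstruction theory: the tangent and obstruction spaces of both sides are computed by $(\varphi,\Gamma_K)$- and $\mathcal{G}_K$-cohomology of the relevant endomorphism modules, $W_{\dR}^+$ induces a map between these, and $\varphi$-genericity together with the regular Hodge--Tate type forces the pertinent relative cohomology to vanish, yielding surjectivity on tangent spaces and injectivity on obstructions; this is in substance the proof of \cite[Cor.~3.5.6.]{BHS3}.
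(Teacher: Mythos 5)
Your proposal is correct and follows the same route as the paper: the paper's entire proof of Proposition \ref{smoothBHS} is the observation that the statement is essentially \cite[Cor.~3.5.6]{BHS3}, and your verification that the $\varphi$-genericity gives a regular parameter, that the regular Hodge--Tate hypothesis is in force, and that \eqref{foncteurWdR} is the morphism of (isomorphism classes of) the groupoids considered there is exactly the bookkeeping implicit in that citation. The extra sketch you give of the mechanism inside \cite{BHS3} (framing plus the local model, or the cohomological obstruction argument) is consistent with how that corollary is proved, but it is not needed beyond the citation itself.
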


\begin{prop}\label{kercris}
Let $\mathcal{D}$ be a $\varphi$-generic crystalline $(\varphi,\Gamma_K)$-module over $\mathcal{R}_L$ of regular Hodge-Tate type. Then, for all objects $A$ of $\mathcal{C}$, the preimage of the trivial $A\otimes_{\Q_p}\BdR^+$-representation of $\mathcal{G}_K$ under the map $W_{\dR}^+:\mathfrak{X}_{\mathcal{D}}(A)\rightarrow \mathfrak{X}_{W_{\dR}^+(\mathcal{D})}(A)$ identifies with $\mathfrak{X}_{\mathcal{D}}^{\cris}(A)$. Consequently the following sequence of $L$-vector spaces is exact
\[ 0\rightarrow T\mathfrak{X}_{\mathcal{D}}^{\cris}\rightarrow T\mathfrak{X}_{\mathcal{D}}\rightarrow T\mathfrak{X}_{W_{\dR}^+(\mathcal{D})}. \]
\end{prop}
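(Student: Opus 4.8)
The plan is to prove the identification of subfunctors first, and then deduce the exact sequence of tangent spaces by general nonsense about left-exact functors. For the functorial statement, fix an object $A$ of $\mathcal{C}$ and a pair $(\mathcal{D}_A,\pi_A)\in\mathfrak{X}_{\mathcal{D}}(A)$, and suppose that $W_{\dR}^+(\mathcal{D}_A)$ is the trivial deformation of $W_{\dR}^+(\mathcal{D})$, i.e.~$W_{\dR}^+(\mathcal{D}_A)\simeq A\otimes_L W_{\dR}^+(\mathcal{D})$ compatibly with $\pi_A$. I would like to conclude that $\mathcal{D}_A$ is crystalline, which means $\rk_{K_0}D_{\cris}(\mathcal{D}_A)=A\otimes_L D_{\cris}(\mathcal{D})$ is of full rank. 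The first step is to note that $\mathcal{D}$ being de Rham (it is even crystalline), one has $\dim_K D_{\dR}(\mathcal{D})=\rk_{\mathcal{R}}\mathcal{D}=:n$, and that $W_{\dR}^+$ being exact, for a deformation $\mathcal{D}_A$ the module $D_{\dR}(\mathcal{D}_A)=(W_{\dR}^+(\mathcal{D}_A)\otimes_{\BdR^+}\BdR)^{\mathcal{G}_K}$ is computed from the Galois cohomology of $W_{\dR}^+(\mathcal{D}_A)$; the hypothesis that $W_{\dR}^+(\mathcal{D}_A)$ is trivial forces $D_{\dR}(\mathcal{D}_A)$ to be free over $A\otimes_{\Qp}K$ of rank $n$ with trivial Hodge filtration (all jumps at a fixed spot), so $\mathcal{D}_A$ is de Rham, with constant Hodge--Tate weights equal to those of $\mathcal{D}$.

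The heart of the matter is upgrading ``de Rham'' to ``crystalline'' for the deformation. Here I would argue by induction on the length of $A$, using a small extension $0\to I\to A\to A'\to 0$ with $I\cdot\mathfrak{m}_A=0$ and $I\simeq L$: assuming $\mathcal{D}_{A'}$ is crystalline (so $D_{\cris}(\mathcal{D}_{A'})$ has full rank) one studies the obstruction to $\rk_{K_0}D_{\cris}(\mathcal{D}_A)$ being full. Since $D_{\cris}(\mathcal{D}_A)=\mathcal{D}_A[\tfrac1t]^{\Gamma_K}$ and $D_{\cris}$ is left exact, tensoring the short exact sequence $0\to I\otimes_L\mathcal{D}\to\mathcal{D}_A\to\mathcal{D}_{A'}\to0$ with $\mathcal{R}_A[\tfrac1t]$ and taking $\Gamma_K$-invariants gives a left-exact sequence comparing $D_{\cris}(\mathcal{D}_A)$ to $D_{\cris}(\mathcal{D})$ and $D_{\cris}(\mathcal{D}_{A'})$; a rank count shows $\mathcal{D}_A$ is crystalline precisely when the connecting map $D_{\cris}(\mathcal{D}_{A'})\to H^1_{(\varphi,\Gamma_K)}(I\otimes_L\mathcal{D}[\tfrac1t])^{\varphi=1,\Gamma_K}$-type term vanishes, and this term is controlled by $D_{\dR}$ versus $D_{\cris}$ of the extension, which by the previous paragraph (triviality of $W_{\dR}^+$) forces the de Rham side to contribute nothing new — the extension is crystalline as soon as it is de Rham with the Hodge filtration already split off, which is exactly what triviality of $W_{\dR}^+(\mathcal{D}_A)$ buys us. Concretely: a de Rham $(\varphi,\Gamma_K)$-module over $\mathcal{R}_A$ whose associated $\BdR^+$-representation is trivial has $W_{\dR}^+$ carrying no filtration data, and comparing with $D_{\cris}$ via the isomorphism $\mathcal{R}_A[\tfrac1t]\otimes_{K_0\otimes_{\Qp}A}D_{\cris}(\mathcal{D}_A)\simeq\mathcal{D}_A[\tfrac1t]$ (Berger) one sees $D_{\cris}(\mathcal{D}_A)$ has full rank. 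Conversely, if $\mathcal{D}_A$ is crystalline then $W_{\dR}^+(\mathcal{D}_A)$ is a successive extension, as a $\BdR^+$-representation, of copies of $\BdR^+$ with trivial action (since the Hodge--Tate weights are those of $\mathcal{D}$ but shifted so that after the appropriate twist they are all zero in the relevant sense — more precisely the $\BdR^+$-representation underlying a crystalline module of fixed Hodge--Tate type is rigid, hence equals the trivial deformation), giving the reverse inclusion $\mathfrak{X}_{\mathcal{D}}^{\cris}(A)\subset(W_{\dR}^+)^{-1}(\mathrm{triv})$.

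Once the functorial identity $\mathfrak{X}_{\mathcal{D}}^{\cris}=(W_{\dR}^+)^{-1}(\triv)$ is established, the exact sequence of tangent spaces is immediate: applying the identity to $A=L[\varepsilon]$ identifies $T\mathfrak{X}_{\mathcal{D}}^{\cris}$ with the kernel of $W_{\dR}^+:T\mathfrak{X}_{\mathcal{D}}\to T\mathfrak{X}_{W_{\dR}^+(\mathcal{D})}$, because the tangent space of $\mathfrak{X}_{W_{\dR}^+(\mathcal{D})}$ at the base point is a linear space and the preimage of the base point of $\mathfrak{X}_{W_{\dR}^+(\mathcal{D})}$ over $L[\varepsilon]$ corresponds to the kernel of the induced linear map on tangent spaces. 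This yields exactness of
\[ 0\to T\mathfrak{X}_{\mathcal{D}}^{\cris}\to T\mathfrak{X}_{\mathcal{D}}\to T\mathfrak{X}_{W_{\dR}^+(\mathcal{D})}. \]
I expect the main obstacle to be the implication ``$\mathcal{D}_A$ de Rham with trivial $W_{\dR}^+$ $\Rightarrow$ $\mathcal{D}_A$ crystalline'': one must be careful that de Rham-ness over an Artinian base, together with triviality of the $\BdR^+$-representation, really propagates crystallinity up the tower, rather than only ensuring the Hodge--Tate weights are constant; the cited Lemma~3.3.5 of \cite{BHS3} on freeness of $W_{\dR}^+(\mathcal{D}_A)$ and Berger's comparison isomorphism are the tools that make this work, and the $\varphi$-genericity and regularity of the Hodge--Tate type are what guarantee there is no crystalline period ``disappearing'' into a higher Frobenius eigenvalue congruence.
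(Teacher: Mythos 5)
Your reduction of the tangent-space exact sequence to the functorial identification, and your identification of ``trivial image under $W_{\dR}^+$'' with ``$\mathcal{D}_A$ is de Rham (with constant regular Hodge--Tate type)'', are fine and agree with the paper. The problem is the step you yourself flag as the heart of the matter: upgrading de Rham to crystalline. As written, your justification is circular. You invoke Berger's comparison isomorphism $\mathcal{R}_A[\tfrac1t]\otimes_{K_0\otimes_{\Qp}A}D_{\cris}(\mathcal{D}_A)\simeq\mathcal{D}_A[\tfrac1t]$ to conclude that $D_{\cris}(\mathcal{D}_A)$ has full rank, but that isomorphism is exactly the statement that $\mathcal{D}_A$ is crystalline; it cannot be used to prove it. More fundamentally, ``de Rham with trivial $\BdR^+$-representation'' does \emph{not} by itself imply crystalline: a de Rham self-extension of a crystalline module can be semistable non-crystalline precisely when two Frobenius eigenvalues differ by $p^f$, and the sole purpose of the $\varphi$-genericity hypothesis is to exclude this. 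In your plan this hypothesis only appears as a closing remark (``no crystalline period disappearing into a Frobenius eigenvalue congruence''), but it never enters the argument, so the induction on small extensions proves nothing as it stands.

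The paper closes this gap by a different and shorter route: once $\mathcal{D}_A$ is known to be de Rham, Berger's $p$-adic monodromy theorem gives that it is potentially semistable; since it is a successive extension of copies of the crystalline module $\mathcal{D}$, it is in fact semistable; and then the relation $N\varphi=p^f\varphi N$ on $D_{\st}(\mathcal{D}_A)$ together with $\varphi$-genericity ($\phi_i\phi_j^{-1}\neq p^f$) forces $N=0$, hence crystallinity. If you prefer to keep your d\'evissage by small extensions, the correct substitute for your comparison-isomorphism step is a Bloch--Kato type computation showing that the quotient of de Rham by crystalline extension classes of $\mathcal{D}$ by itself is controlled by the $p^f$-eigenspace of $\varphi$ on $D_{\cris}(\End\mathcal{D})$, which vanishes by genericity; but this has to be carried out explicitly, and it is precisely the content your proposal currently asserts rather than proves.
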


\begin{proof}
An object $(\mathcal{D}_A,\pi_A)$ has a trivial image by $W_{\dR}^+$ if and only if $\mathcal{D}_A$ is a de Rham $(\varphi,\Gamma_K)$-module. We can conclude as in the proof of \cite[Cor.~2.7.(i)]{HellmSchrdensity}. Namely it follows from the $p$-adic monodromy theorem (\cite[Thm.~2.3.5.(1)]{BergerBpaires}) that $\mathcal{D}_A$ is a potentially semistable $(\varphi,\Gamma_K)$-module. Being an extension of finitely many cristalline representations, it is actually semistable. It follows from the $\varphi$-genericity assumption on $\mathcal{D}$ that no quotient of eigenvalues of $\varphi^f$ on $D_{\st}(\mathcal{D}_A)$ can be equal to $p^f$, so that the monodromy operator of $D_{\st}(\mathcal{D}_A)$ is trivial. Hence $\mathcal{D}_A$ is crystalline.
\end{proof}

\begin{coro}\label{exactcristriangulin}
Let $\mathcal{D}$ be a $\varphi$-generic crystalline $(\varphi,\Gamma_K)$-module over $\mathcal{R}_L$ of regular Hodge-Tate type and $\mathcal{F}$ a triangulation of $\mathcal{D}$. 
Then, for all objects $A$ of $\mathcal{C}$, the preimage of the trivial $A\otimes_{\Q_p}\BdR^+$-representation of $\mathcal{G}_K$ under the map \eqref{foncteurWdR} identifies with $\mathfrak{X}_{\mathcal{D}}^{\cris}(A)$.
Moreover the following sequence is exact
\[ 0\longrightarrow T\mathfrak{X}_{\mathcal{D}}^{\cris}\longrightarrow T\mathfrak{X}_{\mathcal{D},\mathcal{F}[\frac{1}{t}]}\longrightarrow T\mathfrak{X}_{W_{\dR}^+(\mathcal{D}),W_{\dR}(\mathcal{F}[\frac{1}{t}])}\longrightarrow0\]
\end{coro}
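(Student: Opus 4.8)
The plan is to deduce both assertions from Proposition~\ref{kercris} by carrying the triangulation data along, controlling the filtration through the injectivity and relative representability of Proposition~\ref{genericfiltrations}, and obtaining the final surjectivity from the formal smoothness of Proposition~\ref{smoothBHS}.

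First I would prove the identification of the fibre. The key observation is that the composite of \eqref{foncteurWdR} with the forgetful map $\mathfrak{X}_{W_{\dR}^+(\mathcal{D}),W_{\dR}(\mathcal{F}[\tfrac{1}{t}])}\to\mathfrak{X}_{W_{\dR}^+(\mathcal{D})}$ sends a triple $(\mathcal{D}_A,\pi_A,\mathcal{M}_A)$ to the image of the underlying pair $(\mathcal{D}_A,\pi_A)$ under the map $W_{\dR}^+\colon\mathfrak{X}_{\mathcal{D}}\to\mathfrak{X}_{W_{\dR}^+(\mathcal{D})}$ of Proposition~\ref{kercris}. Hence a triple lies over the trivial $A\otimes_{\Qp}\BdR^+$-representation only if $\mathcal{D}_A$ is crystalline. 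Conversely, for $(\mathcal{D}_A,\pi_A)\in\mathfrak{X}_{\mathcal{D}}^{\cris}(A)$ the construction recalled before the statement produces the filtration $\mathcal{M}_A=\mathcal{F}_A[\tfrac{1}{t}]$, where $\mathcal{F}_A$ is the triangulation attached to the unique $\varphi$-stable flag of Lemma~\ref{filtrationcristalline}; since $\mathcal{D}$ is $\varphi$-generic the parameter of $\mathcal{F}$ is regular, so by Proposition~\ref{genericfiltrations} the forgetful map $\mathfrak{X}_{\mathcal{D},\mathcal{F}[\tfrac{1}{t}]}\to\mathfrak{X}_{\mathcal{D}}$ is injective and $\mathcal{M}_A$ is the only filtration lying over $(\mathcal{D}_A,\pi_A)$. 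Combining these facts (together with the delicate point discussed below) identifies the preimage of the trivial deformation under \eqref{foncteurWdR} with $\mathfrak{X}_{\mathcal{D}}^{\cris}(A)$.

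Granting the first assertion, the exact sequence follows formally. The inclusion of subfunctors $\mathfrak{X}_{\mathcal{D}}^{\cris}\subset\mathfrak{X}_{\mathcal{D},\mathcal{F}[\tfrac{1}{t}]}$ gives the injectivity of the first arrow on tangent spaces, and exactness in the middle is the first assertion applied to $A=L[\varepsilon]$: a tangent vector of $\mathfrak{X}_{\mathcal{D},\mathcal{F}[\tfrac{1}{t}]}$ maps to $0$ in $T\mathfrak{X}_{W_{\dR}^+(\mathcal{D}),W_{\dR}(\mathcal{F}[\tfrac{1}{t}])}$ exactly when it belongs to the preimage of the trivial deformation, i.e.\ when it lies in $\mathfrak{X}_{\mathcal{D}}^{\cris}(L[\varepsilon])=T\mathfrak{X}_{\mathcal{D}}^{\cris}$. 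Finally, under our hypotheses on $\mathcal{D}$ and $\mathcal{F}$ the morphism \eqref{foncteurWdR} is formally smooth by Proposition~\ref{smoothBHS}, hence surjective on tangent spaces, which gives exactness on the right.

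The delicate point, which is really the heart of the first assertion, is to check that a crystalline deformation $\mathcal{D}_A$ is sent by \eqref{foncteurWdR} to the trivial deformation \emph{of the pair} $(W_{\dR}^+(\mathcal{D}),W_{\dR}(\mathcal{F}[\tfrac{1}{t}]))$, and not merely to a deformation whose underlying $\BdR^+$-representation is trivial; equivalently, that the flag $W_{\dR}(\mathcal{M}_A)$ attached to the crystalline triangulation does not move, relative to a trivialisation of $W_{\dR}^+(\mathcal{D}_A)$. Here I would use the explicit description $\mathcal{M}_{A,i}=\mathcal{R}_A[\tfrac{1}{t}]\otimes_{K_0\otimes_{\Qp}A}F_{A,i}$ from \S\ref{crystalline} together with the canonical isomorphism $\mathcal{R}_A[\tfrac{1}{t}]\otimes_{K_0\otimes_{\Qp}A}D_{\cris}(\mathcal{D}_A)\simeq\mathcal{D}_A[\tfrac{1}{t}]$, reducing the claim to a statement about flags in the free module $D_{\cris}(\mathcal{D}_A)$, where the regularity of the Hodge--Tate type and the $\varphi$-genericity of $\mathcal{D}$ are exactly what make the flag rigid. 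This is the only step that genuinely uses the crystalline hypothesis beyond Proposition~\ref{kercris}; everything else is bookkeeping with the three cited propositions.
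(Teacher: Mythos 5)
Your first two paragraphs reproduce the paper's own argument: the identification of the preimage is deduced from Proposition \ref{kercris} (together with the inclusion $\mathfrak{X}_{\mathcal{D}}^{\cris}\subset\mathfrak{X}_{\mathcal{D},\mathcal{F}[\frac{1}{t}]}$ constructed in \S\ref{crystalline} and the injectivity of Proposition \ref{genericfiltrations}), the sequence is obtained by evaluating at $L[\varepsilon]$, and exactness on the right is exactly Proposition \ref{smoothBHS}. Up to that point you are on the paper's route, and the bookkeeping is correct.

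The gap is your third paragraph. You have indeed put your finger on a real subtlety --- triviality of the deformation of the \emph{pair} $(W_{\dR}^+(\mathcal{D}),W_{\dR}(\mathcal{F}[\tfrac{1}{t}]))$ is stronger than triviality of the underlying $A\otimes_{\Qp}\BdR^+$-representation --- but the resolution you propose does not work: $\varphi$-genericity and regularity of the Hodge--Tate type do \emph{not} make the flag rigid relative to the Hodge filtration. What $\varphi$-genericity rigidifies is the $\varphi$-stable flag inside $D_{\cris}(\mathcal{D}_A)$ (Lemma \ref{filtrationcristalline}); after applying $W_{\dR}^+$ the Frobenius is forgotten, and by the local model dictionary recalled before the proof of Theorem \ref{mainlocalthm} the isomorphism class of the image of $(\mathcal{D}_A,\pi_A,\mathcal{M}_A)$ under \eqref{foncteurWdR} is the pair of flags $\bigl(D_{\pdR}(W_{\dR}(\mathcal{M}_A)),\Fil_{\dR}(D_{\dR}(\mathcal{D}_A))\bigr)$ modulo automorphisms lifting the identity. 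Crystalline deformations can move the Hodge filtration against the (rigid) $\varphi$-stable flag: take $n=2$, $K=\Qp$, $\mathcal{F}$ critical, so $\Fil_{\dR}=Le_1=D_{\cris}(\mathcal{F}_1)$ with $e_1,e_2$ a $\varphi$-eigenbasis, and consider the crystalline deformation over $A=L[\varepsilon]$ with unchanged $\varphi$ and $\Fil_A=A(e_1+\varepsilon e_2)$; its canonical flag is $Ae_1$, and no automorphism of $A^2$ lifting the identity carries the pair $(Ae_1,\,A(e_1+\varepsilon e_2))$ to the constant pair $(Ae_1,\,Ae_1)$, so this crystalline deformation has \emph{nonzero} image in $T\mathfrak{X}_{W_{\dR}^+(\mathcal{D}),W_{\dR}(\mathcal{F}[\frac{1}{t}])}$. (This is, in substance, why crystalline deformations land in the component $\mathfrak{X}^{w_0}$ of \S\ref{reminderqtriangulinecomponents} rather than over the trivial pair.) So the rigidity you invoke cannot be proved by any refinement of your sketch, and your final step fails whenever $\mathcal{F}$ is critical --- which is precisely the case the paper is after. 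Note that the paper's two-line proof does not pass through this point at all: it reads the first assertion, and the kernel computation at $L[\varepsilon]$, purely through the triviality of the underlying $\BdR^+$-representation as supplied by Proposition \ref{kercris}, and in the proof of Theorem \ref{mainlocalthm} only that identification together with the surjectivity coming from Proposition \ref{smoothBHS} is used.
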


\begin{proof}
The first assertion is a direct consequence of Proposition \ref{kercris}. The second assertion follows after evaluation at $L[\varepsilon]$. The exactness on the right is a consequence of Proposition \ref{smoothBHS}.
\end{proof}

\subsection{Main theorem: the local version}
Let $\mathcal{D}$ be a $\varphi$-generic crystalline $(\varphi,\Gamma_K)$-module over $\mathcal{R}_L$. We write $\Tri(\mathcal{D})$ for the set of triangulations of $\mathcal{D}$, which is in bijection with the set of refinements of $D_{\cris}(\mathcal{D})$. The local theorem states as follows
\begin{theo}\label{mainlocalthm}
Let $\mathcal{D}$ be a $\varphi$-generic crystalline $(\varphi,\Gamma_K)$-module over $\mathcal{R}_L$ of regular Hodge-Tate type. Let $\Tri(\mathcal{D})$ be the set of triangulations of $\mathcal{D}$. Then the $L$-linear map
\[ \bigoplus_{\mathcal{F}\in\Tri(\mathcal{D})} T\mathfrak{X}_{\mathcal{D},\mathcal{F}[\frac{1}{t}]}\longrightarrow T\mathfrak{X}_{\mathcal{D}} \]
is surjective.
\end{theo}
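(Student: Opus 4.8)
The plan is to reduce Theorem \ref{mainlocalthm} to the linear-algebra statement of Theorem \ref{enveloppeBorels} (equivalently, Theorem \ref{intersectionBorels}) by passing through the de Rham realization functor $W_{\dR}^+$. Fix the crystalline module $\mathcal{D}$ and write $W = W_{\dR}^+(\mathcal{D})$, an $L\otimes_{\Qp}\BdR^+$-representation of $\mathcal{G}_K$. The key observation is that, because $\mathcal{D}$ is $\varphi$-generic and crystalline, for each triangulation $\mathcal{F}\in\Tri(\mathcal{D})$ Lemma \ref{filtrationcristalline} produces a canonical flag on $D_{\cris}$, hence via $W_{\dR}$ a complete $\mathcal{G}_K$-stable flag $F_{\mathcal{F}} \coloneqq W_{\dR}(\mathcal{F}[\tfrac1t])$ on $W\otimes_{\BdR^+}\BdR$. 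Since the Hodge--Tate type is regular, these flags are precisely the flags on $W\otimes_{\BdR^+}\BdR$ that are "in position $w$" with respect to the Hodge filtration for varying $w\in W$ (the Weyl group), and the refinements of the $\varphi$-module $D_{\cris}(\mathcal{D})$ are in bijection with $\mathfrak{S}_n$ (the roots of $\chi(D_{\cris}(\mathcal{D}))$ being distinct). This is exactly the combinatorial setup matching Theorem \ref{intersectionBorels}.

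Concretely, I would proceed as follows. First, by Corollary \ref{exactcristriangulin}, for each $\mathcal{F}$ there is an exact sequence of tangent spaces
\[ 0\longrightarrow T\mathfrak{X}_{\mathcal{D}}^{\cris}\longrightarrow T\mathfrak{X}_{\mathcal{D},\mathcal{F}[\frac{1}{t}]}\longrightarrow T\mathfrak{X}_{W_{\dR}^+(\mathcal{D}),F_{\mathcal{F}}}\longrightarrow0,\]
and by Proposition \ref{kercris} the corresponding sequence for $\mathfrak{X}_{\mathcal{D}}$ itself, namely $0\to T\mathfrak{X}_{\mathcal{D}}^{\cris}\to T\mathfrak{X}_{\mathcal{D}}\to T\mathfrak{X}_{W}$. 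Summing the first sequences over $\mathcal{F}\in\Tri(\mathcal{D})$ and comparing with the second, a diagram chase shows that the desired surjectivity of $\bigoplus_{\mathcal{F}} T\mathfrak{X}_{\mathcal{D},\mathcal{F}[\frac1t]}\to T\mathfrak{X}_{\mathcal{D}}$ follows once we know that the image of $T\mathfrak{X}_{\mathcal{D}}\to T\mathfrak{X}_{W}$ is contained in the sum $\sum_{\mathcal{F}}\Img\big(T\mathfrak{X}_{W,F_{\mathcal{F}}}\to T\mathfrak{X}_{W}\big)$ (using $T\mathfrak{X}_{\mathcal{D}}^{\cris}$ already lies in each $T\mathfrak{X}_{\mathcal{D},\mathcal{F}[\frac1t]}$). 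So it suffices to prove that every deformation of $W$ as a $\BdR^+$-representation that lifts to a deformation of $\mathcal{D}$ — in fact every first-order deformation of $W$ at all, since the Hodge--Tate weights are forced — admits a decomposition as a sum of deformations compatible with the various flags $F_{\mathcal{F}}$.

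Second, I would identify $T\mathfrak{X}_{W}$ and the subspaces $T\mathfrak{X}_{W,F_{\mathcal{F}}}$ with the tangent spaces appearing in Theorem \ref{intersectionBorels}. The deformation functor $\mathfrak{X}_{W}$ of the $\BdR^+$-representation, after fixing Hodge--Tate weights (automatic by regularity), is governed by the relative position of the Hodge filtration inside the trivialized $\BdR$-representation — equivalently, it is a local model of the form $\gtilde$ (the Grothendieck resolution) of Section \ref{tangentlocalmodel}, with the flag $F_{\mathcal{F}}$ playing the role of the second flag and the Hodge filtration playing the role of the point $hB(k)$. Under this identification, $T_{(0,hB(k))}\gtilde = h\mathfrak{b}h^{-1}\oplus T_{hB(k)}(G/B)$ by Lemma \ref{tangent1}, $T\mathfrak{X}_{W,F_{\mathcal{F}}}$ maps onto $d\pi_2\big(T_{(gB,0,hB)}(\gtilde\times_{\mathfrak{g}}\gtilde)\big)$ for the appropriate $T$-fixed point $gB$, and the required surjectivity is literally the content of Theorem \ref{intersectionBorels}, i.e.\ the identity $\sum_{\mathfrak{b}'\in\mathcal{B}}(\mathfrak{b}'\cap h\mathfrak{b}h^{-1}) = h\mathfrak{b}h^{-1}$, which in turn is Theorem \ref{enveloppeBorels}.

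The main obstacle, I expect, is the bookkeeping in the second step: making precise the identification of $\mathfrak{X}_{W}$ with the local model $\gtilde_{K/\Qp,L}\simeq\prod_{\tau\in\Sigma}\gtilde_\tau$ (the de Rham invariants naturally split over $\Sigma$, so everything is a product over embeddings $\tau$, and Theorem \ref{enveloppeBorels} must be applied factor by factor), and verifying that the flags $F_{\mathcal{F}}$ constructed from refinements exhaust exactly the $T$-fixed points $(G/B)^T$ in each factor — this uses $\varphi$-genericity to get all $n!$ orderings of Frobenius eigenvalues and hence all Weyl translates. Once the dictionary between crystalline/trianguline deformation theory and the Grothendieck resolution is set up (this is the local model of \cite{BHS3}), the surjectivity is formal from Theorem \ref{intersectionBorels} together with the exact sequences of Corollary \ref{exactcristriangulin} and Proposition \ref{kercris}.
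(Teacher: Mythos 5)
Your proposal is correct and follows essentially the same route as the paper's proof: reduce via $W_{\dR}^+$ and the exact sequences of Proposition \ref{kercris} and Corollary \ref{exactcristriangulin} to a surjectivity statement for deformations of $W_{\dR}^+(\mathcal{D})$, identify the relevant tangent spaces through the local model of \cite{BHS3} (the paper does this via the framed functors $\mathfrak{X}^\Box_W$, $\mathfrak{X}^\Box_{W,F}$, whose forgetful maps are formally smooth), split over $\tau\in\Sigma$, use $\varphi$-genericity to match the $n!$ triangulations with the torus-fixed flags, and conclude by Theorem \ref{intersectionBorels}. Your concluding diagram chase is the same in substance as the paper's ``five lemma'' argument, so no further comparison is needed.
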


\begin{rema}
The special case of the result where all refinements of $D$ are assumed non critical is a theorem due to G. Chenevier for $K=\Qp$ (\cite[Thm.~3.19]{Chefougere}) and to K. Nakamura for an arbitrary extension $K$ of $\Qp$ (\cite[Thm.~2.62.]{NakamuraGL2}).
\end{rema}

Before giving the proof of Theorem \ref{mainlocalthm}, let us recall some constructions and results from \cite{BHS3}, to which we refer the reader for relevent definitions if needed.

Let $W$ be an almost de Rham $L\otimes_{\Qp}\BdR^+$-representation of $\mathcal{G}_K$ (see \cite[3.1]{BHS3}). Let $i$ be a $L\otimes_{\Qp}K$-linear isomorphism $(L\otimes_{\Qp}K)^n\xrightarrow{\sim} D_{\pdR}(W)$. Let $\mathfrak{X}_W^\Box$ be the functor from $\mathcal{C}$ to the category of sets such that $\mathfrak{X}_W^\Box(W)$ is the set of isomorphism classes of triples $(W_A,\pi_A,i_A)$ where $W_A$ is some $A\otimes_{\Qp}\BdR^+$-representation of $\mathcal{G}_K$, $\pi_A$ is map from $W_A$ to $W$ inducing an isomorphism from $L\otimes_A W_A$ to $W$ and $i_A$ is an isomorphism between $(A\otimes_{\Qp}K)^n$ and $D_{\pdR}(W_A)$ compatible with $\pi_A$ and $i$. Let $\mathfrak{g}$ be the Lie algebra of the algebraic group $\GL_{n,K}$ and let $\tilde{\mathfrak{g}}\rightarrow \mathfrak{g}$ be Grothendieck's simultaneous resolution of singularities. As in section $\ref{tangentlocalmodel}$ we consider the scheme $X\coloneqq \gtilde\times_{\mathfrak{g}}\gtilde$. It follows from \cite[Lem.~3.2.2]{BHS3} that the forgetful map $\mathfrak{X}_W^\Box\rightarrow\mathfrak{X}_W$ is formally smooth, and  \cite[Thm.~3.2.5]{BHS3} states that the functor $\mathfrak{X}_W^\Box$ is pro-representable by the completion of $\gtilde_{K/\Qp,L}$ at the point $x=(0,i^{-1}(\Fil_{\dR}))$.

Let $F$ be a complete flag of $W\otimes_{\BdR^+}\BdR$ stable under $\mathcal{G}_K$. We can define $\mathfrak{X}_{W,F}$ as in section \ref{Bdr-repns} and $\mathfrak{X}_{W,F}^\Box$ by adding a framing of $D_{\pdR}(W_A)$ for $(W_A,\pi_A,F_A)\in \mathfrak{X}_{W,F}(A)$. The forgetful map $\mathfrak{X}_{W,F}^\Box\rightarrow \mathfrak{X}_{W,F}$ is then formally smooth and the functor $\mathfrak{X}_{W,F}^\Box$ is pro-representable by the completion of $X_{K/\Qp,L}$ at the point $x_{F}=(F_1,0,F_2)$ where $F_1=i^{-1}(D_{\pdR}(F))$ and $F_2=i^{-1}(\Fil_{\dR})$ (\cite[Cor.~3.5.8.(i)]{BHS3}). Moreover, the following diagram is commutative
\begin{equation}\label{diagramm1}
\begin{tikzcd} \mathfrak{X}_{W,F}^\Box \ar[r, "\mathrm{forget}"] \isoarrow{d} & \mathfrak{X}_W^\Box \isoarrow{d} \\
\widehat{X_{K/\Qp,L}}_{x_{F}}\ar[r,"\pi_2"] & \widehat{\gtilde_{K/\Qp,L}}_{x}
\end{tikzcd}
\end{equation}
where the upper horizontal map is the forgetful map and the lower horizontal map is induced by the second projection of $X$ on $\gtilde$. When $W=W_{\dR}^+(\mathcal{D})$ for a $(\varphi,\Gamma_K)$-module $\mathcal{D}$ and $F=W_{\dR}(\mathcal{M})$ for $\mathcal{M}$ a triangulation of $\mathcal{D}[\tfrac{1}{t}]$ we will use the shorter notation $x_{\mathcal{M}}$ in place of $x_{W_{\dR}(\mathcal{M})}$.

\begin{proof}[Proof of Theorem \ref{mainlocalthm}]
Let $W\coloneqq W_{\dR}^+(\mathcal{D})$. In a first step we prove that the $L$-linear map
\[ \bigoplus_{\mathcal{F}\in\Tri(\mathcal{D})}T\mathfrak{X}_{W,W_{\dR}(\mathcal{F}[\frac{1}{t}])} \longrightarrow T\mathfrak{X}_W \]
is surjective. Let's consider the commutative diagram
\[ \begin{tikzcd} \bigoplus_{\mathcal{F}\in\Tri(\mathcal{D})}T\mathfrak{X}^\Box_{W,W_{\dR}(\mathcal{F}[\frac{1}{t}])} \ar[r] \ar[d] & T\mathfrak{X}^\Box_{W} \ar[d] \\
\bigoplus_{\mathcal{F}\in\Tri(\mathcal{D})}T\mathfrak{X}_{W,W_{\dR}(\mathcal{F}[\frac{1}{t}])} \ar[r] & T\mathfrak{X}_{W} \end{tikzcd} \]
As the forgetful map $\mathfrak{X}^\Box_{W}\rightarrow \mathfrak{X}_{W}$ is formally smooth, it induces a surjection on the tangent spaces. Consequently it is sufficient to prove that the upper horizontal map is surjective.

Because of the commutative diagram $(\ref{diagramm1})$ this is equivalent to the surjectivity of the map 
\[ \pi_2:\;\sum_{\mathcal{F}\in\Tri(\mathcal{D})} T_{x_{\mathcal{F}[\frac{1}{t}]}}X_{K/\Qp,L} \longrightarrow T_{x} \gtilde_{K/\Qp,L}\]
induced by the second projection. Let $\alpha$ be the morphism of $K$-schemes $X= \gtilde\times_{\mathfrak{g}}\gtilde\rightarrow \gtilde$ given by the second projection. Let $\alpha_{K/\Qp}$ its image by the Weil restriction functor from $K$ to $\Qp$ and let $\alpha_{K/\Qp,L}$ be the base change of $\alpha_{K/\Qp}$ to $L$. For each $\tau\in\Sigma$ we write $\alpha_{\tau}$ for the base change of $\alpha$ by $\tau:K\rightarrow L$. Then we have the following decompositions
\begin{align*}
X_{K/\Qp}\times_{\Qp}L&\simeq\prod_{\tau\in\Sigma}X_{\tau}, \\
\gtilde_{K/\Qp,L}&\simeq\prod_{\tau\in\Sigma}\gtilde_\tau, \\
 \alpha_{K/\Qp,L}&=(\alpha_{\tau})_{\tau\in\Sigma}.
\end{align*}
Therefore it only remains to prove that for each $\tau\in\Sigma$, the $L$-linear map
\begin{equation}\label{differential_at_tau} d\alpha_{\tau}:\, \bigoplus_{\mathcal{F}\in\Tri(\mathcal{D})} T_{x_{\mathcal{F}[\frac{1}{t}],\tau}}X_\tau\longrightarrow T_{x_\tau}\gtilde_\tau \end{equation}
is surjective.

The $L\otimes_{\Qp}K_0$-linear endomorphism $\Phi\coloneqq \varphi^f$ of $D_{\cris}(\mathcal{D})$ induces an $L$-linear endomorphism $\Phi_{\tau}$ of $D_{\dR,\tau}(\mathcal{D})=D_{\cris,\tau|_{K_0}}(\mathcal{D})$ for all $\tau\in\Sigma$. This endomorphism is killed by the polynomial $\chi(D_{\cris}(\mathcal{D}),\varphi)\in L[X]$ which, by assumption, is separable and split. This implies that $\Phi_{\tau}$ is contained in a unique maximal split torus $T_{\tau}$ of $\GL(D_{\dR,\tau}(\mathcal{D}))$ or equivalently that the Zariski closure in $\GL(D_{\dR,\tau}(\mathcal{D}))$ of the group $\Phi_{\tau}^{\Z}$ is a maximal split torus $T_{\tau}$. If $\mathcal{F}$ is a triangulation of $\mathcal{D}$, the complete flag $D_{\cris}(\mathcal{F})$ of $D_{\cris}(\mathcal{D})$ is stable under $\varphi$, as is the complete flag $D_{\dR,\tau}(\mathcal{F}[\tfrac{1}{t}])$ under $\Phi_{\tau}$, and thus also $T_{\tau}$. However the maximal split torus $T_{\tau}$ fixes exactly $n!$ complete flags of $D_{\dR,\tau}(\mathcal{D})$. As $\mathcal{D}$ has exactly $n!$ triangulations we conclude that the set
\[ \{x_{\mathcal{F}[\frac{1}{t}],\tau},\, \mathcal{F}\in\Tri(\mathcal{D})\}\]
is exactly the set of points $(F,0,i^{-1}(\Fil_{\dR,\tau}))\in X_{\tau}(L)$ such that $F$ is fixed by the maximal split torus $T_{\tau}$.

The surjectivity of the map \eqref{differential_at_tau} is thus a direct consequence of Theorem \ref{intersectionBorels}. This concludes the first step of the proof.

Now consider the commutative diagram with exact lines and columns
\[
\begin{tikzcd}
0\ar[d]&0\ar[d]&\\
\bigoplus_{\mathcal{F}\in\Tri(\mathcal{D})} T\mathfrak{X}_{\mathcal{D}}^{\cris} \ar[r, "\sum"] \ar[d] &  T\mathfrak{X}_{\mathcal{D}}^{\cris} \ar[d,] \ar[r] & 0 \\
\bigoplus_{\mathcal{F}\in\Tri(\mathcal{D})} T\mathfrak{X}_{\mathcal{D},\mathcal{F}[\frac{1}{t}]} \ar[r, "\beta"] \ar[d]& T\mathfrak{X}_{\mathcal{D}} \ar[d,"W^+_{\dR}"] &  \\
\bigoplus_{\mathcal{F}\in\Tri(\mathcal{D})} T\mathfrak{X}_{W^{+}_{\dR}(\mathcal{D}),W^{+}_{\dR}(\mathcal{F}[\frac{1}{t}])} \ar[d] \ar[r] & T\mathfrak{X}_{W^{+}_{\dR}(\mathcal{D})} \ar[r]& 0 &\\
 0&&
\end{tikzcd}
\]
The exactness of the vertical lines are a consequences of Proposition \ref{kercris} and Corollary \ref{exactcristriangulin}. The surjectivity of $\sum$ is trivial and the surjectivity of the lower horizontal map is what we proved as a first step. We can deduce from this that the map $W_{\dR}^+\circ\beta$ is surjective and call upon the ``five'' Lemma to conclude that the map $\beta$ itself is surjective, which closes the proof of Theorem \ref{mainlocalthm}.
\end{proof}

\subsection{The case of Galois representations}

If $(\rho,V)$ is a continuous representation of the group $\mathcal{G}_K$ on some finite dimensional $L$-vector space $V$, let $\mathfrak{X}_{(\rho,V)}$ be the deformation functor over $\mathcal{C}$ of $(\rho,V)$. According to \cite{Bergereqdiff} there exists a functor $D_{\rig}$ from the category of continuous representation of the group $\mathcal{G}_K$ on finite dimensional $L$-vector spaces to the category of $(\varphi,\Gamma_K)$-modules over $\mathcal{R}_L$, which is proved fully faithful by \cite[Cor.~1.5]{ColmezDensite}. Then, \cite[Lem.~2.2.7]{BelChe} shows that if $A$ is an object of $\mathcal{C}$ and $(\rho,V)$ is a continuous representation of $\mathcal{G}_K$ with some $L$-algebras morphism $A\rightarrow\End_{\mathcal{G}_K}V$, then $V$ is a finite free $A$-module if and only if $D_{\rig}(V)$ is a finite free $\mathcal{R}_A$-module. The essential image of the functor $D_{\rig}$ is moreover stable under extensions. From these facts we conclude that if $(\rho,V)$ is a continuous representation of $\mathcal{G}_K$ on some finite dimensional $L$-vector space, then the functor $D_{\rig}$ induces an isomorphism of functors
\begin{equation}\label{identifydeforms} \mathfrak{X}_{(\rho,V)}\xrightarrow{D_{\rig}}\mathfrak{X}_{D_{\rig}(V)}. \end{equation}

If moreover $\mathcal{F}$ is a triangulation of $D_{\rig}(V)$ we define $\mathfrak{X}_{(\rho,V),\mathcal{F}[\frac{1}{t}]}$ as the functor from $\mathcal{C}$ to the category of sets sending an object $A$ to the set of isomorphism classes of tuples $(\rho_A,V_A,\pi_A,\mathcal{M}_A)$ where $(\rho_A,V_A)$ is a continuous representation of $\mathcal{G}_K$ on some finite free $A$-module $V_A$, $\pi_A$ is $\mathcal{G}_K$ equivariant $A$-linear map $V_A\rightarrow V$ inducing an isomorphism $L\otimes_AV_A\xrightarrow{\sim}V$ and $\mathcal{M}_A$ is a triangulation of $D_{\rig}(V_A)[\tfrac{1}{t}]$ such that $D_{\rig}(\pi_A)(\mathcal{M}_A)=\mathcal{F}[\tfrac{1}{t}]$.

Introducing the commutative diagram that, with the help of the functor $D_{\rig}$, relates the isomorphism $\beta$ of Theorem \ref{mainlocalthm} to the linear map
\[ \bigoplus_{\mathcal{F}\in\Tri(D_{\rig}(V))}T\mathfrak{X}_{(\rho,V),\mathcal{F}[\frac{1}{t}]}\longrightarrow T\mathfrak{X}_{(\rho,V)} \]
induced by the forgetful functors, we derive the following rephrasing of Theorem \ref{mainlocalthm}

\begin{coro}\label{mainlocalcoro}
Let $(\rho,V)$ be a $L$-linear $\varphi$-generic crystalline representation of the group $\mathcal{G}_K$ with regular Hodge-Tate type. The forgetful functors induce a surjective $L$-linear map
\[ \bigoplus_{\mathcal{F}\in\Tri(D_{\rig}(V))}T\mathfrak{X}_{(\rho,V),\mathcal{F}[\frac{1}{t}]}\longrightarrow T\mathfrak{X}_{(\rho,V)}. \]
\end{coro}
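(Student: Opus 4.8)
The plan is to deduce the corollary from Theorem \ref{mainlocalthm} by transporting everything along the fully faithful functor $D_{\rig}$, so that no genuinely new argument is required. Write $\mathcal{D}\coloneqq D_{\rig}(V)$. First I would record that $\mathcal{D}$ is again a $\varphi$-generic crystalline $(\varphi,\Gamma_K)$-module over $\mathcal{R}_L$ of regular Hodge-Tate type: this is standard from the compatibility of $D_{\rig}$ with $D_{\cris}$ and $D_{\dR}$ — a continuous $L$-representation of $\mathcal{G}_K$ is crystalline (resp.\ de Rham) exactly when the attached $(\varphi,\Gamma_K)$-module is, and the comparison isomorphisms identify Frobenius eigenvalues and Hodge-Tate weights — so $\varphi$-genericity and regularity of the Hodge-Tate type carry over. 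In particular $\Tri(D_{\rig}(V))$ is precisely the index set occurring in Theorem \ref{mainlocalthm}.

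Next I would upgrade the isomorphism of deformation functors $\mathfrak{X}_{(\rho,V)}\xrightarrow{\sim}\mathfrak{X}_{\mathcal{D}}$ of \eqref{identifydeforms} to isomorphisms $\mathfrak{X}_{(\rho,V),\mathcal{F}[\tfrac{1}{t}]}\xrightarrow{\sim}\mathfrak{X}_{\mathcal{D},\mathcal{F}[\tfrac{1}{t}]}$, one for each $\mathcal{F}\in\Tri(\mathcal{D})$, compatible with the forgetful maps down to $\mathfrak{X}_{(\rho,V)}$ and $\mathfrak{X}_{\mathcal{D}}$ respectively. The map sends a tuple $(\rho_A,V_A,\pi_A,\mathcal{M}_A)$ to $(D_{\rig}(V_A),D_{\rig}(\pi_A),\mathcal{M}_A)$, leaving the triangulation of $D_{\rig}(V_A)[\tfrac{1}{t}]$ untouched. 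That this is bijective on $A$-points for every $A$ in $\mathcal{C}$ is the only point requiring a word of justification: since $A$ is local Artinian with residue field $L$, any $(\varphi,\Gamma_K)$-module $\mathcal{D}_A$ over $\mathcal{R}_A$ lifting $\mathcal{D}$ is an iterated extension of copies of $\mathcal{D}=D_{\rig}(V)$ along the $\mathfrak{m}_A$-adic filtration, hence lies in the essential image of $D_{\rig}$ (which is stable under extensions); full faithfulness of $D_{\rig}$ then recovers the morphism $\pi_A$, and the triangulation datum $\mathcal{M}_A$ is literally the same object on both sides. Functoriality in $A$ is clear, so these are isomorphisms of functors.

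Finally I would evaluate on $L[\varepsilon]$: the isomorphisms above fit into a commutative square identifying the $L$-linear map in the statement of the corollary with the map $\bigoplus_{\mathcal{F}\in\Tri(\mathcal{D})}T\mathfrak{X}_{\mathcal{D},\mathcal{F}[\tfrac{1}{t}]}\to T\mathfrak{X}_{\mathcal{D}}$ of Theorem \ref{mainlocalthm}; the latter being surjective, so is the former. The proof thus has no real obstacle — all the substance sits in Theorem \ref{mainlocalthm}, and ultimately in Theorem \ref{enveloppeBorels} — and the one place to be slightly careful is to make the identification $\mathfrak{X}_{(\rho,V),\mathcal{F}[\tfrac{1}{t}]}\cong\mathfrak{X}_{\mathcal{D},\mathcal{F}[\tfrac{1}{t}]}$ compatible simultaneously with the forgetful maps and with varying $\mathcal{F}$, so that passing to the direct sum over all triangulations is legitimate; this is immediate from the functoriality of $D_{\rig}$.
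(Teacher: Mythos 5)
Your proposal is correct and follows essentially the same route as the paper: the paper likewise reduces the corollary to Theorem \ref{mainlocalthm} via the identification \eqref{identifydeforms} coming from the full faithfulness of $D_{\rig}$, the stability of its essential image under extensions (which handles Artinian deformations exactly as in your iterated-extension argument), and the observation that the triangulation datum $\mathcal{M}_A$ of $D_{\rig}(V_A)[\tfrac{1}{t}]$ is literally shared by both sides, so the forgetful maps match and the tangent-space map is identified with the surjection $\beta$. The only difference is that the paper leaves these compatibilities as a one-line remark, whereas you spell them out.
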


\subsection{Components of the non saturated deformation ring}\label{reminderqtriangulinecomponents}

This section contains some complements about the geometry of the formal scheme $\mathfrak{X}_{(\rho,V),\mathcal{F}[\frac{1}{t}]}$ that will be useful in the next chapter.

Let $(\rho,V)$ be some $n$-dimensional $L$-linear $\varphi$-generic crystalline representation of $\mathcal{G}_K$ with regular Hodge-Tate type. Let $\mathcal{D}=D_{\rig}(V)$ and let $\mathcal{F}$ be a triangulation of $\mathcal{D}$. We note $\mathcal{M}\coloneqq \mathcal{F}[\tfrac{1}{t}]$. Let $W_{\dR}(\mathcal{D})\coloneqq \BdR\otimes_{\BdR^+}W_{\dR}^+(\mathcal{D})$.

We fix a basis of $V$, i.e.~an $L$-linear isomorphism $\iota:\,L^n\simeq V$ so that $\rho$ can be identified with a group homomorphism $\rho:\,\mathcal{G}_K\rightarrow\GL_n(L)$. Let $\mathfrak{X}_\rho$ be the deformation functor of the pair $(\rho,\iota)$. Using the identification $(\ref{identifydeforms})$ we can define the deformation functors $\mathfrak{X}_{\rho,\mathcal{F}}$, $\mathfrak{X}_{\rho,\mathcal{M}}$, $\mathfrak{X}_{\rho}^{\cris}$. These are the obvious variants of the above functors in the context of $(\varphi,\Gamma_K)$-modules with the corresponding decorations.

Let $F\coloneqq D_{\cris}(\mathcal{F})$ the complete flag of $D_{\cris}(\mathcal{D})$ associated to the triangulation $\mathcal{F}$. For each $\tau\in\Sigma$, let $F_{\tau}$ be the complete flag of $D_{\dR,\tau}(D)$ image of $F$ under the functor $(-)\otimes_{L\otimes_{\Qp}K_0,\tau}L$. The stabilizer $B_{F,\tau}$ of $F_{\tau}$ is a Borel subgroup of $\GL(D_{\dR,\tau}(\mathcal{D}))$ and there exists a unique $w_{F,\tau}\in\mathfrak{S}_n$ such that $\Fil_{\dR,\tau}\in B_{F,\tau}w_{F,\tau}(F_{\tau})$. We define $w_{\mathcal{F}}\coloneqq (w_{F,\tau})_{\tau\in\Sigma}\in W=(\mathfrak{S}_n)^{[K:\Qp]}$.

It follows from \cite[Thm.~3.6.2.(ii)]{BHS3} and \cite[Prop.~3.6.4.]{BHS3} that the deformation functor $\mathfrak{X}_{\rho,\mathcal{M}}$ is pro-represented by some complete noetherian $L$-algebra $R_{\rho,\mathcal{M}}$ which is reduced, Cohen-Macaulay and equidimensional of dimension
\[ n^2+[K:\Qp]\frac{n(n+1)}{2}.\]
Its minimal primes are indexed by the set $\{ w\in W, \, w\geq w_{\mathcal{F}}\}$ where the ordering on $W$ is the Bruhat ordering. 

Let $R_{\rho,\mathcal{M}}^{w}$ be the quotient of $R_{\rho,\mathcal{M}}$ by the minimal prime with index $w$. By \cite[Thm.~3.6.2.(ii)]{BHS3}, it is Cohen-Macaulay and normal. 
Let $\mathfrak{X}_{(\rho,V),\mathcal{M}}^w$ be the subfunctor of $\mathfrak{X}_{(\rho,V),\mathcal{M}}$ defined as the image of
\begin{equation}\label{defRw} \Spf R_{\rho,\mathcal{M}}^{w}\subset \Spf R_{\rho,\mathcal{M}}\longrightarrow \mathfrak{X}_{\rho,\mathcal{M}}.\end{equation}
It can be easily checked that the inclusion $\mathfrak{X}_{(\rho,V),\mathcal{M}}^w\subset\mathfrak{X}_{(\rho,V),\mathcal{M}}$ is relatively representable. Note that the definition of $\mathfrak{X}_{\rho,\mathcal{M}}^w$ does not depend on the choice of the basis of the $L$-vector space $V$.

Let $\mathfrak{t}$ be the diagonal torus of $\mathfrak{g}=\mathfrak{gl}_{n,K}$. 
We recall that there is a canonical map $\tilde{\mathfrak{g}}\rightarrow \mathfrak{t}$ mapping $(A,gB)\in \tilde{\mathfrak{g}}$ to the class of $\mathrm{Ad}(g^{-1})A$ in $\mathfrak{b}/\mathfrak{u}$. Here $\mathfrak{u}\subset \mathfrak{b}$ is the sub-Lie-algebra of nilpotent upper triangular matrices, and the quotient $\mathfrak{b}/\mathfrak{u}$ is canonically identified with $\mathfrak{t}$.

This projection induces a canonical map $\Theta$ from $\mathfrak{X}_{(\rho,V),\mathcal{M}}$ to the completion at $(0,0)$ of the $L$-scheme $\mathfrak{t}_{K/\Qp,L}\times_{\mathfrak{t}_{K/\Qp,L}/W}\mathfrak{t}_{K/\Qp,L}$. The irreducible components of this scheme are in bijection with the group $W$. Let $\mathfrak{t}_w$ be the component defined by $\{(t,\mathrm{Ad}(w^{-1})t),\, t\in\mathfrak{t}_{K/\Qp,L}\}$ and $\widehat{\mathfrak{t}_w}$ its completion at $(0,0)$. We recall the ensuing characterization of $\mathfrak{X}_{(\rho,V),\mathcal{M}}^w$ which follows the precise definition of $\Theta$, as discussed in \cite[Cor.~3.5.12]{BHS3}.
\begin{prop}\label{mapdoubleweight}
Let $w_1$ and $w_2$ two elements of $\{w\in W,\, w\geq w_\mathcal{F}\}$. We have $\Theta(\mathfrak{X}_{(\rho,\mathcal{F}),\mathcal{M}}^{w_1})\subset\widehat{\mathfrak{t}_{w_2}}$ if and only if $w_1=w_2$.
\end{prop}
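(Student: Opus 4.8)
The plan is to transport the statement, through the identifications recalled in this section, into an assertion about the local model $X=\gtilde\times_{\mathfrak{g}}\gtilde$, and in effect to recover it from \cite[Cor.~3.5.12]{BHS3}. After adjoining framings — a formally smooth operation which is compatible with $\Theta$ via the square \eqref{diagramm1} — the ring $R_{\rho,\mathcal{M}}$ is, up to a formally smooth factor, the completion of $X_{K/\Qp,L}$ at the point $x_{\mathcal{M}}$; the minimal primes indexed by $\{w\geq w_{\mathcal{F}}\}$ correspond to the irreducible components of $X_{K/\Qp,L}$ passing through $x_{\mathcal{M}}$, so that $\mathfrak{X}_{(\rho,V),\mathcal{M}}^{w}$ becomes the completion at $x_{\mathcal{M}}$ of such a component $X_w$; and $\Theta$ is induced by the morphism $X\to\mathfrak{t}\times_{\mathfrak{t}/W}\mathfrak{t}$, $(g_1B,A,g_2B)\mapsto\bigl(\overline{\mathrm{Ad}(g_1^{-1})A},\overline{\mathrm{Ad}(g_2^{-1})A}\bigr)$, Weil-restricted from $K$ to $\Qp$ and base-changed to $L$. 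By \eqref{split} all objects in play split as products over $\tau\in\Sigma$, so one reduces at once to a single copy of $\GL_n$ over $L$. It then suffices to show that $X_{w_1}$ maps into $\mathfrak{t}_{w_2}$, after completion at $x_{\mathcal{M}}$, exactly when $w_1=w_2$.

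For the implication $w_1=w_2\Rightarrow{}$inclusion I would work on the dense open $Z_w\subset X_w$ consisting of triples $(g_1B,A,g_2B)$ with $A$ regular semisimple and the two flags in relative position $w$: both flags are then $A$-stable, and if $t\in\mathfrak{t}=\mathfrak{b}/\mathfrak{u}$ denotes the image of $\mathrm{Ad}(g_1^{-1})A$, then conjugating the diagonalisation of $A$ by a permutation matrix shows that the image of $\mathrm{Ad}(g_2^{-1})A$ is $\mathrm{Ad}(w^{-1})t$. Hence $Z_w$, and therefore its closure $X_w$, maps into $\mathfrak{t}_w=\{(t,\mathrm{Ad}(w^{-1})t)\}$; since $x_{\mathcal{M}}$ lies on $X_w$ whenever $w\geq w_{\mathcal{F}}$, passing to completions yields $\Theta(\mathfrak{X}_{(\rho,V),\mathcal{M}}^{w})\subset\widehat{\mathfrak{t}_w}$.

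For the converse, observe that the element $t$ above is regular since $A$ is regular semisimple, so $(t,\mathrm{Ad}(w^{-1})t)\in\mathfrak{t}_{w'}$ forces $\mathrm{Ad}(w'w^{-1})t=t$, i.e. $w'=w$; hence the image of $X_{w_1}$ is not contained in $\mathfrak{t}_{w_2}$ for $w_2\neq w_1$. To upgrade this to the completion at $x_{\mathcal{M}}$ I would use that $R_{\rho,\mathcal{M}}^{w}$ is a normal domain (recalled above) together with the structural input from \cite{BHS3} that $\Theta$ restricted to $\mathfrak{X}_{(\rho,V),\mathcal{M}}^{w}$ is dominant — indeed flat — onto $\widehat{\mathfrak{t}_w}$, equivalently that the morphism $\Spf R_{\rho,\mathcal{M}}^{w}\to\widehat{\mathfrak{t}_w}$ corresponds to an injective ring map. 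Granting this, suppose $\Theta(\mathfrak{X}_{(\rho,V),\mathcal{M}}^{w_1})\subset\widehat{\mathfrak{t}_{w_2}}$ with $w_1\neq w_2$; combined with the inclusion in $\widehat{\mathfrak{t}_{w_1}}$ just shown, $\Theta$ would factor, on the $w_1$-component, through $\widehat{\mathfrak{t}_{w_1}}\cap\widehat{\mathfrak{t}_{w_2}}$, so the ideal cutting out $\widehat{\mathfrak{t}_{w_2}}$ would die in $R_{\rho,\mathcal{M}}^{w_1}$. But that ideal is already nonzero when restricted to $\widehat{\mathfrak{t}_{w_1}}$ — it defines the proper closed formal subscheme $\widehat{\mathfrak{t}_{w_1}}\cap\widehat{\mathfrak{t}_{w_2}}\subsetneq\widehat{\mathfrak{t}_{w_1}}$, proper since $\mathfrak{t}_{w_1}$ and $\mathfrak{t}_{w_2}$ are distinct irreducible components and neither contains the other — and it stays nonzero in $R_{\rho,\mathcal{M}}^{w_1}$ by injectivity, a contradiction.

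The delicate point is precisely the passage, in the previous paragraph, from the generic behaviour of the global components $X_w$ to the complete local ring at the possibly very special point $x_{\mathcal{M}}$: a priori the formal neighbourhood of $x_{\mathcal{M}}$ need not detect the generic relative position of the two flags, and it is here that the fine geometry of the local model of \cite{BHS3} — reducedness, Cohen–Macaulayness and normality of $R_{\rho,\mathcal{M}}$ and of its components, and flatness of $\Theta$ over the weight space — has to be invoked. Modulo those inputs the proposition is little more than an unwinding of \cite[Cor.~3.5.12]{BHS3} through the identifications of the present section, which is why we state it here as a recollection.
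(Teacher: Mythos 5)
Your proposal is correct and follows essentially the same route as the paper: the paper states this proposition as a recollection that ``follows the precise definition of $\Theta$'' via \cite[Cor.~3.5.12]{BHS3}, and your argument is exactly an unwinding of that — transporting to the local model $\gtilde\times_{\mathfrak{g}}\gtilde$, identifying the components $X_w$ and their images in $\mathfrak{t}\times_{\mathfrak{t}/W}\mathfrak{t}$, and using the same BHS3 input (that $\Theta$ carries $\Spf R^{w}_{\rho,\mathcal{M}}$ onto, or at least densely into, $\widehat{\mathfrak{t}_w}$) to pass from generic relative position to the complete local ring at $x_{\mathcal{M}}$. Note that for the converse you only need dominance (hence injectivity of $B_w\rightarrow R^{w}_{\rho,\mathcal{M}}$, since $B_w$ is a domain), so the parenthetical appeal to flatness is harmless but not required.
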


Finally we recall that the functor $\mathfrak{X}_{\rho}^{\cris}$ is pro-representable by a formally smooth $L$-algebra of dimension $n^2+[K:\Qp]\tfrac{n(n-1)}{2}$ (\cite{Kisindef}). It follow from the proof of \cite[Theorem 4.2.3]{BHS3} that we have
\[ \mathfrak{X}_{(\rho,V)}^{\cris}\subset \mathfrak{X}_{(\rho,V),\mathcal{M}}^{w_0} \]
as subfunctors of $\mathfrak{X}_{(\rho,V)}$ where, consistently with a well established notation introduced in Chapter \ref{intersection} above, $w_0$ stands for the longest element of $W$.

\section{Global deformation rings}

Let $F$ be a totally real field and $E$ a totally imaginary quadratic extension that we assume to be unramified over $F$ and such that all places $v$ dividing $p$ are split in $E$. Let $G$ be a unitary group in $n$ variables defined over $F$ such that $G\times_F E$ is an inner form of $\GL_{n,E}$. We assume moreover that $G(F\otimes_{\Q}\R)$ is compact and that the group $G$ is quasi-split over all finite places of $F$. This implies that $n$ is odd or that $4|n[F:\Q]$. If $v$ is a place of $F$ which splits in $E$, the group $G$ splits at $v$. We fix a place $\vtilde$ of $E$ dividing $v$ and an isomorphism $G\times_FE_{\vtilde}\cong \GL_{n,E_{\vtilde}}$ which induces an isomorphism $G\times_F F_v\simeq\GL_{n,F_v}$. Let $B_v\subset G(F_v)$ be the subgroup corresponding to the Borel subgroup of upper triangular matrices of $\GL_n(F_v)$ under this isomorphism and $T_v\subset B_v$ the subgroup corresponding to the subgroup of diagonal matrices in $\GL_n(F_v)$. We write $T=\prod_{v|p}T_v$ and $B_p=\prod_{v|p}B_v$. Moreover we define $U_v\subset G(F_v)$ the maximal compact subgroup of $G(F_v)$ corresponding to $\GL_n(\mathcal{O}_{F_v})$ under this isomorphism.

Let $U^p$ be a compact open subgroup of $G(\mathbb{A}^{p,\infty})$ of the form $\prod_{v\nmid p} U_v$ with $U_v$ a compact open subgroup of $G(F_v)$ which is assumed to be hyperspecial when $v$ is a place of $F$ which is inert in $E$. 
Let $S_p$ denote the set of places of $F$ that divide $p$ and let $S$ be a finite set of places of $F$ containing $S_p$ and the finite set of places of $F$ for which $U_v$ is not hyperspecial.
Finally we write $U=U^p\times U_p$, where $U_p=\prod_{v|p}U_v$ is a maximal compact subgroup of $G(F\otimes_{\Q}\Q_p)$.

We write $E_S$ for the maximal extension of $E$ that is unramified outside all places of $E$ above the places in $S$ and denote by $\mathcal{G}_{E,S}=\mathrm{Gal}(E_S/E)$ the corresponding Galois group. Let $A$ be a $\Zp$-algebra and $\rho_A$ a representation of $\mathcal{G}_{E,S}$ on some finite free $A$-module $V_A$ of rank $n$. We write $\rho_A^c$ for the representation $g\mapsto \rho_A(cgc)$, where $c\in \mathrm{Gal}(\overline{F}/F)$ is a complex conjugation. The representation $(\rho_A,V_A)$ is called \emph{polarizable}, if there exists an isomorphism
\[({\rho}^{\vee,c},V_A^\vee)\cong({\rho}\otimes\epsilon^{n-1},V_A),\]
where $\epsilon$ is the cyclotomic character. Such an isomorphism is called \emph{polarization}.

We fix $L$ a finite extension of $\Qp$ and $(\overline{\rho},\overline{V})$ a continuous polarized representation $\mathcal{G}_{E,S}\rightarrow\GL_n(k_L)$ which is absolutely irreducible so that it has a unique polarization up to scalar multiplication. We denote by $R_{\overline{\rho},S}$ the universal polarized deformation $\mathcal{O}_L$-algebra of $\overline{\rho}$. 
That is, the complete local $\mathcal{O}_L$-algebra pro-representing the functor of isomorphism classes of triples $(\rho_A,V_A,\iota_A)$, with $V_A$ a finite free $A$-module with a continuous polarized action $\rho_A$ of $\mathcal{G}_{E,S}$ and an isomorphism $\iota_A:V_A/\mathfrak{m}_AV_A\cong \overline{V}$ of $\mathcal{G}_{E,S}$-representations, on the category of local Artinian $\mathcal{O}_L$-algebras $A$ with residue field $k_L$. The existence of the $\mathcal{O}_L$-algebra $R_{\overline{\rho},S}$ follows from \cite[\S1.1]{Chefougere}.

Let $\mathcal{X}_{\overline{\rho},S}=(\Spf R_{\overline{\rho},S})^{\rig}$ be the rigid analytic generic fiber of the formal scheme $\Spf R_{\overline{\rho},S}$. As $(\overline{\rho},\overline{V})$ is absolutely irreducible, the $L$-points of $\mathcal{X}_{\overline{\rho},S}$ are in bijection with the set of isomorphism classes of continuous representations $(\rho,V)$ of $\mathcal{G}_{E,S}$ on $L$-vector spaces such that $\rho^{\vee,c}\simeq\rho\otimes \varepsilon^{n-1}$ and such that there exists a $\mathcal{G}_{E,S}$-stable $\mathcal{O}_L$-lattice $V^\circ\subset V$ and a $\mathcal{G}_{E,S}$-equivariant isomorphism $V^\circ/\varpi_L V^\circ\simeq\overline{V}$. Given a point $x\in\mathcal{X}_{\overline{\rho}}$, we denote by $(\rho_x,V_x)$ the associated representation of $\mathcal{G}_{E,S}$.

Fix an isomorphism $\iota:\overline{\Qp}\simeq\C$. Recall that, if $\pi$ is a (cuspidal) automorphic representation of $G$, there exists a unique polarized $n$-dimensional $\overline{\Qp}$-representation $(\rho_{\pi},V_{\pi})$ of $\mathrm{Gal}(\overline{E}/E)$ associated to $\pi$. If $(\pi^{p,\infty})^{U^p}\neq 0$ then this representation factors through $\mathcal{G}_{E,S}$. The existence of this Galois representation is a consequence of base change (\cite[Cor.~5.3]{Labesse}) and of the construction of Galois representations associated to some automorphic representation of $\GL_{n,E}$ (see \cite{CheHaII}). We say that a point $x\in\mathcal{X}_{\overline{\rho},S}(L)$ is $(G,U^p)$-automorphic (resp.~$(G,U)$-automorphic) if there exists a (cuspidal) automorphic representation $\pi$ of $G$ such that $(\pi^{p,\infty})^{U^p}\neq 0$ (resp.~such that~$(\pi^\infty)^U\neq 0$) and such that there is an isomorphism $(\rho_x,V_x\otimes_L\overline{\Qp})\simeq(\rho_{\pi},V_{\pi})$. Moreover we say that $(\overline{\rho},\overline{V})$ is $(G,U)$-automorphic over $L$ if there exists a $(G,U)$-automorphic point $x\in\mathcal{X}_{\overline{\rho},S}(L)$. Let $\mathcal{X}_{\overline{\rho},S}^{\aut}$ be the Zariski closure of the set of $(G,U)$-automorphic points in $\mathcal{X}_{\overline{\rho},S}^{\aut}$. 
The aim of this section is to prove the following theorem:

\begin{theo}\label{globalmaintheorem}
Assume that $p>2$, that all places of $S$ are split in $E$ and that the group $\overline{\rho}(\mathcal{G}_{E(\zeta_p)})$ is adequate in the sense of \cite[Definition 2.3]{ThorneAut}. Then the inclusion $\mathcal{X}_{\overline{\rho},S}^{\aut}\subset\mathcal{X}_{\overline{\rho},S}$ is the inclusion of a union of irreducible components (possibly empty if $(\overline{\rho},\overline{V})$ is not $(G,U)$-automorphic). 
\end{theo}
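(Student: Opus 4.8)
The strategy, following \cite{Chefougere} and \cite{BHS3}, is to show that \emph{every} irreducible component $Z$ of $\mathcal{X}_{\overline{\rho},S}^{\aut}$ is an irreducible component of $\mathcal{X}_{\overline{\rho},S}$; since $\mathcal{X}_{\overline{\rho},S}^{\aut}$ is Zariski closed this is equivalent to the statement of the theorem, and it is vacuous when $\mathcal{X}_{\overline{\rho},S}^{\aut}=\emptyset$. The plan is to exhibit on each such $Z$ a closed point $\rho$ which is (i) a smooth point of $\mathcal{X}_{\overline{\rho},S}^{\aut}$ lying on no component but $Z$, (ii) a smooth point of $\mathcal{X}_{\overline{\rho},S}$, and (iii) such that $T_\rho\mathcal{X}_{\overline{\rho},S}^{\aut}=T_\rho\mathcal{X}_{\overline{\rho},S}$. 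Granting this, by (ii) the space $\mathcal{X}_{\overline{\rho},S}$ is locally irreducible at $\rho$ with $\dim_\rho\mathcal{X}_{\overline{\rho},S}=\dim_L T_\rho\mathcal{X}_{\overline{\rho},S}$, whereas by (i) the component $Z$ coincides with $\mathcal{X}_{\overline{\rho},S}^{\aut}$ near $\rho$ and is smooth there with $\dim_\rho Z=\dim_L T_\rho\mathcal{X}_{\overline{\rho},S}^{\aut}$; by (iii) these two dimensions agree, and since $Z$ is a closed irreducible subset of $\mathcal{X}_{\overline{\rho},S}$ passing through $\rho$ of the same local dimension as the regular local ring $\mathcal{O}_{\mathcal{X}_{\overline{\rho},S},\rho}$, it must coincide with the unique irreducible component of $\mathcal{X}_{\overline{\rho},S}$ through $\rho$.

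For the choice of $\rho$ I would use the eigenvariety $Y=Y(U^p,\overline{\rho})$ and the morphism $\xi\colon Y\to\mathcal{X}_{\overline{\rho},S}$ induced by its family of Galois representations. A classical point of $Y$ of regular weight, with unramified local component at $p$, and with $\varphi$-generic crystalline Frobenius at each place $v\mid p$, maps to a $(G,U)$-automorphic point of $\mathcal{X}_{\overline{\rho},S}$ whose associated Galois representation is crystalline, of regular Hodge--Tate type, and $\varphi$-generic at each $v\mid p$; for such a point \cite{Allen1} applies and $\mathcal{X}_{\overline{\rho},S}$ is smooth there. By density of automorphic points in $Y$ for definite unitary groups (as in \cite{Chefougere}, \cite{BHS3}) these points are Zariski dense in every irreducible component of $Y$, and since the $(G,U)$-automorphic points are Zariski dense in $\xi(Y)$ one has $\mathcal{X}_{\overline{\rho},S}^{\aut}=\overline{\xi(Y)}$; hence such points are Zariski dense in every component $Z$ of $\mathcal{X}_{\overline{\rho},S}^{\aut}$. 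Intersecting, inside $Z$, the set of these points with the dense open locus of smooth points of $\mathcal{X}_{\overline{\rho},S}^{\aut}$ lying on $Z$ alone, I pick $\rho$ in the intersection; then (i) and (ii) hold. For (iii): since $\xi(Y)$ is contained in the closed subspace $\mathcal{X}_{\overline{\rho},S}^{\aut}$, the morphism $\xi$ factors through $\mathcal{X}_{\overline{\rho},S}^{\aut}\hookrightarrow\mathcal{X}_{\overline{\rho},S}$, so for every preimage $x\in Y$ of $\rho$ the canonical map $T_xY\to T_\rho\mathcal{X}_{\overline{\rho},S}$ factors through $T_\rho\mathcal{X}_{\overline{\rho},S}^{\aut}$; as this last space injects into $T_\rho\mathcal{X}_{\overline{\rho},S}$, equality (iii) follows once one shows that $\bigoplus_{x\mapsto\rho}T_xY\to T_\rho\mathcal{X}_{\overline{\rho},S}$ is surjective — the Zariski density of automorphic points in $Y$ being exactly what makes this factorization legitimate, as in \cite{Chefougere}.

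This last surjectivity is where the local results of Section \ref{intersection} enter. Write $\rho_v:=\rho|_{\mathcal{G}_{E_{\vtilde}}}$ for $v\mid p$, let $\mathfrak{X}_\rho$ be the polarized global deformation functor of $\rho$, so that $T\mathfrak{X}_\rho=T_\rho\mathcal{X}_{\overline{\rho},S}$, let $\mathfrak{X}_{\rho_v}$ be the local deformation functor of $\rho_v$, and for a tuple $\mathcal{F}=(\mathcal{F}_v)_{v\mid p}$ of triangulations of the $(\varphi,\Gamma_{E_{\vtilde}})$-modules $D_{\rig}(\rho_v)$ set $\mathfrak{X}_\rho^{\mathcal{F}}:=\mathfrak{X}_\rho\times_{\prod_{v\mid p}\mathfrak{X}_{\rho_v}}\prod_{v\mid p}\mathfrak{X}_{\rho_v,\mathcal{F}_v[\frac{1}{t}]}$, the functor of polarized deformations of $\rho$ that are trianguline at each $v\mid p$ along $\mathcal{F}_v[\frac{1}{t}]$; by Proposition \ref{genericfiltrations} applied at each $v$ this is relatively representable over $\mathfrak{X}_\rho$. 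By the local model of the space of trianguline representations of \cite{BHS3}, for $\rho$ as above the preimages $x$ of $\rho$ in $Y$ are in bijection with the tuples $\mathcal{F}$ — equivalently, by $\varphi$-genericity, with $\prod_{v\mid p}$ of the $n!$ refinements of the corresponding isocrystals — and the image of $T_xY\to T_\rho\mathcal{X}_{\overline{\rho},S}$ is exactly $T\mathfrak{X}_\rho^{\mathcal{F}_x}$; hence it suffices to prove $\sum_{\mathcal{F}}T\mathfrak{X}_\rho^{\mathcal{F}}=T\mathfrak{X}_\rho$. Now the restriction map $T\mathfrak{X}_\rho\to\prod_{v\mid p}T\mathfrak{X}_{\rho_v}$ is surjective — this is the point where the adequacy of $\overline{\rho}(\mathcal{G}_{E(\zeta_p)})$ and the Taylor--Wiles--Kisin patching of \cite{BHS3} are used, exactly as in \cite{Chefougere} (equivalently, the patched deformation ring is formally smooth over the completed tensor product of the local deformation rings at $p$) — so a diagram chase with the fibre-product description of $\mathfrak{X}_\rho^{\mathcal{F}}$ reduces the identity to showing, separately at each $v\mid p$, that $\bigoplus_{\mathcal{F}_v}T\mathfrak{X}_{\rho_v,\mathcal{F}_v[\frac{1}{t}]}\to T\mathfrak{X}_{\rho_v}$ is surjective. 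Since $\rho_v$ is a $\varphi$-generic crystalline representation of $\mathcal{G}_{E_{\vtilde}}$ of regular Hodge--Tate type, this is precisely Corollary \ref{mainlocalcoro}, which gives (iii) and hence the theorem.

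I expect the density statements of the second paragraph and the closed-immersion factorization to be routine, though the density of suitable points in \emph{every} component (rather than just somewhere) needs some care. The real work is in the third paragraph: the local model of the eigenvariety at a classical point that may be \emph{critical}, and the control of the restriction map at $p$ through patching. The one genuinely new ingredient — and the reason the whole argument goes through with no genericity or non-criticality hypothesis on the refinements at $p$, in contrast to \cite{Chefougere} where all refinements had to be assumed non-critical — is the local tangent-space surjectivity of Corollary \ref{mainlocalcoro}, which in the end rests on the identity $\mathfrak{b}=\sum_{w\in\mathfrak{S}_n}\mathfrak{b}\cap\mathfrak{b}_{w}$ of Theorem \ref{enveloppeBorels}.
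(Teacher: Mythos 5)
Your overall skeleton (pick on each component of $\mathcal{X}_{\overline{\rho},S}^{\aut}$ a smooth, $(G,U)$-automorphic, crystalline $\varphi$-generic point $\rho$ via the accumulation property of the eigenvariety, and prove $T_\rho\mathcal{X}_{\overline{\rho},S}^{\aut}=T_\rho\mathcal{X}_{\overline{\rho},S}$ using the local surjectivity of Corollary \ref{mainlocalcoro}) is indeed the strategy of the paper, and your first two paragraphs are essentially sound. The gaps are in your third paragraph, i.e.\ in the actual proof of the tangent-space equality. First, the fibre of $Y(U^p,\overline{\rho})\rightarrow\mathcal{X}_{\overline{\rho},S}$ over $\rho$ is \emph{not} in bijection with the tuples $\underline{\mathcal{F}}$ of refinements: in the critical case it consists of all companion points $x_{\underline{\mathcal{F}},w}$ with $w\succeq w_{\underline{\mathcal{F}}}$ (Proposition \ref{propertiesofeigenvar} (iii), from \cite{BHS3}), and the complete local ring of the eigenvariety at one such point only sees the single irreducible component $R^{w}_{\overline{\rho}_{\vtilde},\mathcal{F}_v}$ of the quasi-trianguline deformation ring, not the whole functor $\mathfrak{X}_{\rho_v,\mathcal{F}_v[\frac{1}{t}]}$. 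So your claim that the image of $T_xY\rightarrow T_\rho\mathcal{X}_{\overline{\rho},S}$ equals $T\mathfrak{X}_\rho^{\mathcal{F}_x}$ is unjustified; even the sum over all companion points need not exhaust the tangent space of the (singular) union of components. In fact the paper never proves that $\bigoplus_x T_xY\rightarrow T_\rho\mathcal{X}_{\overline{\rho},S}$ is surjective; it proves the weaker statement $\bigoplus_{\underline{\mathcal{F}}}t_{S_{\underline{\mathcal{F}}}}\twoheadrightarrow t_S$ (Corollary \ref{globaltangentspacesurjection}), where $S_{\underline{\mathcal{F}}}$ is the scheme-theoretic image of \emph{all} the companion-point germs, and the bulk of the work (Lemmas \ref{lemmagenericpoints}--\ref{lemmaSelmergroup}, Corollary \ref{directsumdecompo}: flatness of the weight map, the fact that $\Spec S_{\underline{\mathcal{F}}}$ is cut out by $q$ equations in the Cohen--Macaulay $\Spec R_{\underline{\mathcal{F}}}$, and Allen's vanishing $t_{R_{\cris}}\cap t_S=0$ from \cite{Allen1}) goes into showing that these images are big enough, namely $t_{R_{\underline{\mathcal{F}}}}=t_{R_{\cris}}\oplus t_{S_{\underline{\mathcal{F}}}}$.

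Second, and more seriously, your reduction of $\sum_{\underline{\mathcal{F}}}T\mathfrak{X}_\rho^{\underline{\mathcal{F}}}=T\mathfrak{X}_\rho$ to Corollary \ref{mainlocalcoro} rests on the surjectivity of the restriction map $T\mathfrak{X}_\rho\rightarrow\prod_{v\mid p}T\mathfrak{X}_{\rho_v}$, and this is false for $n\geq 2$ for dimension reasons: at your smooth point $\dim T\mathfrak{X}_\rho=[F:\Q]\tfrac{n(n+1)}{2}$, while $\sum_{v\mid p}\dim H^1(\mathcal{G}_{E_{\vtilde}},\mathrm{ad}\,\rho_{\vtilde})\geq n^2[F:\Q]$ by the local Euler characteristic formula. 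Formal smoothness of the \emph{patched} ring $R_\infty$ over the local deformation rings at $p$ does not descend to the global deformation ring, which is a further quotient of $R_\infty$ by $q$ equations; this is exactly why the paper applies the local surjectivity at the patched level ($\bigoplus_{\underline{\mathcal{F}}}t_{R_{\underline{\mathcal{F}}}}\twoheadrightarrow t_R$) and then descends to $t_S$ through the codimension-$q$ count of Lemma \ref{lemmaqequations} together with Allen's Selmer-group vanishing, rather than through a global-to-local surjection. Without these ingredients (none of which appear in your outline beyond the citation of \cite{Allen1} for smoothness) the diagram chase you propose does not close. Two minor points: the paper's final argument only needs smoothness of $\mathcal{X}_{\overline{\rho},S}^{\aut}$ at $\rho$, not of $\mathcal{X}_{\overline{\rho},S}$; and you do not address the reduction to a neat level $U^p$ satisfying \eqref{net}, which the paper handles by adding an auxiliary split place.
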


From now on we assume $p>2$, the places of $S$ split in $E$, $\overline{\rho}(\mathcal{G}_{E(\zeta_p)})$ adequate and that $(\overline{\rho}, \overline{V})$ is $(G,U)$-automorphic.

Recall that, for a place $v\in S$, we fix a place $\tilde v$ of $E$ dividing $v$. We write $\mathcal{G}_{E_{\tilde{v}}}$ for the choice of a decomposition group at $\tilde{v}$. Given a representation $\rho$ of $\mathcal{G}_{E,S}$ we write $\rho_{\tilde{v}}$ for the restriction of $\rho$ to $\mathcal{G}_{E_{\tilde{v}}}$. 

Finally we assume that $U^p$ is sufficiently small so that the compact open sub group $U\coloneqq \prod_{v}U_v$ is such that
\begin{equation}\label{net} \forall g\in G(\mathbb{A}_F^\infty),\, G(F)\cap g Ug^{-1}=\{1\}.\end{equation}

\subsection{Recollections about eigenvarieties and patching}
Attached to the data $G, U^p$ and $\overline{\rho}$ there is a so-called \emph{eigenvariety}. For a place $v$ dividing $p$ let us write $\hat T_v$ for the rigid analytic space of continuous characters of $T_v$ and similarly $\hat T^0_v$ for the space of continuous characters of the maximal compact subgroup $T^0_v\subset T_v$. Further let 
\[\hat T=\prod_{v|p} \hat T_v \hspace{1cm} \text{and}\hspace{1cm} \hat T^0=\prod_{v|p} \hat T_v^0.\]
The eigenvariety associated to $G, U^p$ and $\overline{\rho}$ is by definition the Zariski-closed rigid analytic subspace $Y(U^p,\overline{\rho})\subset \mathcal{X}_{\overline{\rho},S}\times \hat T_L$ that is the (scheme-theoretic) support of the locally analytic Jacquet-module $J_{B_p}( \hat S(U^p,L)^{\an}_\mathfrak{m})$ of the locally analytic representation underlying the $G(F\otimes_\Q\Q_p)$-representation on the space $\hat S(U^p,L)_{\mathfrak{m}}$ of $p$-adic automorphic forms of tame level $U^p$. 
Here $\mathfrak{m}$ is a certain maximal ideal of a Hecke-algebra corresponding to the residual Galois representation $\overline{\rho}$.
We refer to \cite[3.1]{BHS2} for details of this construction. 

We recall the notion of a \emph{classical point} on $Y(U^p,\overline{\rho})$:
We write $X^{\ast}(T)$ for the space of algebraic characters of the product of the diagonal tori in 
\begin{equation}\label{splitWeilrestriction}
(\Res_{F/\Q}G)_{\C}\cong \prod_{\tau:F\hookrightarrow \C} \GL_{n,\C}.
\end{equation}
This space comes equipped with an action of the Weyl group $W$ of $(\Res_{F/\Q}G)_{\C}$. As usual we write $w\cdot \lambda$ for the shifted \emph{dot}-action of $W$ on $X^\ast(T)$. We write $w_0$ for the longest element of $W$.

The isomorphism $\overline{\Q_p}\cong \C$ identifies $\lambda\in X^\ast(T)$ with a character $T_p\rightarrow \overline{\Q_p}^\times$ that we denote by $z^\lambda$. If $L$ is a finite extension of $\Q_p$ such that $\Res_{F/\Q}G$ splits over $L$, then $z^\lambda$ takes values in $L$ and we may view it as an $L$-valued point of $\hat T$.

Given a representation $\pi_\infty$ of $G(F\otimes_\Q\R)$ we say that $\pi_\infty$ is of weight $\lambda\in X^\ast(T)$ if it is the restriction to $G(F\otimes_\Q\R)$ of the irreducible algebraic representation of $(\Res_{F/\Q}G)_{\C}$ of highest weight $\lambda$.

Let $\pi=\pi_\infty\otimes_\C\pi^{p,\infty}\otimes_\C\pi_p$ be an automorphic representation of $G$ such that $(\pi^{p,\infty})^{U^p}\neq 0$ and such that $\pi_\infty$ is of weight $\lambda$. 
Moreover we assume that, for all $v\in S_p$, the representation $\pi_v$ is an unramified quotient of the smooth induced representation $(\Ind_{B_v}^{G_v}\delta_{\sm,v}\delta_v)^{\sm}$ for some unramified character $\delta_{\sm,v}$ of $T_v$ with values in $L^\times$ and where $\delta_v$ is the smooth character
\[ \delta_v=(1\otimes |\cdot|_v\otimes\cdots\otimes |\cdot|_v^{n-1}).\]
Let $\delta_{\sm}\coloneqq\bigotimes_{v\in S_p}\delta_{\sm,v}$. The associated Galois-representation $\rho_\pi$ is $(G,U)$-automorphic by definition and we have \[(\rho_{\pi},\delta_{\sm} z^\lambda)\in Y(U^p,\overline{\rho})(L)\subset\mathcal{X}_{\overline{\rho},S}(L)\times \hat T(L),\]
see \cite[Proposition 3.4]{BHS2} for example. The point $x=(\rho_{\pi},\delta_{\sm} z^\lambda)$ is called the classical point associated with $(\pi,\delta_{\sm})$.

It follows from \cite{CheHaII} that, for $v\in S_p$, the representation $\rho_{\vtilde}$ is crystalline and that the character $\delta_{\sm,v}$ is of the form $\delta_{\mathcal{F}_{\vtilde}}$ for $\mathcal{F}_{\vtilde}$ a triangulation of $\rho_{\vtilde}$ (see \S\ref{crystalline} for the definition of $\delta_{\mathcal{F}_{\vtilde}}$). We say that $\rho$ is crystalline $\varphi$-generic if $\rho_{\vtilde}$ is crystalline $\varphi$-generic for all places $v$ dividing $p$. 

Assuming that $\rho$ is crystalline $\varphi$-generic, it follows from the classification of intwertinning operators between principal series that
\[ \mathcal{F}_v \longmapsto \delta_{\mathcal{F}_v}\]
induces a bijection between the set of smooth characters
$\delta_{\sm}$ such that $\pi_v\cong (\Ind_{B_v}^{G(F_v)}\delta_{\sm}\delta_v)^{\sm}$ and the triangulations of $\rho_{\vtilde}$.

Similarly, given a tuple $\underline{\mathcal{F}}=(\mathcal{F}_v)_{v\in S_p}$ of refinements we write $\delta_{\underline{\mathcal{F}}}=(\delta_{\mathcal{F}_v})_v$ for the corresponding unramified character of $T_p$. In this case $x_{\underline{\mathcal{F}}}\coloneqq (\rho,z^{\lambda}\delta_{\underline{\mathcal{F}}})$ is a classical point of $Y(U^p,\overline{\rho})$ by construction, associated to the pair $(\pi,\delta_{\underline{\mathcal{F}}})$. 

Fix an embedding $\tau:F_v\hookrightarrow \bar\Q_p$. Via the identification $\overline{\Q_p}\cong \C$ this embedding defines an embedding $\Q\hookrightarrow \C$ and we write $W_\tau$ for the factor of the Weyl group $W$ corresponding to this embedding via the decomposition $(\ref{splitWeilrestriction})$.

The relative position of the $\tau$-part of the Hodge Filtration 
\begin{align*}
\Fil_{\dR,\tau}\coloneqq \Fil_{\dR}\otimes_{F_v\otimes_{\Qp} L, \tau\otimes{\Id}}\bar\Q_p&\subset  D_{\dR}(\rho_v)\otimes_{F_v\otimes_{\Qp} L, \tau\otimes{\Id}}\bar\Q_p\\ 
&=D_{\cris}(\rho_v)\otimes_{F_{v,0}\otimes_{\Qp} L, \tau|_{F_{v,0}}\otimes{\Id}}\bar\Q_p
\end{align*}
with respect to $\mathcal{F}_v\otimes_{F_{v,0}\otimes_{\Qp} L,\tau\otimes{\Id}}\bar\Q_p$ defines an element of the Weyl group $w_{{\mathcal{F}}_\tau}\in W_\tau$. We write $w_{\underline{\mathcal{F}}}\in W$ for the Weyl group element defined by the tuple $\underline{\mathcal{F}}$.

The following proposition summarizes the properties of the eigenvariety needed for the proof of the main theorem:
\begin{prop}\label{propertiesofeigenvar}
\begin{enumerate}[(i)]
\item The eigenvariety $Y(U^p,\overline{\rho})$ is reduced and equi-dimensional of dimension\[\dim Y(U^p,\overline{\rho})=\dim \hat T^0=n[F:\Q].\]
\item The set of classical points as defined above is Zariski-dense and has the accumulation property, i.e.~for every classical point $x$ and every open connected neighborhood $U$ of $x$ the classical, crystalline $\varphi$-generic points are Zariski-dense in $U$.
\item\label{companion} Let $x=x_{\underline{\mathcal{F}}}$ be a classical, crystalline $\varphi$-generic point associated to $(\pi,\delta_{\underline{\mathcal{F}}})$ as above. For a weight $\mu\in X^*(T)$, one has
\[(\rho,z^\mu\,\delta_{\underline{\mathcal{F}}})\in Y(U^p,\overline{\rho})\Longleftrightarrow \mu=ww_0\cdot\lambda\ \text{with}\ w\in W, \ w_{\underline{\mathcal{F}}}\preceq w.\]
For $w_{\underline{\mathcal{F}}}\preceq w$, we define
\[ x_{\underline{\mathcal{F}},w}\coloneqq (\rho,z^{ww_0\cdot\lambda}\delta_{\underline{\mathcal{F}}}).\]
\item Let $x$ be as in (\ref{companion}). Then the projection $\omega:Y(U^p,\overline{\rho})\rightarrow \hat T^0$ is flat at the points $x_{\underline{\mathcal{F}},w}$.
\item The projection $Y(U^p,\overline{\rho})\rightarrow \mathcal{X}_{\overline{\rho},S}$ is locally on the source and the target a finite morphism.
\end{enumerate}
\end{prop}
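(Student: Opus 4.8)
The plan is to treat Proposition \ref{propertiesofeigenvar} as a recollection: each of the five assertions is either standard in the theory of eigenvarieties attached to definite unitary groups, or a direct consequence of the local model results of \cite{BHS3} recalled above. So I would assemble the argument piece by piece from the literature rather than reprove anything from scratch.

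For (i) and (ii) I would appeal to the construction of $Y(U^p,\overline{\rho})$ recalled in \cite[3.1]{BHS2}: locally on weight space the eigenvariety is finite flat over an admissible open of $\hat T^0$ via a Fredholm hypersurface, which yields equidimensionality of dimension $\dim\hat T^0=n[F:\Q]$; Zariski-density and the accumulation property of classical points is the usual statement that small slope overconvergent $p$-adic forms are classical, combined with the density and accumulation of classical algebraic weights inside $\hat T$; and reducedness follows from the semisimplicity of the Hecke action on spaces of classical automorphic forms on $G$ together with the density of classical points. That the classical crystalline $\varphi$-generic points remain Zariski-dense in any connected neighborhood of a classical point follows because crystallinity at $p$ and $\varphi$-genericity are Zariski-generic conditions among classical points of fixed tame level.

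For (iii) I would invoke the companion point theorem of \cite{BHS3}: for a classical crystalline $\varphi$-generic point the set of weights $\mu$ with $(\rho,z^\mu\delta_{\underline{\mathcal{F}}})\in Y(U^p,\overline{\rho})$ is exactly $\{ww_0\cdot\lambda\mid w\in W,\ w\geq w_{\underline{\mathcal{F}}}\}$, where $w_{\underline{\mathcal{F}}}$ records the relative position of the Hodge filtration and the refinement. This is the deep ingredient: it rests on Breuil's adjunction formula for locally analytic principal series, on the appearance of companion constituents in the relevant locally analytic representation, and on the local model for the trianguline variety, and it is consistent with the description --- recalled above from \cite[Thm.~3.6.2.(ii)]{BHS3} --- of the minimal primes of $R_{\rho,\mathcal{M}}$ by $\{w\geq w_{\mathcal{F}}\}$.

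For (iv) I would argue by ``miracle flatness'': by the local model theorem of \cite{BHS3} (compare diagram $(\ref{diagramm1})$ and Proposition \ref{mapdoubleweight}), after adding a framing and passing to completed local rings, the eigenvariety at $x_{\underline{\mathcal{F}},w}$ becomes, up to a formally smooth factor, a completed local ring of $\Spf R_{\rho,\mathcal{M}}$, hence Cohen-Macaulay, while the weight map $\omega$ corresponds to the map to the regular space $\hat T^0$; since the fibers of this map have the expected dimension, it is flat. For (v) I would note that $Y(U^p,\overline{\rho})$ is cut out in $\mathcal{X}_{\overline{\rho},S}\times\hat T_L$ as the support of a coherent sheaf which is, locally on $\mathcal{X}_{\overline{\rho},S}$, a finite $\mathcal{X}_{\overline{\rho},S}$-module --- for a fixed $\rho$ only finitely many characters $\delta$ can occur, being pinned down by the trianguline structure of the $\rho_{\tilde{v}}$ for $v\mid p$ --- which is again contained in \cite{BHS3} and can alternatively be read off the patched picture of \cite{BHS2}. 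The only genuinely deep input is (iii); the remaining parts are formal or routine eigenvariety bookkeeping.
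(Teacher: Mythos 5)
Parts (i)--(iii) of your proposal are fine and essentially match the paper, which simply cites \cite[3.8]{CheJL} (or \cite{BHS1}) for (i)--(ii) and the companion point theorem \cite[Thm.~5.3.3]{BHS3} for (iii). For (iv), your miracle-flatness sketch is in the spirit of the actual proof of \cite[Thm.~5.4.2]{BHS3} (which is all the paper cites), but as written it conflates $Y(U^p,\overline{\rho})$ with the patched eigenvariety: the completed local ring of $Y(U^p,\overline{\rho})$ at $x_{\underline{\mathcal{F}},w}$ is \emph{not}, up to a formally smooth factor, a completed local ring of $\Spf R_{\rho,\mathcal{M}}$ --- it is the further quotient of the patched ring by the $q$ patching variables (Proposition \ref{propcomparecomparelocalrings}), and its Cohen--Macaulayness is not available at this stage (in the paper it is only obtained later, in Lemma \ref{lemmaqequations}, whose proof uses (iv) through Lemma \ref{lemmagenericpoints}(iii), so arguing this way for $Y$ itself would be circular). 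The flatness has to be proved at the patched level, where the local model does give Cohen--Macaulayness and the dimension count works against $(\Spf S_\infty)^{\rig}\times \hat T^0$, and then descended by base change along the augmentation $S_\infty\rightarrow\mathcal{O}_L$ together with the comparison of local rings; this is what the cited theorem of \cite{BHS3} does.

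The genuine gap is in (v). You assert that the Jacquet-module sheaf is, locally on $\mathcal{X}_{\overline{\rho},S}$, a finite module, on the grounds that for fixed $\rho$ only finitely many characters $\delta$ can occur, ``pinned down by the trianguline structure''. This is essentially the statement to be proved, and the justification does not hold up: at an arbitrary (non-classical) point of $Y(U^p,\overline{\rho})$ one only controls $\delta$ in terms of a triangulation up to algebraic twists, and even granting finiteness of each set-theoretic fiber, finite fibers plus locally of finite type do not by themselves yield that the morphism is locally on the source and the target finite --- one needs a structure theorem for quasi-finite morphisms. The paper argues differently: it shows the fibers are \emph{discrete} by factoring through weights --- the composite $q:\hat T^0\rightarrow \mathbb{A}^{n[F:\Q]}\rightarrow\mathbb{A}^{n[F:\Q]}/W$ of the logarithm with the projection is quasi-finite, the Hodge--Tate weights of $\rho$ pin down a discrete subset $q^{-1}(\mathrm{HT}(\rho))\subset\hat T^0$, and the weight map $Y(U^p,\overline{\rho})\rightarrow\hat T^0$ is quasi-finite by the usual Fredholm-hypersurface covering argument --- and then invokes Huber's result \cite[Prop.~1.5.4.(c)]{Huber} that a locally of finite type, locally quasi-finite morphism is locally on source and target finite. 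To repair your route you would have to replace the unproven module-finiteness claim by such a quasi-finiteness argument plus the structure theorem.
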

\begin{proof}
\noindent Points (i) and (ii) are contained in \cite[3.8]{CheJL}. The statements can be obtained as well along the lines of Corollaire 3.12, Th\'eor\`eme 3.19 and Corollaire 3.20 of \cite{BHS1}. See Definition 3.2 and Proposition 3.4 of \cite{BHS2} for a comparison of the (a priori different) notions of classical points. \\
\noindent (iii) This is a direct consequence of \cite[Theorem 5.3.3]{BHS3}.\\
\noindent (iv) This is contained in \cite[Theorem 5.4.2]{BHS3}.\\
\noindent (v) The map $Y(U^p,\overline{\rho})\rightarrow\mathcal{X}_{\overline{\rho},S}$ is the composite of the closed embedding $Y(U^p,\overline{\rho})\subset \mathcal{X}_{\overline{\rho},S}\times \hat T$ with the projection $\mathcal{X}_{\overline{\rho},S}\times \hat T\rightarrow \mathcal{X}_{\overline{\rho},S}$. These two maps are locally of finite type so that the map $Y(U^p,\overline{\rho})\rightarrow\mathcal{X}_{\overline{\rho}}$ is locally of finite type. 
We claim that the fibers are discrete and hence the morphism is locally quasi-finite. The Proposition then follows from \cite[Prop.~1.5.4.(c)]{Huber}.

Indeed, consider the morphism $ \mathcal{X}_{\overline{\rho},S}\rightarrow \mathbb{A}^{n[F:\Q]}/W$ given by mapping $\rho$ to the set of Hodge-Tate weights of the $\rho_v$, for $v|p$. 
Fix a point $\rho$ and write $\mathrm{HT}(\rho)$ for its image in $ \mathbb{A}^{n[F:\Q]}/W$ for the moment. 
The composition \[q:\hat T^0\longrightarrow \mathbb{A}^{n[F:\Q]}\longrightarrow \mathbb{A}^{n[F:\Q]}/W\] of the logarithm with the projection map is obviously quasi-finite and hence $q^{-1}(\mathrm{HT}(\rho))$ is a discrete set. 
Finally the weight map $Y(U^p,\overline{\rho})\rightarrow \hat T^0$ is quasi-finite by the usual argument using special coverings of Fredholm hypersurfaces (see e.g.~\cite[Proposition 3.11]{BHS1}). And hence the preimage of $q^{-1}(\mathrm{HT}(\rho))$ under the weight map is still a discrete set. As this set contains the fiber of $Y(U^p,\overline{\rho})\rightarrow\mathcal{X}_{\overline{\rho},S}$ over $\rho$ the claim follows.
\end{proof}

We further recall the \emph{patched eigenvariety} $X_p(\overline{\rho})$ and its relation to the global object $Y(U^p,\overline{\rho})$. In \cite[3]{BHS1} we have carried out the following construction:
Let us write 
\[R_{\overline{\rho}_p}=\widehat{\bigotimes}_{v\in S_p} R_{\overline{\rho}_{\vtilde}}\ \text{and}\ R_{\overline{\rho}^p}=\widehat{\bigotimes}_{v\in S\backslash S_p} R_{\overline{\rho}_{\vtilde}}\]
for the completed tensor products of the maximal reduced and $\Z_p$-flat quotients $R_{\overline{\rho}_v}$ of the universal framed deformation rings $R_{\overline{\rho}_{\vtilde}}'$ of $\overline{\rho}_{\vtilde}$. Let
\[ R_{\overline{\rho},\mathcal{S}}\coloneqq R_{\overline{\rho},S}\otimes_{\left(\widehat{\bigotimes}_{v\in S}R'_{\overline{\rho}_{\vtilde}}\right)}\left(\widehat{\bigotimes}_{v\in S}R_{\overline{\rho}_{\vtilde}}\right).\]
There exists an integer $g\geq 1$ and a commutative diagram with maps of local $\mathcal{O}_L$-algebras
\begin{equation}\label{diagpatching} \begin{tikzcd} S_\infty\coloneqq \mathcal{O}_L\dbl \Zp^q \dbr \ar[r] \ar[d] &
R_\infty\coloneqq \big(R_{\overline{\rho}_p}\widehat{\otimes}_{\mathcal{O}_L}R_{\overline{\rho}^p}\big)\dbl y_1,\dots,y_g\dbr \ar[d, twoheadrightarrow] \\
R_{\infty}\otimes_{S_\infty} \mathcal{O}_L \ar[r,twoheadrightarrow] & R_{\overline{\rho},\mathcal{S}} \end{tikzcd}
\end{equation}
where the left vertical map is induced by the augmentation map $S_\infty\rightarrow \mathcal{O}_L$ and where $q=g+[F:\Q]\frac{n(n-1)}{2}+n^2 |S|$.

We write $\mathcal{X}_\infty$ and $\mathcal{X}_{\overline{\rho}_p}$ for the rigid analytic generic fibers of $\Spf R_\infty$ and $\Spf R_{\overline{\rho}_p}$. Moreover we denote by $X_p(\overline{\rho})\subset \mathcal{X}_\infty\times\hat T$ the patched eigenvariety constructed in \cite{BHS1}. Then there is a canonical embedding
\begin{equation}\label{compareisonoflocalandpatchedeigenvar}
Y(U^p,\overline{\rho})\hookrightarrow X_p(\overline{\rho})\times_{(\Spf S_\infty)^{\rig}}\Sp L\subset \mathcal{X}_\infty\times \hat T,
\end{equation}
see \cite[4.1]{BHS1} or \cite[(5.34)]{BHS3}. Let us abbreviate $X_p(\overline{\rho})\times_{(\Spf S_\infty)^{\rig}}\Sp L$ by $Y_p(\overline{\rho})$ for the moment.
The precise relation of the local geometry of the patched eigenvariety and the (global) eigenvariety is given by the following proposition:
\begin{prop}\label{propcomparecomparelocalrings}
Assume that $x=x_{\underline{\mathcal{F}}}$ is a classical, crystalline $\varphi$-generic point. For each $w\in W$ such that $x_{\underline{\mathcal{F}},w}\in Y(U^p,\overline{\rho})$ the morphism $(\ref{compareisonoflocalandpatchedeigenvar})$ induces an isomorphism of complete local rings
\[\hat{\mathcal{O}}_{Y_p(\overline{\rho}),x_{\underline{\mathcal{F}},w}}\cong \hat{\mathcal{O}}_{Y(U^p,\overline{\rho}),x_{\mathcal{F},w}}.\]
\end{prop}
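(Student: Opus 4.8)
The plan is to realise $\hat{\mathcal{O}}_{Y(U^p,\overline{\rho}),x}$ and $\hat{\mathcal{O}}_{Y_p(\overline{\rho}),x}$ (write $x=x_{\underline{\mathcal{F}},w}$) as quotients of $\hat{\mathcal{O}}_{X_p(\overline{\rho}),x}$ and to prove the two quotients agree by exhibiting a faithful module on the smaller one. Let $M_\infty$ be the patched module of \cite[\S3]{BHS1}, a maximal Cohen-Macaulay $S_\infty$-module whose associated locally analytic $B_p$-Jacquet module defines a coherent sheaf $\mathcal{M}$ on $\mathcal{X}_\infty\times\hat{T}$ with $\Supp\mathcal{M}=X_p(\overline{\rho})$, and whose reduction modulo the augmentation ideal of $S_\infty$ recovers the $p$-adic automorphic forms $\hat{S}(U^p,\mathcal{O}_L)_{\mathfrak{m}}$. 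Then $\mathcal{M}\otimes_{S_\infty}L$ is the corresponding Jacquet sheaf attached to $\hat{S}(U^p,L)^{\an}_{\mathfrak{m}}$, its support is exactly $Y(U^p,\overline{\rho})$, and \eqref{compareisonoflocalandpatchedeigenvar} is the ensuing closed immersion. Since the formation of complete local rings commutes with the fibre product over $(\Spf S_\infty)^{\rig}$, we get $\hat{\mathcal{O}}_{Y_p(\overline{\rho}),x}=\hat{\mathcal{O}}_{X_p(\overline{\rho}),x}/\mathfrak{a}$, where $\mathfrak{a}$ is generated by the image of the $q$ local parameters of $(\Spf S_\infty)^{\rig}$ at $\Sp L$, together with a surjection $\hat{\mathcal{O}}_{Y_p(\overline{\rho}),x}\twoheadrightarrow\hat{\mathcal{O}}_{Y(U^p,\overline{\rho}),x}$ whose kernel is $\mathrm{Ann}_{\hat{\mathcal{O}}_{Y_p(\overline{\rho}),x}}(\widehat{\mathcal{M}\otimes_{S_\infty}L})$. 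So it suffices to show that this annihilator vanishes.

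The crucial input is that $\hat{\mathcal{O}}_{Y_p(\overline{\rho}),x}$ is reduced. Because $x$ is a classical, crystalline $\varphi$-generic point, the local model of \cite[\S3.5--3.6]{BHS3} (cf.~diagram \eqref{diagramm1}) combined with the patching diagram \eqref{diagpatching} identifies $\hat{\mathcal{O}}_{X_p(\overline{\rho}),x}$, up to a formally smooth factor accounting for the framing variables, the deformations away from $p$ and the patching variables $y_1,\dots,y_g$, with the completed tensor product $\widehat{\bigotimes}_{v\mid p}R_{\rho_{\tilde{v}},\mathcal{M}_{\tilde{v}}}$ of the quasi-trianguline deformation rings, in such a way that the map to $(\Spf S_\infty)^{\rig}$ becomes the projection onto that smooth factor together with the weight directions of the $R_{\rho_{\tilde{v}},\mathcal{M}_{\tilde{v}}}$. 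Passing to the fibre over $\Sp L$ thus removes the smooth factor and replaces each $R_{\rho_{\tilde{v}},\mathcal{M}_{\tilde{v}}}$ (which is reduced, Cohen-Macaulay and equidimensional with normal irreducible components indexed by $\{w'\geq w_{\mathcal{F}_{\tilde{v}}}\}$, by \cite[Thm.~3.6.2(ii), Prop.~3.6.4]{BHS3}) by a fibre over weight space; the explicit description of these rings in \cite[\S3]{BHS3} through $\gtilde\times_{\mathfrak{g}}\gtilde$ over $\mathfrak{t}_{K/\Qp,L}\times_{\mathfrak{t}_{K/\Qp,L}/W}\mathfrak{t}_{K/\Qp,L}$ (cf.~Proposition \ref{mapdoubleweight}), together with the Cohen-Macaulayness of $M_\infty$ (which makes the $q$ parameters of $S_\infty$ a regular sequence on $M_\infty$, so that no embedded component appears in the fibre), shows that $\hat{\mathcal{O}}_{Y_p(\overline{\rho}),x}$ is again reduced and equidimensional, of dimension $n[F:\Q]$; by Proposition \ref{propertiesofeigenvar}(i) this agrees with $\dim Y(U^p,\overline{\rho})$, and the irreducible components on both sides are indexed by the same Weyl-group data.

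Granting reducedness the conclusion is immediate. Let $\mathfrak{p}$ be a minimal prime of the reduced ring $\hat{\mathcal{O}}_{Y_p(\overline{\rho}),x}$ and $\tilde{\mathfrak{p}}\supseteq\mathfrak{a}$ the corresponding prime of $\hat{\mathcal{O}}_{X_p(\overline{\rho}),x}$. Then $\mathcal{M}_{\tilde{\mathfrak{p}}}\neq 0$ since $X_p(\overline{\rho})=\Supp\mathcal{M}$; it is finitely generated over $(\hat{\mathcal{O}}_{X_p(\overline{\rho}),x})_{\tilde{\mathfrak{p}}}$, and $\mathfrak{a}$ lies in its maximal ideal, so Nakayama gives $(\widehat{\mathcal{M}\otimes_{S_\infty}L})_{\mathfrak{p}}=\mathcal{M}_{\tilde{\mathfrak{p}}}/\mathfrak{a}\mathcal{M}_{\tilde{\mathfrak{p}}}\neq 0$. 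Hence $\mathrm{Ann}_{\hat{\mathcal{O}}_{Y_p(\overline{\rho}),x}}(\widehat{\mathcal{M}\otimes_{S_\infty}L})$ is contained in every minimal prime of the reduced ring $\hat{\mathcal{O}}_{Y_p(\overline{\rho}),x}$, hence is zero, and \eqref{compareisonoflocalandpatchedeigenvar} induces an isomorphism on complete local rings at $x$. Equivalently: near $x$, \eqref{compareisonoflocalandpatchedeigenvar} is a closed immersion onto a reduced space that is surjective — the surjectivity being visible already from the companion points $x_{\underline{\mathcal{F}},w'}$, $w'\geq w_{\underline{\mathcal{F}}}$, which lie in $Y(U^p,\overline{\rho})$ by Proposition \ref{propertiesofeigenvar}(iii) and, using the flatness of the weight map from Proposition \ref{propertiesofeigenvar}(iv), exhaust the irreducible components of $Y_p(\overline{\rho})$ through $x$ — hence an isomorphism.

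The main obstacle is the reducedness step: making precise, from the local model of \cite{BHS3} and the patching formalism of \cite{BHS1}, that cutting by $\mathfrak{a}$ removes exactly the patching, framing and weight directions, leaving the reduced and equidimensional product of trianguline deformation rings, rather than a possibly non-reduced proper fibre of it; and correspondingly matching up the irreducible components through $x_{\underline{\mathcal{F}},w}$ on the patched and global sides. This is precisely where the Cohen-Macaulayness of $M_\infty$ and the fine structure of \cite[\S3, \S5]{BHS3} at crystalline $\varphi$-generic points are indispensable.
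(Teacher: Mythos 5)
Your overall skeleton is reasonable and the formal parts are fine: since the Jacquet functor is left exact and commutes with taking $\mathfrak{a}$-torsion, the sheaf on the fibre is indeed $\mathcal{M}\otimes_{S_\infty}L$, so the kernel of $\hat{\mathcal{O}}_{Y_p(\overline{\rho}),x}\twoheadrightarrow\hat{\mathcal{O}}_{Y(U^p,\overline{\rho}),x}$ is the annihilator of its completed stalk, and your Nakayama/support argument correctly shows this annihilator lies in every minimal prime, hence is nilpotent. But note that the paper does not reprove this statement at all (its proof is the citation \cite[Prop.~5.4.1]{BHS3}), so the burden on you is to supply the one piece of real content, namely the reducedness of $\hat{\mathcal{O}}_{Y_p(\overline{\rho}),x_{\underline{\mathcal{F}},w}}$ (equivalently, faithfulness of its action on the fibre of the patched sheaf), and this is exactly where your argument has a genuine gap.

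The justification you sketch rests on a misreading of the patching diagram \eqref{diagpatching}: the augmentation ideal of $S_\infty$ does \emph{not} correspond to ``the smooth factor together with the weight directions'' of the rings $R_{\rho_{\vtilde},\mathcal{M}_v}$. Quotienting by it recovers the \emph{global} deformation ring, $R_\infty\otimes_{S_\infty}\mathcal{O}_L\cong R_{\overline{\rho},\mathcal{S}}$, a condition invisible to the local model; even numerically your description is inconsistent, since removing the smooth factor and the weight directions drops the dimension by $g+n^2|S\setminus S_p|+n[F:\Q]$ rather than by $q=g+n^2|S|+[F:\Q]\tfrac{n(n-1)}{2}$, and the resulting ``local fibre'' would have dimension $n^2|S_p|+[F:\Q]\tfrac{n(n-1)}{2}$, not the $n[F:\Q]$ you assert. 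Moreover Cohen--Macaulayness (of $\hat{\mathcal{O}}_{X_p(\overline{\rho}),x}$ or of the patched module) only gives that the fibre has no embedded components, i.e.\ $S_1$; reducedness also requires generic reducedness, and that is not formal: for $A=k[[s,t]]/(st)$, $M=A/(s)\oplus A/(t)$ (faithful, maximal Cohen--Macaulay) and the regular element $f=s+t$, one has $\Supp(M/fM)=\Spec(A/f)$ as sets, yet $A/f\cong k[[s]]/(s^2)$ acts non-faithfully on $M/fM$. So support equality plus Cohen--Macaulayness and a regular sequence cannot by themselves yield the isomorphism; the genuinely global input at the companion point $x_{\underline{\mathcal{F}},w}$, which is what the cited \cite[\S 5.4]{BHS3} provides, is missing from your proposal, and you yourself flag it as ``the main obstacle'' rather than closing it.
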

\begin{proof}
This is \cite[Proposition 5.4.1]{BHS3}.
\end{proof}
Finally we recall the relation of the patched eigenvariety with the space of trianguline representations, see \cite{BHS1}. Let $X_{\tri}(\overline{\rho})=\prod_{v\in S_p}X_{\tri}(\overline{\rho}_v)\subset \mathcal{X}_{\overline{\rho}_p}\times \hat T$. Then there is a commutative diagram
\begin{equation}\label{patchedeigenvarvsXtri}
\begin{tikzcd}
X_p(\overline{\rho}) \ar[r, "\iota"]\ar[d] & X_{\tri}(\overline{\rho})\ar[d] \times \mathcal{X}_{\overline{\rho}^p}\times \mathbb{U}^g\\
\hat T^0 \ar[r, "\cong"]& \hat T^0,
\end{tikzcd}
\end{equation}
where $\iota$ is a closed embedding that identifies $X_p(\overline{\rho})$ with a union of irreducible components of the target. Here $\mathbb{U}^g=(\Spf \mathcal{O}_L\dbl y_1,\dots, y_g\dbr)^{\rig}$, see \cite[Theorem 3.21]{BHS1}.

\subsection{A characterization of the tangent space}\label{subsectionglobaltangentspace}
We fix a $(G,U)$-automorphic representation $\rho\in \mathcal{X}_{\overline{\rho},S}\subset \mathcal{X}_\infty$ that is crystalline $\varphi$-generic. For the reminder of this subsection we introduce the following notations:

Let $R$ be the complete local ring of $\mathcal{X}_\infty$ at $\rho$ so that $(\mathcal{X}_\infty)_\rho^{\hat{}}=\Spf R$ and, for a given refinement $\underline{\mathcal{F}}=(\mathcal{F}_v)_{v\in S_p}$ of $\rho$ and $w\in W$ such that $w_{\underline{\mathcal{F}}}\preceq w$ let $R_{\underline{\mathcal{F}},w}$ be the complete local ring of $X_p(\overline{\rho})$ at the point $x_{\underline{\mathcal{F}},w}$, so that $X_p(\overline{\rho})^{\hat{}}_{x_{\underline{\mathcal{F}},w}}=\Spf R_{\underline{\mathcal{F}},w}$.  By \cite[Lemma 4.3.3]{BHS3},  the canonical map $R\rightarrow R_{\underline{\mathcal{F}},w}$ is a surjection.
Similarly we define $S$ as the complete local ring of $\mathcal{X}_{\overline{\rho},S}$ at $\rho$ and $S_{\underline{\mathcal{F}},w}$ the complete local ring of $Y_p(\overline{\rho})$ at $x_{\underline{\mathcal{F}},w}$. Then we have a canonical surjection $R\twoheadrightarrow S$ and, by Proposition \ref{propcomparecomparelocalrings} an identification $S_{\underline{\mathcal{F}},w}=S\otimes_R R_{\underline{\mathcal{F}},w}=R_{\underline{\mathcal{F}},w}\otimes_{S_\infty}\mathcal{O}_L$.

Obviously the ring $R_{\underline{\mathcal{F}},w}$ decomposes as a tensor product 
\[R_{\underline{\mathcal{F}},w}=\widehat\bigotimes_{v|p}R_{{\mathcal{F}_v,w_v}}\hat\otimes \widehat{\mathcal{O}}_{\mathcal{X}_{{\overline{\rho}^p}\times\mathbb{U}^g},{\rho^p}},\] 
where $R_{{\mathcal{F}_v,w_v}}$ is the complete local ring of $X_{\tri}(\overline{\rho}_v)$ at $x_{\mathcal{F}_v,w_v}=(\rho_v,z^{w_vw_0\cdot\lambda_v}\delta_{\mathcal{F}_v})$; and where $\rho^p$ is the image of $\rho$ in $\mathcal{X}_{\overline{\rho}^p}\times\mathbb{U}^g$.

By \cite[Cor.~3.7.8]{BHS3} and \cite[Thm.~3.6.2.(ii)]{BHS3} the quotient $R_{{\mathcal{F}_v,w_v}}$ of $R_{\overline{\rho}_{\vtilde}}$ coincides with the quotient $R_{\overline{\rho}_{\vtilde},\mathcal{F}_v}^{w_v}$ of $R_{\overline{\rho}_{\vtilde},\mathcal{F}_v}$ defined in section \ref{reminderqtriangulinecomponents}. 
We conclude that the map $R_{\infty}\rightarrow R_{\underline{\mathcal{F}},w}$ induces an isomorphism
\begin{equation}\label{comparisontriangulinedeformation} R_{\infty}\otimes_{\left(\widehat{\bigotimes}_{v|p}R_{\overline{\rho}_{\vtilde}}\right)}\left(\widehat{\bigotimes}_{v|p}R_{\overline{\rho}_{\vtilde},\mathcal{F}_v}^{w_v}\right) \xrightarrow{\sim} R_{\underline{\mathcal{F}},w}.\end{equation}

Let us write $\Spec R_{\underline{\mathcal{F}}}$ for the scheme theoretic image of the canonical morphism
\[\coprod_{w_{\underline{\mathcal{F}}}\preceq w} \Spec R_{\underline{\mathcal{F}},w}\longrightarrow \Spec R\]
and $\Spec S_{\underline{\mathcal{F}}}$
for the scheme theoretic image of the canonical morphism
\[\coprod_{w_{\underline{\mathcal{F}}}\preceq w} \Spec S_{\underline{\mathcal{F}},w}\longrightarrow \Spec S.\]
\begin{lemm}\label{lemmagenericpoints}
\begin{enumerate}[(i)]
\item The scheme $\Spec R_{\underline{\mathcal{F}}}$ is reduced and Cohen-Macaulay of dimension 
\[g+[F:\Q]\frac{n(n+1)}{2}+n^2|S|=q+n[F:\Q].\]
\item The scheme $\Spec S_{\underline{\mathcal{F}},w}$ is reduced and equi-dimensional of dimension $n[F:\Q]$. The same holds true for $\Spec S_{\underline{\mathcal{F}}}$\\
\item Let $\eta\in \Spec S_{\underline{\mathcal{F}},w}$ be a generic point, then $\eta\notin \Spec R_{\underline{\mathcal{F}},w'}$ for $w'\neq w$.
\end{enumerate}
\end{lemm}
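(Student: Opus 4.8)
The three assertions are of increasing subtlety, and I would treat them in order. For (i), the plan is to trace through the isomorphism \eqref{comparisontriangulinedeformation} together with the decomposition $R_{\underline{\mathcal F},w}=\widehat\bigotimes_{v|p}R_{\mathcal F_v,w_v}\,\widehat\otimes\,\widehat{\mathcal O}_{\mathcal X_{\overline\rho^p\times\mathbb U^g},\rho^p}$: each $R_{\overline\rho_{\vtilde},\mathcal F_v}$ pro-represents $\mathfrak X_{\rho_{\vtilde},\mathcal M_v}$ and by the results recalled in \S\ref{reminderqtriangulinecomponents} (namely \cite[Thm.~3.6.2.(ii)]{BHS3}, \cite[Prop.~3.6.4.]{BHS3}) it is reduced, Cohen--Macaulay, equidimensional of dimension $n^2+[F_v:\Qp]\frac{n(n+1)}{2}$, with minimal primes indexed by $\{w_v\in W_v:\, w_v\geq w_{\mathcal F_v}\}$, so that the union over all such $w_v$ of the $\Spec R^{w_v}_{\overline\rho_{\vtilde},\mathcal F_v}$ is exactly $\Spec R_{\overline\rho_{\vtilde},\mathcal F_v}$. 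The scheme-theoretic image $\Spec R_{\underline{\mathcal F}}$ of $\coprod_{w}\Spec R_{\underline{\mathcal F},w}$ is therefore the completed tensor product over $v|p$ of the $R_{\overline\rho_{\vtilde},\mathcal F_v}$ with $\widehat{\mathcal O}_{\mathcal X_{\overline\rho^p\times\mathbb U^g},\rho^p}$, base-changed along \eqref{diagpatching}; reducedness and Cohen--Macaulayness are preserved by such (flat, regular) completed tensor products over a field of characteristic $0$, and the dimension count is additive. The only care needed is to check that the scheme-theoretic image really is this tensor product and not something smaller — this follows because each factor $R_{\overline\rho_{\vtilde},\mathcal F_v}$ is the union of its components $R^{w_v}_{\overline\rho_{\vtilde},\mathcal F_v}$ as a \emph{reduced} scheme.

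For (ii), I would deduce everything from (i) by the base change $S_{\underline{\mathcal F},w}=R_{\underline{\mathcal F},w}\otimes_{S_\infty}\mathcal O_L$, which cuts down the dimension by $q$ (the Krull dimension of $S_\infty$), giving $q+n[F:\Q]-q=n[F:\Q]$. Reducedness of $\Spec S_{\underline{\mathcal F},w}$ and $\Spec S_{\underline{\mathcal F}}$ then comes from Proposition \ref{propertiesofeigenvar}(i) — the eigenvariety $Y(U^p,\overline\rho)$ is reduced and equidimensional of dimension $n[F:\Q]$ — via the identification $S_{\underline{\mathcal F},w}=\widehat{\mathcal O}_{Y(U^p,\overline\rho),x_{\mathcal F,w}}$ of Proposition \ref{propcomparecomparelocalrings}, together with the fact that the scheme-theoretic image $\Spec S_{\underline{\mathcal F}}$ of a union of reduced local rings of a reduced equidimensional scheme is again reduced and of the same dimension (all components of $Y(U^p,\overline\rho)$ through $x$ passing through some $x_{\mathcal F,w}$). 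One should also invoke Proposition \ref{propertiesofeigenvar}(iv) — the flatness of the weight map at the points $x_{\underline{\mathcal F},w}$ — if needed to guarantee equidimensionality locally.

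The genuinely delicate assertion is (iii): a generic point $\eta$ of $\Spec S_{\underline{\mathcal F},w}$ does not lie on $\Spec R_{\underline{\mathcal F},w'}$ for any $w'\neq w$. The plan here is to separate the components by the map $\Theta$ to (the completion of) $\mathfrak t_{K/\Qp,L}\times_{\mathfrak t_{K/\Qp,L}/W}\mathfrak t_{K/\Qp,L}$ recalled in \S\ref{reminderqtriangulinecomponents}: by Proposition \ref{mapdoubleweight}, $\Theta(\mathfrak X^{w}_{(\rho,V),\mathcal M})\subset\widehat{\mathfrak t_{w'}}$ holds if and only if $w=w'$, so the components $\Spec R_{\mathcal F_v,w_v}$ for distinct $w_v$ map to distinct irreducible components $\widehat{\mathfrak t_{w_v}}$, which intersect only in the non-generic locus (the diagonal-type degeneration, where the two tori coincide). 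Concretely, I would argue that a generic point of $\Spec S_{\underline{\mathcal F},w}$ maps to the generic point of $\widehat{\mathfrak t_w}$ under (the restriction to the eigenvariety of) $\Theta$ — here one uses Proposition \ref{propertiesofeigenvar}(iv), flatness of the weight map, to see the weight coordinate is not identically constrained — and hence cannot lie on any $\Spec R_{\underline{\mathcal F},w'}$ with $w'\neq w$, whose image under $\Theta$ is contained in $\widehat{\mathfrak t_{w'}}\neq\widehat{\mathfrak t_w}$. The main obstacle is making precise that the generic point of $S_{\underline{\mathcal F},w}$ is not swallowed by the intersection locus $\widehat{\mathfrak t_w}\cap\widehat{\mathfrak t_{w'}}$: this requires knowing that $S_{\underline{\mathcal F},w}$ is equidimensional of the expected dimension $n[F:\Q]$ (from (ii)) while $\widehat{\mathfrak t_w}\cap\widehat{\mathfrak t_{w'}}$ has strictly smaller dimension, so that a full-dimensional component of $\Spec S_{\underline{\mathcal F},w}$ cannot be contained in it — a dimension count that I would carry out using the explicit description of $\widehat{\mathfrak t_w}$ as $\{(t,\mathrm{Ad}(w^{-1})t)\}$ and the flatness in Proposition \ref{propertiesofeigenvar}(iv).
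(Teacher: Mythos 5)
Your plan follows essentially the same route as the paper: for (i) the decomposition of $R_{\underline{\mathcal{F}},w}$ together with the BHS3 properties of the rings $R^{w_v}_{\overline{\rho}_{\vtilde},\mathcal{F}_v}$ recalled in Section \ref{reminderqtriangulinecomponents}, for (ii) the identification of $S_{\underline{\mathcal{F}},w}$ with the completed local ring of the eigenvariety via Proposition \ref{propcomparecomparelocalrings} and Proposition \ref{propertiesofeigenvar}(i), and for (iii) the separation of components by $\underline{\Theta}$ (Proposition \ref{mapdoubleweight}, i.e.\ \cite[Cor.~3.5.12]{BHS3}) combined with flatness of the weight map, your dimension-count phrasing being just a mild variant of the paper's ``flat, hence dominant on each component'' argument. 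The only points to tighten are that in (i) the regularity of the factor $\widehat{\mathcal{O}}_{\mathcal{X}_{\overline{\rho}^p}\times\mathbb{U}^g,\rho^p}$ is a genuine input (smoothness of $\mathcal{X}_{\overline{\rho}^p}$ at $\rho^p$ via Caraiani and BLGGT, using that $\rho$ is automorphic) rather than something to assume, and that in (ii) the opening ``cuts down the dimension by $q$'' is not a valid deduction by itself (cutting by $q$ equations only bounds the codimension) but is anyway superseded by the eigenvariety identification you invoke next, which is the paper's actual argument.
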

\begin{proof}
\noindent (i) As $\rho$ is automorphic the space $\mathcal{X}_{\overline{\rho}^p}$ is smooth of dimension $n^2|S\backslash S_p|$ at (the image of) $\rho$, by \cite[Theorem 1.2]{Caraiani} and \cite[Lemma 1.3.2 (1)]{BLGGT}.
The claim now follows from diagram $(\ref{patchedeigenvarvsXtri})$, isomorphism \eqref{comparisontriangulinedeformation} and the fact that $R_{\rho_{\vtilde},\mathcal{F}_v}^{w_v}$ is reduced, normal and Cohen-Macaulay of dimension $n^2+[F_v:\Q_p]\tfrac{n(n+1)}{2}$ (see section \ref{reminderqtriangulinecomponents}).\\
\noindent (ii) As $S_{\underline{\mathcal{F}},w}$ is the complete local ring at some point of the eigenvariety $Y(U^p,\overline{\rho})$, the claims follow from the corresponding statements on $Y(U^p,\overline{\rho})$ in Proposition \ref{propertiesofeigenvar}. The claim on $\Spec S_{\underline{\mathcal{F}}}$ then is a direct consequence. \\ 
\noindent (iii) 
Let us write $\Spf A$ for the formal completion of $\hat T^0$ at $\omega(\iota(x))$. Thus $A$ is just the completed tensor-product of power series rings of dimension $n[F_v:\Q_p]$ indexed by $v\in S_p$. 
We identify $\Spf A$ with the formal completion of $ {\mathfrak{t}}_{K/\Q_p,L}$ at the origin via $\kappa\mapsto \kappa-\omega(\iota(x))$.

Taking products over all $v\in S_p$ and the product with the formally smooth contribution from $\mathcal{X}_{\overline{\rho}^p}\times \mathbb{U}^g$ we obtain a commutative diagram as in \cite[2.5]{BHS3}:
\[\begin{tikzcd}
\Spf S_{\underline{\mathcal{F}},w} \ar[r, hookrightarrow]\ar[rd]& \Spf R_{\underline{\mathcal{F}},w} \ar[d,]\ar[r, hookrightarrow]& \Spf R_{\underline{\mathcal{F}}}\ar[d, "\underline{\Theta}"]\\
& \Spf A \ar[r,"\psi"]& \prod_{v|p} (\mathfrak{t}_{F_v/\Q_p,L}\times_{\mathfrak{t}_{F_v/\Q_p,L}/W_v}\mathfrak{t}_{F_v/\Q_p,L} )^{\hat {}}_0,
\end{tikzcd}\]
where $\underline{\Theta}$ is the product of all the maps $\Theta$ for all $v|p$, as defined before Proposition \ref{mapdoubleweight}.

Recall that $\mathfrak{t}_{F_v/\Q_p,L}\times_{\mathfrak{t}_{F_v/\Q_p,L}/W_v}\mathfrak{t}_{F_v/\Q_p,L}$ decomposes as a product $\prod_{w'_v\in W_v}\mathfrak{t}_{w'_v}$. It follows from \cite[Cor.~3.5.12]{BHS3} that the morphism $\psi$ identifies $\Spf A$ with 
the product $\hat {\mathfrak{t}}_{w}=\prod_{v|p}\hat{\mathfrak{t}}_{w_v,0}$.
Let us write $\Spf B$ for the formal scheme in the lower left corner of the diagram, and $\Spf B_{w'}=\hat {\mathfrak{t}}_{w'}=\prod_{v|p}\hat{\mathfrak{t}}_{w'_v,0}$.

Passing to rings and applying $\Spec(-)$ the above diagram becomes:

\[\begin{tikzcd}
\Spec S_{\underline{\mathcal{F}},w} \ar[r, hookrightarrow]\ar[rd, "\omega"]& \Spec R_{\underline{\mathcal{F}},w} \ar[d,]\ar[rr, hookrightarrow]&& \Spec R_{\underline{\mathcal{F}}}\ar[d, "\underline{\Theta}"]\\
& \Spec A \ar[r,"\cong"]&\Spec B_w\ar[r, hookrightarrow]&\Spec B,
\end{tikzcd}\]
where we use the same latters for the maps by abuse of notation.
Now, by Proposition \ref{propertiesofeigenvar}, the map $\omega$ is flat and hence dominant when restricted to each irreducible component of $\Spec S_{\underline{\mathcal{F}},w}$.
On the other hand $(\underline{\Theta})(\Spec R_{\underline{\mathcal{F}},w'})=\Spec B_{w'}$ by \cite[Cor.~3.5.12]{BHS3} (compare also Proposition \ref{mapdoubleweight}), and $\Spec B_{w'}$ does not contain the generic point of $\Spec B_w$ for $w\neq w'$. The claim follows from this.
\end{proof}
\begin{lemm}\label{lemmaqequations}
The canonical maps $R_{\underline{\mathcal{F}}}\otimes_{S_\infty}\mathcal{O}_L\rightarrow R_{\underline{\mathcal{F}}}\otimes_R S$ and $R_{\underline{\mathcal{F}}}\otimes_R S \rightarrow S_{\underline{\mathcal{F}}}$ are isomorphisms, i.e.
\begin{equation}\label{intersectionasschemes}
\Spec S_{\underline{\mathcal{F}}}=\Spec R_{\underline{\mathcal{F}}}\cap \Spec S=\Spec R_{\underline{\mathcal{F}}}\times_{\Spec R} \Spec S
\end{equation}
as subschemes of $\Spec R$. In particular $\Spec S_{\underline{\mathcal{F}}}\subset\Spec R_{\underline{\mathcal{F}}}$ is a closed subscheme that is cut out by $q$ equations.
\end{lemm}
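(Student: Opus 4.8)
The plan is to reduce everything to the reducedness of the ring $R_{\underline{\mathcal F}}\otimes_{S_\infty}\mathcal{O}_L$, after which a soft argument with localizations at minimal primes concludes. Write $\underline z=(z_1,\dots,z_q)$ for the images in $R$ of generators of the augmentation ideal $\ker(S_\infty\to\mathcal{O}_L)$ forming a regular sequence (they form one in the regular local ring $S_\infty$); thus $R_{\underline{\mathcal F}}\otimes_{S_\infty}\mathcal{O}_L=R_{\underline{\mathcal F}}/\underline z R_{\underline{\mathcal F}}$ and, by the identification recalled just before the lemma, $R_{\underline{\mathcal F},w}/\underline z R_{\underline{\mathcal F},w}=S_{\underline{\mathcal F},w}$ for every $w$. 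Since by construction the composite $S_\infty\to R\twoheadrightarrow S$ factors through the augmentation, and $R_{\underline{\mathcal F}}$ is a quotient of $R$, I obtain canonical surjections
\[
R_{\underline{\mathcal F}}\otimes_{S_\infty}\mathcal{O}_L=R_{\underline{\mathcal F}}/\underline z R_{\underline{\mathcal F}}\twoheadrightarrow R_{\underline{\mathcal F}}\otimes_R S\twoheadrightarrow S_{\underline{\mathcal F}},
\]
whose two arrows are the canonical maps in the statement. It therefore suffices to prove the composite is injective.

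First I would pin down the underlying spaces. Because $R\to R_{\underline{\mathcal F},w}$ is surjective by \cite[Lemma 4.3.3]{BHS3}, the space $\Spec R_{\underline{\mathcal F}}$ is set-theoretically $\bigcup_{w_{\underline{\mathcal F}}\preceq w}\Spec R_{\underline{\mathcal F},w}$; intersecting with $V(\underline z)$ and using $R_{\underline{\mathcal F},w}/\underline z R_{\underline{\mathcal F},w}=S_{\underline{\mathcal F},w}$ shows that all three rings in the display above have underlying space $\bigcup_w\Spec S_{\underline{\mathcal F},w}$, which by Lemma \ref{lemmagenericpoints}(ii) is equidimensional of dimension $n[F:\Q]$. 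Next, Lemma \ref{lemmagenericpoints}(i) gives that $R_{\underline{\mathcal F}}$ is Cohen--Macaulay of dimension $q+n[F:\Q]$; since cutting it by the $q$ elements $\underline z$ drops the dimension to $n[F:\Q]$, i.e. by exactly $q$, the sequence $\underline z$ is $R_{\underline{\mathcal F}}$-regular. Hence $R_{\underline{\mathcal F}}/\underline z R_{\underline{\mathcal F}}$ is again Cohen--Macaulay, in particular equidimensional and without embedded associated primes.

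The technical heart will be the reducedness of $R_{\underline{\mathcal F}}/\underline z R_{\underline{\mathcal F}}$. By Serre's criterion (we already have no embedded primes) it remains to check it is reduced at each minimal prime $\eta$. Such an $\eta$ is a generic point of $\Spec S_{\underline{\mathcal F},w}$ for some $w$, and by Lemma \ref{lemmagenericpoints}(iii) it lies on no $\Spec R_{\underline{\mathcal F},w'}$ with $w'\neq w$; therefore the localizations of $R_{\underline{\mathcal F}}$ and of $R_{\underline{\mathcal F},w}$ at $\eta$ coincide (the ideals cutting out the other $R_{\underline{\mathcal F},w'}$ in $R$ become the unit ideal after localizing at $\eta$), so $(R_{\underline{\mathcal F}}/\underline z R_{\underline{\mathcal F}})_\eta=(S_{\underline{\mathcal F},w})_\eta$, a field since $S_{\underline{\mathcal F},w}$ is reduced (Lemma \ref{lemmagenericpoints}(ii)). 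Thus $R_{\underline{\mathcal F}}/\underline z R_{\underline{\mathcal F}}$ is reduced, and being reduced without embedded primes it embeds into $\prod_\eta\kappa(\eta)$, the product over its minimal primes; since these $\eta$ are also the minimal primes of $S_{\underline{\mathcal F}}$ and the localizations of $S_{\underline{\mathcal F}}$ there are again $\kappa(\eta)$, the composite $R_{\underline{\mathcal F}}/\underline z R_{\underline{\mathcal F}}\to S_{\underline{\mathcal F}}$ followed by $S_{\underline{\mathcal F}}\hookrightarrow\prod_\eta\kappa(\eta)$ is exactly this embedding, whence the composite is injective. This establishes both isomorphisms, the equalities \eqref{intersectionasschemes}, and --- since $\Spec S_{\underline{\mathcal F}}=V(\underline z)\cap\Spec R_{\underline{\mathcal F}}$ --- that $\Spec S_{\underline{\mathcal F}}$ is cut out in $\Spec R_{\underline{\mathcal F}}$ by the $q$ functions $z_1,\dots,z_q$.

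I expect the main obstacle to be the second equality in \eqref{intersectionasschemes}: that cutting the scheme-theoretic image $\Spec R_{\underline{\mathcal F}}\subset\Spec R$ by $\Spec S$ returns $\Spec S_{\underline{\mathcal F}}$ as a scheme, not merely up to nilpotents --- an equality that fails for a general non-flat base change. The two inputs that make it work are the Cohen--Macaulayness of $R_{\underline{\mathcal F}}$ (forcing $\underline z$ to be a regular sequence, so that no embedded components appear after cutting) and Lemma \ref{lemmagenericpoints}(iii) (confining each generic point of the cut to a single component of $\Spec R_{\underline{\mathcal F}}$, which is precisely what rules out generic nilpotents).
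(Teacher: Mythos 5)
Your proposal is correct and follows essentially the same route as the paper: reduce to showing $R_{\underline{\mathcal F}}\otimes_{S_\infty}\mathcal{O}_L$ is reduced, identify the underlying topological space with $\bigcup_w\Spec S_{\underline{\mathcal F},w}$ via Proposition \ref{propcomparecomparelocalrings}, use the Cohen--Macaulayness of $R_{\underline{\mathcal F}}$ plus the dimension drop by exactly $q$ to get Cohen--Macaulayness (no embedded primes) after cutting, and check generic reducedness at the generic points using Lemma \ref{lemmagenericpoints}(iii). Your explicit regular sequence $\underline z$ and the embedding into the product of residue fields at minimal primes are just a more hands-on phrasing of the paper's appeal to Serre's criterion and the surjection onto the reduced ring $S_{\underline{\mathcal F}}$.
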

\begin{proof}
From \eqref{diagpatching}, we deduce a sequence of surjective maps
\[ R_{\underline{\mathcal{F}}}\otimes_{S_\infty}\mathcal{O}_L\twoheadrightarrow R_{\underline{\mathcal{F}}}\otimes_RS\twoheadrightarrow S_{\mathcal{F}}.\]

First note that  $\Spec R_{\underline{\mathcal{F}}}\times_{\Spec S_\infty} \Spec \mathcal{O}_L$ is cut out by $q$ equations in $\Spec R_{\underline{\mathcal{F}}}$.

By definition $\Spec R_{\underline{\mathcal{F}}}=\bigcup_{w\geq w_{\underline{F}}} \Spec R_{\underline{{\mathcal{F}}},w}$ and $\Spec S_{\underline{\mathcal{F}}}=\bigcup_{w\geq w_{\underline{F}}} \Spec S_{\underline{\mathcal{F}},w}$ as topological spaces. Consequently we have an equality of sets
\begin{align*} \Spec R_{\underline{\mathcal{F}}}\times_{\Spec S_\infty}\Spec\mathcal{O}_L=&\bigcup_{w\geq w_{\underline{\mathcal{F}}}}(\Spec R_{\underline{\mathcal{F}},w}\times_{\Spec S_\infty}\Spec \mathcal{O}_L)\\
&=\bigcup_{w\geq w_{\underline{\mathcal{F}}}}(\Spec R_{\underline{\mathcal{F}},w}\times_{\Spec R}\Spec S) \\
&=\Spec S
\end{align*}
Indeed $\Spec S_{\underline{\mathcal{F}},w}=\Spec R_{\underline{\mathcal{F}},w}\times_{\Spec R} \Spec S$ by Proposition \ref{propcomparecomparelocalrings}. 
Hence $(\ref{intersectionasschemes})$ is true on the level of topological spaces. As $\Spec S_{\underline{\mathcal{F}}}$ is reduced it remains to show that $\Spec R_{\underline{\mathcal{F}}}\times_{\Spec S_\infty} \Spec \mathcal{O}_L$ is reduced as well. 

We have
\[\dim (\Spec R_{\underline{\mathcal{F}}}\times_{\Spec S_\infty} \Spec \mathcal{O}_L)=\dim \Spec S_{\underline{\mathcal{F}}}=\dim  \Spec R_{\underline{\mathcal{F}}}-q\]
and consequently $\Spec R_{\underline{\mathcal{F}}}\times_{\Spec S_\infty} \Spec \mathcal{O}_L$ is Cohen-Macaulay as $\Spec R_{\underline{\mathcal{F}}}$ is (see e.g.~\cite[Cor.~16.5.6]{EGAIV1}). \\
By \cite[Proposition 5.8.5]{EGAIV2}, it remains to prove that $\Spec R_{\underline{\mathcal{F}}}\times_{\Spec S_\infty} \Spec \mathcal{O}_L$ is generically reduced. 
Let $\eta\in \Spec R_{\underline{\mathcal{F}}}\times_{\Spec S_\infty} \Spec \mathcal{O}_L$ be the generic point of some irreducible component and write $\mathfrak{q}$ for the corresponding prime ideal of $R$. Then $\eta$ is the generic point of an irreducible component in $\Spec S_{\underline{\mathcal{F}},w}$ for some $w$ and
it follows that 
\[(\mathcal{O}_L\otimes_{S_\infty} R_{\underline{\mathcal{F}}})_{\mathfrak{q}}=\mathcal{O}_L\otimes_{S_\infty} (R_{\underline{\mathcal{F}}})_{\mathfrak{q}}=\mathcal{O}_L\otimes_{S_\infty} (R_{\underline{\mathcal{F}},w})_{\mathfrak{q}}=(S_{\underline{\mathcal{F}},w})_{\mathfrak{q}},\]
where the second equality is a consequence of Lemma $\ref{lemmagenericpoints}$ (iii) and the last equality is Proposition \ref{propcomparecomparelocalrings}.
As $(S_{\underline{\mathcal{F}},w})_{\mathfrak{q}}$ is reduced so is $(\mathcal{O}_L\otimes_{S_\infty} R_{\underline{\mathcal{F}}})_{\mathfrak{q}}$ and the claim follows.
\end{proof}

Let us write $R_{\cris}$ for the quotient of $R$ such that for all $v\in S_p$ and any morphism $R\rightarrow A$ (for some finite dimensional $L$-algebra $A$) the $\mathcal{G}_{E_{\vtilde}}$ representation on $A^n$ induced by $R_{\overline{\rho}_v}\rightarrow R\rightarrow A$ is crystalline. This quotient exists by the main result of \cite{Kisindef}.

If $A$ is a complete noetherian local ring, we write $t_A$ for the tangent space of the functor $\Spf A$, ie $t_A\coloneqq T\Spf A$.

\begin{lemm}\label{lemmaSelmergroup}
\begin{enumerate}[(i)]
\item The local ring $R_{\cris}$ is formally smooth of dimension $q$.
\item The tangent spaces of $R_{\cris}$ and $S$ intersect trivially inside $t_R$, i.e.~
 \[t_{R_{\cris}}\cap t_S=0.\] 
\item There is an inclusion $t_{R_{\cris}}\subset t_{R_{\underline{\mathcal{F}}}}$.
\end{enumerate}
\end{lemm}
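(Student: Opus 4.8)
\emph{Overview.} The three assertions are proved in turn; (i) and (iii) are formal manipulations of the tensor‑product descriptions of the rings, and (ii) is where the automorphy enters. Write $R_{\vtilde}:=\widehat{\mathcal{O}}_{(\Spf R_{\overline\rho_{\vtilde}})^{\rig},\rho_{\vtilde}}$ for $v\in S$, so that, compatibly with the decomposition of $R_\infty$ in \eqref{diagpatching}, $R$ is the completed tensor product over $L$ of the $R_{\vtilde}$ and of $L\dbl y_1,\dots,y_g\dbr$.

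\emph{Proof of (i).} The crystalline condition defining $R_{\cris}$ is imposed place by place at the $v\mid p$, so $R_{\cris}$ is obtained from $R$ by replacing each factor $R_{\vtilde}$, $v\mid p$, by its maximal crystalline quotient $(R_{\vtilde})^{\cris}$. For $v\mid p$ the ring $(R_{\vtilde})^{\cris}$ pro‑represents $\mathfrak{X}_{\rho_{\vtilde}}^{\cris}$, which, $\rho_{\vtilde}$ being $\varphi$‑generic crystalline of regular Hodge–Tate type, is formally smooth of dimension $n^2+[F_v:\Qp]\tfrac{n(n-1)}{2}$ by Kisin (recalled at the end of \S\ref{reminderqtriangulinecomponents}). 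For $v\in S\setminus S_p$, and for the factor $L\dbl y_i\dbr$, the relevant factor is formally smooth — of dimension $n^2$, resp.\ $1$ — since $\mathcal{X}_{\overline\rho^p}$ is smooth at the image $\rho^p$ of $\rho$ (\cite{Caraiani}, \cite{BLGGT}, as in the proof of Lemma \ref{lemmagenericpoints}). A completed tensor product over $L$ of formally smooth complete local $L$‑algebras with residue field $L$ is again such, with dimensions adding, so $R_{\cris}$ is formally smooth of dimension
\[
\sum_{v\mid p}\Big(n^2+[F_v:\Qp]\tfrac{n(n-1)}{2}\Big)+n^2|S\setminus S_p|+g=n^2|S|+[F:\Q]\tfrac{n(n-1)}{2}+g=q.
\]

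\emph{Proof of (iii).} The end of \S\ref{reminderqtriangulinecomponents} records the inclusion of functors $\mathfrak{X}_{\rho_{\vtilde}}^{\cris}\subset\mathfrak{X}_{\rho_{\vtilde},\mathcal{F}_v[\frac1t]}^{w_0}$; since $\mathfrak{X}^{\cris}$ is a closed subfunctor this is a surjection $R_{\overline\rho_{\vtilde},\mathcal{F}_v}^{w_0}\twoheadrightarrow (R_{\vtilde})^{\cris}$ at each $v\mid p$, compatible with the two surjections of these rings from $R_{\vtilde}$. Taking the completed tensor product over $L$ with the factor $\widehat{\mathcal{O}}_{\mathcal{X}_{\overline\rho^p}\times\mathbb{U}^g,\rho^p}$ common to the decomposition of $R_{\underline{\mathcal{F}},w_0}$ recalled before Lemma \ref{lemmagenericpoints} and to the description of $R_{\cris}$ from (i), we obtain a surjection $R_{\underline{\mathcal{F}},w_0}\twoheadrightarrow R_{\cris}$ compatible with the canonical surjections of both rings from $R$. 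Since $w_{\underline{\mathcal{F}}}\preceq w_0$, $\Spec R_{\underline{\mathcal{F}},w_0}$ is one of the pieces of the union whose scheme‑theoretic image is $\Spec R_{\underline{\mathcal{F}}}$, and since $R\to R_{\underline{\mathcal{F}},w_0}$ is surjective (\cite[Lemma 4.3.3]{BHS3}) the induced map $R_{\underline{\mathcal{F}}}\twoheadrightarrow R_{\underline{\mathcal{F}},w_0}$ is surjective as well. Composing, $R_{\underline{\mathcal{F}}}\twoheadrightarrow R_{\cris}$ compatibly with the surjections from $R$; evaluating on $L[\varepsilon]$ gives $t_{R_{\cris}}\subset t_{R_{\underline{\mathcal{F}}}}$ inside $t_R$.

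\emph{Proof of (ii), and the main obstacle.} For the quotients $R\twoheadrightarrow S$ and $R\twoheadrightarrow R_{\cris}$ one has $t_S\cap t_{R_{\cris}}=t_{S\otimes_R R_{\cris}}$ inside $t_R$, so it suffices to show that the complete local ring $S_{\cris}:=S\otimes_R R_{\cris}$ — the ring of global polarized crystalline deformations of $\rho$ of the fixed regular Hodge–Tate type — equals $L$. As $\mathfrak{m}_{S_\infty}$ is generated by $q$ elements and $p$ is invertible, $S_{\cris}$ is a quotient of $R_{\cris}\otimes_{S_\infty}L=R_{\cris}/\mathfrak{m}_{S_\infty}R_{\cris}$; since $R_{\cris}$ is formally smooth of dimension $q$ by (i), the latter equals $L$ if and only if the tangent map $t_{R_{\cris}}\to t_{S_\infty}$ is injective, hence an isomorphism of $q$‑dimensional spaces. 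This injectivity is the arithmetic input: via \eqref{diagpatching} it amounts to the fact that the crystalline quotient $R_\infty^{\cris}$ of $R_\infty$ (fixing the regular Hodge–Tate type of $\rho$ at all $v\mid p$) has Krull dimension $\dim S_\infty=q+1$ — the defect‑zero numerical coincidence available for the definite unitary group $G$ — while the patched module $M_\infty$ is finite free over $S_\infty$ and, because $(\overline\rho,\overline V)$ is $(G,U)$‑automorphic with $\overline\rho(\mathcal{G}_{E(\zeta_p)})$ adequate, has full support over $R_\infty^{\cris}$; hence $R_\infty^{\cris}$ is a maximal Cohen–Macaulay, faithful, finite $S_\infty$‑module, so it is finite flat over $S_\infty$ and $R_{\overline\rho,\mathcal{S}}^{\cris}=R_\infty^{\cris}\otimes_{S_\infty}\mathcal{O}_L$ is finite flat over $\mathcal{O}_L$; equivalently the adjoint Bloch–Kato Selmer group of $\rho$ vanishes, so $S_{\cris}=L$ and $t_S\cap t_{R_{\cris}}=0$. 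The main obstacle of the whole lemma is this step (ii): (i) and (iii) are bookkeeping around cited local statements, whereas (ii) is the vanishing of the crystalline adjoint Selmer group at the automorphic point $\rho$, resting on the full Taylor–Wiles–Kisin patching construction, i.e.\ on the automorphy lifting and adequacy hypotheses (cf.\ also the smoothness of automorphic points, \cite{Allen1}).
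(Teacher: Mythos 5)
Parts (i) and (iii) are correct and are essentially the paper's own argument: (i) is the product of Kisin's smoothness and dimension statement for the crystalline deformation rings at $v\mid p$ with the smoothness of $\mathcal{X}_{\overline{\rho}^p}$ at $\rho^p$, and (iii) comes from the chain $\Spec R_{\cris}\subset\Spec R_{\underline{\mathcal{F}},w_0}\subset\Spec R_{\underline{\mathcal{F}}}$ furnished by the inclusion $\mathfrak{X}^{\cris}_{(\rho,V)}\subset\mathfrak{X}^{w_0}_{(\rho,V),\mathcal{M}}$ recalled in Section \ref{reminderqtriangulinecomponents}; your extra bookkeeping there is fine.

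For (ii), however, the paper does not reprove anything: it quotes \cite[Theorem A.1]{Allen1} verbatim, and your attempt to rederive that statement by patching has a genuine gap precisely at the decisive step. Reducing to $S_{\cris}:=S\otimes_R R_{\cris}=L$ and to the injectivity of $t_{R_{\cris}}\rightarrow t_{S_\infty}$ is fine (modulo the harmless slip that the relevant ideal is the augmentation ideal of $S_\infty$, not $\mathfrak{m}_{S_\infty}$). But you then assert that this injectivity ``amounts to'' $R_\infty^{\cris}$ being finite flat over $S_\infty$, equivalently $R^{\cris}_{\overline{\rho},\mathcal{S}}$ finite flat over $\mathcal{O}_L$, ``equivalently the adjoint Bloch--Kato Selmer group of $\rho$ vanishes''. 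That equivalence is false: finiteness (even finite flatness) of the crystalline locus over $\mathcal{O}_L$ only says it is zero-dimensional at $\rho$, while the vanishing of $H^1_f$ is the statement that its tangent space at $\rho$ is zero, i.e.\ that the Artinian fiber is reduced. A finite flat map of regular local rings of the same dimension can perfectly well have non-reduced fibers (think of $L\dbl x\dbr\rightarrow L\dbl x\dbr$, $x\mapsto x^2$), so your argument proves at best that $S_{\cris}$ is Artinian, not that $t_{S_{\cris}}=0$. Closing this gap is exactly the content of Allen's theorem: beyond ``$M_\infty$ finite free over $S_\infty$ and maximal Cohen--Macaulay over $R_\infty^{\cris}$'' one needs a further freeness-plus-multiplicity comparison (freeness of the completed $M_\infty$ over the regular local ring $R_{\cris}$ by Auslander--Buchsbaum, together with an identification of its generic rank with the dimension of the fiber of classical forms at the eigensystem of $\rho$) to force the fiber ring to be $L$ on the nose. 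In addition, your appeal to ``full support of $M_\infty$ over $R_\infty^{\cris}$'' as a consequence of automorphy and adequacy is itself an automorphy-lifting statement that this paper only invokes as a conjecture (assumption (c) in the final subsection); for the lemma one only needs that $\rho$ lies in the support, which is true since $\rho$ is $(G,U)$-automorphic, so you should restrict the claim accordingly --- but even granting it, the final deduction does not go through as written. The economical fix is the paper's: cite \cite[Theorem A.1]{Allen1} for (ii).
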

\begin{proof}
\noindent (i) This follows from the smoothness of $\mathcal{X}_{\overline{\rho}^p}$ at the image of $\rho$ (see above) and the fact that the generic fiber of a crystalline deformation rings is smooth of dimension $n^2+[F_v:\Q_p]\tfrac{n(n-1)}{2}$ by \cite{Kisindef} and the definition of $q$.\\
\noindent (ii) With the notation introduced here this is the statement of \cite[Theorem A.1]{Allen1}.\\
\noindent (iii) This is a direct consequence of $\Spec R_{\cris}\subset \Spec R_{\underline{\mathcal{F}},w_0}\subset \Spec R_{\underline{\mathcal{F}}}$, which follows from section \ref{reminderqtriangulinecomponents}.
\end{proof}
\begin{coro}\label{directsumdecompo}
There is a direct sum decomposition 
\[t_{R_{\cris}}\oplus t_{S_{\underline{\mathcal{F}}}}=t_{R_{\underline{\mathcal{F}}}}\]
of subspaces of $t_R$. 
\end{coro}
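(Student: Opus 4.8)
The plan is to establish the direct sum decomposition by combining the two halves of Lemma~\ref{lemmaSelmergroup} with a dimension count coming from Lemma~\ref{lemmaqequations}. First I would record that $t_{R_{\cris}}\subset t_{R_{\underline{\mathcal{F}}}}$ by Lemma~\ref{lemmaSelmergroup}~(iii), and that $t_{S_{\underline{\mathcal{F}}}}\subset t_{R_{\underline{\mathcal{F}}}}$ as well: indeed $\Spec S_{\underline{\mathcal{F}}}\subset\Spec R_{\underline{\mathcal{F}}}$ is the closed subscheme exhibited in Lemma~\ref{lemmaqequations}, so there is a surjection $R_{\underline{\mathcal{F}}}\twoheadrightarrow S_{\underline{\mathcal{F}}}$ inducing an inclusion on tangent spaces. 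Hence both summands live inside $t_{R_{\underline{\mathcal{F}}}}$ and the sum $t_{R_{\cris}}+t_{S_{\underline{\mathcal{F}}}}$ makes sense as a subspace there.

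Next I would check that the sum is direct. Since $t_{S_{\underline{\mathcal{F}}}}\subset t_S$ (because $S_{\underline{\mathcal{F}}}$ is a quotient of $S$, using $\Spec S_{\underline{\mathcal{F}}}\subset \Spec S$ from Lemma~\ref{lemmaqequations}) and $t_{R_{\cris}}\cap t_S=0$ by Lemma~\ref{lemmaSelmergroup}~(ii), we get $t_{R_{\cris}}\cap t_{S_{\underline{\mathcal{F}}}}\subset t_{R_{\cris}}\cap t_S=0$. Therefore $t_{R_{\cris}}\oplus t_{S_{\underline{\mathcal{F}}}}$ is an internal direct sum inside $t_{R_{\underline{\mathcal{F}}}}$.

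It remains to see that this direct sum fills up all of $t_{R_{\underline{\mathcal{F}}}}$, and this is the main point. The idea is a dimension count: $R_{\underline{\mathcal{F}}}$ is Cohen--Macaulay of dimension $q+n[F:\Q]$ by Lemma~\ref{lemmagenericpoints}~(i), while $S_{\underline{\mathcal{F}}}$ has dimension $n[F:\Q]$ by Lemma~\ref{lemmagenericpoints}~(ii); and by Lemma~\ref{lemmaqequations} the closed subscheme $\Spec S_{\underline{\mathcal{F}}}\subset\Spec R_{\underline{\mathcal{F}}}$ is cut out by exactly $q$ equations (the images of a regular sequence of parameters in $S_\infty$). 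Since the drop in dimension equals the number $q$ of equations, these $q$ equations form a regular sequence in the Cohen--Macaulay ring $R_{\underline{\mathcal{F}}}$; consequently their images in the cotangent space $\mathfrak{m}_{R_{\underline{\mathcal{F}}}}/\mathfrak{m}_{R_{\underline{\mathcal{F}}}}^2$ are linearly independent, so the surjection $t_{R_{\underline{\mathcal{F}}}}\twoheadrightarrow t_{S_{\underline{\mathcal{F}}}}$ has kernel of dimension exactly $q$. On the other hand $R_{\cris}$ is formally smooth of dimension $q$ by Lemma~\ref{lemmaSelmergroup}~(i), so $\dim_L t_{R_{\cris}}=q$. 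Thus $t_{R_{\cris}}$ is a $q$-dimensional subspace of $t_{R_{\underline{\mathcal{F}}}}$ meeting $t_{S_{\underline{\mathcal{F}}}}$ trivially, while $t_{S_{\underline{\mathcal{F}}}}$ has codimension $q$; a count of dimensions forces $t_{R_{\cris}}\oplus t_{S_{\underline{\mathcal{F}}}}=t_{R_{\underline{\mathcal{F}}}}$.

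**Main obstacle.** The delicate step is the claim that the $q$ equations cutting out $\Spec S_{\underline{\mathcal{F}}}$ inside the Cohen--Macaulay ring $R_{\underline{\mathcal{F}}}$ form a regular sequence, equivalently that they are part of a system of parameters, equivalently that $R_{\underline{\mathcal{F}}}\otimes_{S_\infty}\mathcal{O}_L$ has dimension exactly $\dim R_{\underline{\mathcal{F}}}-q$. This is precisely what Lemma~\ref{lemmaqequations} together with the dimension statements of Lemma~\ref{lemmagenericpoints} provide, so the work has really been front-loaded into those lemmas; here one just has to assemble them correctly and invoke the standard fact that in a Cohen--Macaulay local ring an ideal generated by $c$ elements whose quotient has dimension exactly $c$ less is generated by a regular sequence, so those generators are linearly independent modulo $\mathfrak{m}^2$.
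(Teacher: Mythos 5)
Your overall strategy is the same as the paper's: both summands sit inside $t_{R_{\underline{\mathcal{F}}}}$, the intersection $t_{R_{\cris}}\cap t_{S_{\underline{\mathcal{F}}}}$ vanishes because $t_{S_{\underline{\mathcal{F}}}}\subset t_S$ and Lemma \ref{lemmaSelmergroup} (ii), and the conclusion follows from a dimension count using $\dim t_{R_{\cris}}=q$ and the fact that $\Spec S_{\underline{\mathcal{F}}}$ is cut out in $\Spec R_{\underline{\mathcal{F}}}$ by $q$ equations (Lemma \ref{lemmaqequations}). However, the step you single out as the main point is wrong as stated: from Cohen--Macaulayness and the dimension drop you may indeed conclude that the $q$ equations form a regular sequence, but a regular sequence need \emph{not} have linearly independent images in $\mathfrak{m}_{R_{\underline{\mathcal{F}}}}/\mathfrak{m}_{R_{\underline{\mathcal{F}}}}^2$. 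For instance, in $k[[x,y]]$ the single element $x^2$ is a regular sequence cutting out a quotient of dimension exactly one less, yet its image in $\mathfrak{m}/\mathfrak{m}^2$ is zero and the tangent space does not drop. So your argument does not establish that $\codim(t_{S_{\underline{\mathcal{F}}}},t_{R_{\underline{\mathcal{F}}}})$ equals $q$, and that equality cannot be obtained this way (it is true here, but only a posteriori, as a consequence of the corollary itself).

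Fortunately the exact value is not needed, and your own inputs already contain the repair: since the kernel of $R_{\underline{\mathcal{F}}}\twoheadrightarrow S_{\underline{\mathcal{F}}}$ is generated by $q$ elements, their images span a subspace of $\mathfrak{m}_{R_{\underline{\mathcal{F}}}}/\mathfrak{m}_{R_{\underline{\mathcal{F}}}}^2$ of dimension at most $q$, whence $\codim(t_{S_{\underline{\mathcal{F}}}},t_{R_{\underline{\mathcal{F}}}})\leq q$ with no Cohen--Macaulay or regular-sequence input at all. Combined with $\dim t_{R_{\cris}}=q$, $t_{R_{\cris}}\subset t_{R_{\underline{\mathcal{F}}}}$ and the trivial intersection, this inequality already forces $t_{R_{\cris}}\oplus t_{S_{\underline{\mathcal{F}}}}=t_{R_{\underline{\mathcal{F}}}}$, which is exactly how the paper argues. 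Replace your claimed equality by this inequality and your proof becomes correct and coincides with the paper's.
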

\begin{proof}
As $\Spec S_{\underline{\mathcal{F}}}\subset \Spec S$ we have $t_{S_{\underline{\mathcal{F}}}}\subset t_S$ and hence $t_{S_{\underline{\mathcal{F}}}}\cap t_{R_{\cris}}=0$ by Lemma \ref{lemmaSelmergroup} (ii). 
Moreover $t_{R_{\cris}}\subset t_{R_{\underline{\mathcal{F}}}}$ and $t_{R_{\cris}}$ has dimension $q$. 
The claim now follows from the fact that $\Spec S_{\underline{\mathcal{F}}}\subset \Spec R_{\underline{\mathcal{F}}}$ is cut out by $q$ equations by Lemma \ref{lemmaqequations} and hence
\[\codim(t_{S_{\underline{\mathcal{F}}}},t_{R_{\underline{\mathcal{F}}}})\leq q.\]
\end{proof}

\begin{coro}\label{globaltangentspacesurjection}
The canonical map of tangent spaces
\[\bigoplus_{\underline{\mathcal{F}}}t_{S_{\underline{\mathcal{F}}}}\longrightarrow t_S\]
is a surjection. Here the sum is taken over all tuples $\underline{\mathcal{F}}=(\mathcal{F}_v)_{v\in S_p}$ of Frobenius stable flags $\mathcal{F}_v$ of $D_{\cris}(\rho_v)$.
\end{coro}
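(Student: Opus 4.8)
The plan is to deduce this global surjectivity from the local main theorem (Corollary~\ref{mainlocalcoro}) by a linear-algebra argument carried out entirely inside $t_R$, exploiting the direct sum decomposition of Corollary~\ref{directsumdecompo} and the ``$q$ equations'' estimate of Lemma~\ref{lemmaqequations}. All of $t_S$, $t_{R_{\cris}}$, $t_{R_{\underline{\mathcal{F}}}}$, $t_{S_{\underline{\mathcal{F}}}}$ are viewed as subspaces of $t_R$ via the canonical surjections from $R$, and the map in the statement is $(v_{\underline{\mathcal{F}}})_{\underline{\mathcal{F}}}\mapsto\sum_{\underline{\mathcal{F}}}v_{\underline{\mathcal{F}}}$, so that the assertion is the equality $\sum_{\underline{\mathcal{F}}}t_{S_{\underline{\mathcal{F}}}}=t_S$.

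The first, and main, step is to prove the global avatar of the local theorem, namely $\sum_{\underline{\mathcal{F}}}t_{R_{\underline{\mathcal{F}}}}=t_R$. Here I would first record the decomposition
\[ R\cong\Big(\widehat{\bigotimes}_{v\in S}\widehat{\mathcal{O}}_{\mathcal{X}_{\overline{\rho}_{\vtilde}},\rho_{\vtilde}}\Big)\,\widehat{\otimes}_L\,L\dbl y_1,\dots,y_g\dbr \]
coming from the shape of $R_\infty$ in~\eqref{diagpatching}, together with the compatible decomposition
\[ R_{\underline{\mathcal{F}}}\cong\Big(\widehat{\bigotimes}_{v\mid p}R_{\overline{\rho}_{\vtilde},\mathcal{F}_v}\Big)\,\widehat{\otimes}_L\,\widehat{\mathcal{O}}_{\mathcal{X}_{\overline{\rho}^p}\times\mathbb{U}^g,\rho^p}, \]
obtained from the isomorphism~\eqref{comparisontriangulinedeformation}, the description of the irreducible components in §\ref{reminderqtriangulinecomponents} (so that taking the union over $w$ recovers the full quasi-trianguline rings $R_{\overline{\rho}_{\vtilde},\mathcal{F}_v}$, not merely their components), and the fact that both sides are reduced of the same dimension --- for which one uses Lemma~\ref{lemmagenericpoints}(i), geometric reducedness of the $R_{\overline{\rho}_{\vtilde},\mathcal{F}_v}$, and the automorphy input that $\mathcal{X}_{\overline{\rho}^p}$ is smooth at $\rho^p$. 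Passing to tangent spaces, $t_R$ and $t_{R_{\underline{\mathcal{F}}}}$ acquire parallel decompositions with a common ``away from $p$ plus formally smooth'' summand $t_0$, and with $v$-th slot (for $v\mid p$) equal to $t_{R_{\overline{\rho}_{\vtilde}}}$ inside $t_R$ and to $t_{R_{\overline{\rho}_{\vtilde},\mathcal{F}_v}}$ inside $t_{R_{\underline{\mathcal{F}}}}$. Summing over all tuples $\underline{\mathcal{F}}=(\mathcal{F}_v)_{v\mid p}$, and using that the slots at distinct $v\mid p$ vary independently, reduces $\sum_{\underline{\mathcal{F}}}t_{R_{\underline{\mathcal{F}}}}=t_R$ to the local identity $\sum_{\mathcal{F}_v\in\Tri(D_{\rig}(\rho_{\vtilde}))}t_{R_{\overline{\rho}_{\vtilde},\mathcal{F}_v}}=t_{R_{\overline{\rho}_{\vtilde}}}$ at each place $v\mid p$; this is the framed form of Corollary~\ref{mainlocalcoro}, which follows from it since the passage to framed deformation functors is formally smooth.

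The second step is pure linear algebra. By Corollary~\ref{directsumdecompo}, $t_{R_{\underline{\mathcal{F}}}}=t_{R_{\cris}}\oplus t_{S_{\underline{\mathcal{F}}}}$ inside $t_R$ for every $\underline{\mathcal{F}}$; summing and invoking Step~1 gives $t_R=t_{R_{\cris}}+\sum_{\underline{\mathcal{F}}}t_{S_{\underline{\mathcal{F}}}}$. Since each $S_{\underline{\mathcal{F}}}$ is a quotient of $S$ we have $\sum_{\underline{\mathcal{F}}}t_{S_{\underline{\mathcal{F}}}}\subseteq t_S$, and also $t_{R_{\cris}}+t_S\subseteq t_R$; the chain $t_R=t_{R_{\cris}}+\sum_{\underline{\mathcal{F}}}t_{S_{\underline{\mathcal{F}}}}\subseteq t_{R_{\cris}}+t_S\subseteq t_R$ therefore forces $t_{R_{\cris}}+\sum_{\underline{\mathcal{F}}}t_{S_{\underline{\mathcal{F}}}}=t_{R_{\cris}}+t_S$. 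By Lemma~\ref{lemmaSelmergroup}(ii) one has $t_{R_{\cris}}\cap t_S=0$, hence a fortiori $t_{R_{\cris}}\cap\sum_{\underline{\mathcal{F}}}t_{S_{\underline{\mathcal{F}}}}=0$, so both sums are direct; comparing dimensions --- or directly: an element of $t_S$ outside $\sum_{\underline{\mathcal{F}}}t_{S_{\underline{\mathcal{F}}}}$ would contradict the last equality --- yields $\sum_{\underline{\mathcal{F}}}t_{S_{\underline{\mathcal{F}}}}=t_S$, which is the claim.

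The hard part is Step~1, specifically the assertion that $R_{\underline{\mathcal{F}}}$ is \emph{on the nose} the completed tensor product of the local quasi-trianguline deformation rings with a formally smooth ring, and not just up to reduction: without this, $t_{R_{\underline{\mathcal{F}}}}$ might be strictly smaller than $\bigoplus_{v\mid p}t_{R_{\overline{\rho}_{\vtilde},\mathcal{F}_v}}\oplus t_0$, and the reduction to the local theorem would break down. This is exactly the point at which one must combine the reducedness and dimension count of Lemma~\ref{lemmagenericpoints}(i) with geometric reducedness of the local quasi-trianguline rings and the automorphy/smoothness of $\mathcal{X}_{\overline{\rho}^p}$. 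Everything downstream --- the direct-sum bookkeeping and the final sandwich --- is then formal.
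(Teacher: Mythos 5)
Your proposal is correct and takes essentially the same route as the paper: your Step~1 is exactly the paper's appeal to Corollary~\ref{mainlocalcoro} for the surjectivity of $\bigoplus_{\underline{\mathcal{F}}}t_{R_{\underline{\mathcal{F}}}}\rightarrow t_R$ (you merely spell out the identification of $R_{\underline{\mathcal{F}}}$ with the patched quasi-trianguline ring, which the paper leaves implicit but has prepared via \eqref{comparisontriangulinedeformation} and Lemma~\ref{lemmagenericpoints}). Your Step~2 is the paper's diagram chase --- $\alpha$ an isomorphism by Corollary~\ref{directsumdecompo}, $\beta$ injective by Lemma~\ref{lemmaSelmergroup}(ii) --- rewritten as subspace manipulations inside $t_R$, so the two arguments coincide.
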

\begin{proof}
Consider the commutative diagram
\[\begin{tikzcd}
\bigoplus_{\underline{\mathcal{F}}} t_{R_{\underline{\mathcal{F}}}} \ar[r, twoheadrightarrow] \ar[d, twoheadrightarrow] & t_R \ar[d, twoheadrightarrow] \\
\bigoplus_{\underline{\mathcal{F}}} t_{R_{\underline{\mathcal{F}}}}/t_{R_{\cris}} \ar[r] & t_R/t_{R_{\cris}}  \\
\bigoplus_{\underline{\mathcal{F}}} t_{S_{\underline{\mathcal{F}}}} \ar[r, "\gamma"] \ar[u,"\alpha"] & t_S. \ar[u, "\beta"] 
\end{tikzcd}\]
It follows from Corollary \ref{directsumdecompo} that $\alpha$ is an isomorphism, and from Lemma \ref{lemmaSelmergroup} (ii) that $\beta$ is injective. Moreover the upper horizontal arrow is surjective by Corollary \ref{mainlocalcoro}. It follows from an obvious diagram chaise that $\gamma$ is a surjection. 
\end{proof}
\begin{rema}
We point out that it is a direct consequence of the proof of Corollary \ref{globaltangentspacesurjection} that the map
\[t_S\longrightarrow t_R/t_{R_{\cris}}\]
is an isomorphism. 
\end{rema}
\subsection{Proof of Theorem \ref{globalmaintheorem}}

We now prove the main result, Theorem \ref{globalmaintheorem}. With the above preparation, the final argument just follows the original method of Gouvea-Mazur \cite{Mazur} and Chenevier \cite{Chefougere} in the case of modular forms (i.e.~in the case $n=2$), resp.~in the case $n=3$.

Let us write $\mathcal{X}_{\overline{\rho},S}^{\aut, \sm}\subset\mathcal{X}_{\overline{\rho},S}^{\aut}$ for the smooth locus which is Zariski-open and dense in $\mathcal{X}_{\overline{\rho},S}^{\aut}$. 
Let us fix an irreducible component $C$ of $\mathcal{X}_{\overline{\rho},S}^{\aut}$. We need to show that $C$ is an irreducible component of $\mathcal{X}_{\overline{\rho},S}$.
By slight abuse of notations we write $C^{\sm}=C\cap \mathcal{X}_{\overline{\rho},S}^{\aut, \sm}$. 

As by definition $\mathcal{X}_{\overline{\rho},S}^{\aut}$ is the Zariski-closure of the $(G,U)$-automorphic points in $\mathcal{X}_{\overline{\rho},S}$, it follows that $C^{\sm}$ contains a $(G,U)$-automorphic point $\rho$. By the construction preceding Proposition \ref{propertiesofeigenvar} there is a point $y=(\rho,\delta)\in Y(\overline{\rho},U^p)$ and by Proposition \ref{propertiesofeigenvar} (ii) the classical, crystalline $\varphi$-generic points accumulate at $y$. It follows that there is a classical, crystalline $\varphi$-generic point $y'=(\rho',\delta')\in Y(U^p,\overline{\rho})$ such that $\rho'\in C^{\sm}$. 
We may replace $\rho$ by $\rho'$ (and $y$ by $y'$) and hence assume that $\rho$ is  $(G,U)$-automorphic and crystalline $\varphi$-generic.
\begin{prop}\label{globalequalityoftangentspaces}
There is an equality of tangent spaces
\[T_\rho \mathcal{X}_{\overline{\rho},S}^{\aut}=T_\rho \mathcal{X}_{\overline{\rho},S}.\]
\end{prop}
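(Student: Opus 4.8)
The plan is to reduce the statement to Corollary \ref{globaltangentspacesurjection}. Write $S^{\aut}$ for the complete local ring of $\mathcal{X}_{\overline{\rho},S}^{\aut}$ at $\rho$; since $\mathcal{X}_{\overline{\rho},S}^{\aut}$ is the (reduced) Zariski closure of the $(G,U)$-automorphic points inside $\mathcal{X}_{\overline{\rho},S}$, the ring $S^{\aut}$ is a quotient of $S$, and hence $t_{S^{\aut}}=T_\rho\mathcal{X}_{\overline{\rho},S}^{\aut}$ is a subspace of $t_S=T_\rho\mathcal{X}_{\overline{\rho},S}$. Thus the proposition amounts to proving the reverse inclusion $t_S\subset t_{S^{\aut}}$.

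The key geometric input I would establish is that for every tuple $\underline{\mathcal{F}}$ of Frobenius-stable flags of the $D_{\cris}(\rho_v)$ one has $\Spec S_{\underline{\mathcal{F}}}\subset\Spec S^{\aut}$ as closed subschemes of $\Spec S$, whence $t_{S_{\underline{\mathcal{F}}}}\subset t_{S^{\aut}}$. This is where automorphy enters: the projection $Y(U^p,\overline{\rho})\to\mathcal{X}_{\overline{\rho},S}$ sends classical points to $(G,U)$-automorphic points of $\mathcal{X}_{\overline{\rho},S}$, and since the classical points are Zariski-dense in $Y(U^p,\overline{\rho})$ (Proposition \ref{propertiesofeigenvar}(ii)) while $Y(U^p,\overline{\rho})$ is reduced (Proposition \ref{propertiesofeigenvar}(i)), this morphism factors through the closed immersion $\mathcal{X}_{\overline{\rho},S}^{\aut}\hookrightarrow\mathcal{X}_{\overline{\rho},S}$. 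The points of $Y(U^p,\overline{\rho})$ over $\rho$ that matter are the companion points $x_{\underline{\mathcal{F}},w}$, $w\geq w_{\underline{\mathcal{F}}}$ (Proposition \ref{propertiesofeigenvar}(iii)); identifying their complete local rings $S_{\underline{\mathcal{F}},w}$ on $Y_p(\overline{\rho})$ with those on $Y(U^p,\overline{\rho})$ by Proposition \ref{propcomparecomparelocalrings}, I conclude that each map $\Spec S_{\underline{\mathcal{F}},w}\to\Spec S$ factors through $\Spec S^{\aut}$, hence so does the scheme-theoretic image $\Spec S_{\underline{\mathcal{F}}}$ of $\coprod_{w\geq w_{\underline{\mathcal{F}}}}\Spec S_{\underline{\mathcal{F}},w}$; passing to tangent spaces gives $t_{S_{\underline{\mathcal{F}}}}\subset t_{S^{\aut}}$.

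To finish, I would chain inclusions. Corollary \ref{globaltangentspacesurjection} says the images of the $t_{S_{\underline{\mathcal{F}}}}$ span $t_S$; combined with the previous step this yields
\[ t_S=\sum_{\underline{\mathcal{F}}}t_{S_{\underline{\mathcal{F}}}}\subset t_{S^{\aut}}\subset t_S, \]
so $t_{S^{\aut}}=t_S$, i.e.\ $T_\rho\mathcal{X}_{\overline{\rho},S}^{\aut}=T_\rho\mathcal{X}_{\overline{\rho},S}$.

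The main obstacle is not in this proposition, whose proof is essentially formal once Corollary \ref{globaltangentspacesurjection} is available, but lies upstream: Corollary \ref{globaltangentspacesurjection} rests on the local surjectivity of Corollary \ref{mainlocalcoro} (hence on Theorem \ref{mainlocalthm} and ultimately on Theorem \ref{enveloppeBorels}), on the d\'evissage of Lemmas \ref{lemmagenericpoints}--\ref{lemmaqequations} identifying $\Spec S_{\underline{\mathcal{F}}}$ as a codimension-$q$ complete intersection inside $\Spec R_{\underline{\mathcal{F}}}$, and on Allen's transversality statement $t_{R_{\cris}}\cap t_S=0$ (Lemma \ref{lemmaSelmergroup}(ii)) feeding into Corollary \ref{directsumdecompo}. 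Within the present argument the only point requiring care is that $\rho$ has been chosen crystalline $\varphi$-generic inside $C^{\sm}$ so that the companion-point formalism of \cite{BHS3} applies; this was arranged just before the statement via the accumulation property of classical crystalline $\varphi$-generic points (Proposition \ref{propertiesofeigenvar}(ii)).
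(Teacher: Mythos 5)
Your argument is correct, and its skeleton coincides with the paper's: both reduce the nontrivial inclusion $t_S\subset T_\rho\mathcal{X}_{\overline{\rho},S}^{\aut}$ to showing $t_{S_{\underline{\mathcal{F}}}}\subset T_\rho\mathcal{X}_{\overline{\rho},S}^{\aut}$ for every tuple $\underline{\mathcal{F}}$ and then invoking Corollary \ref{globaltangentspacesurjection}. Where you genuinely differ is in how that containment is justified. The paper argues locally: using Proposition \ref{propertiesofeigenvar}(v) and the surjectivity of $\hat{\mathcal{O}}_{\mathcal{X}_{\overline{\rho},S},\rho}\rightarrow\hat{\mathcal{O}}_{Y(U^p,\overline{\rho}),x_{\underline{\mathcal{F}},w}}$, it finds neighbourhoods $V_{\underline{\mathcal{F}},w}$ of the companion points on which the projection $f:Y(U^p,\overline{\rho})\rightarrow\mathcal{X}_{\overline{\rho},S}$ is a closed immersion into a neighbourhood $U$ of $\rho$, deduces $V_{\underline{\mathcal{F}},w}\subset U\cap\mathcal{X}_{\overline{\rho},S}^{\aut}$ from density of classical points, and then uses that scheme-theoretic images commute with the flat base change to the completed local ring. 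You argue globally: since $Y(U^p,\overline{\rho})$ is reduced and its classical points are Zariski-dense and map to $(G,U)$-automorphic points, the morphism $f$ itself factors through the closed subspace $\mathcal{X}_{\overline{\rho},S}^{\aut}$, so each $S\rightarrow S_{\underline{\mathcal{F}},w}$ factors through $S^{\aut}$ by functoriality of completed local rings (via the identification of Proposition \ref{propcomparecomparelocalrings}), and the scheme-theoretic image $\Spec S_{\underline{\mathcal{F}}}$ lands in $\Spec S^{\aut}$ for purely formal reasons. Your route is somewhat more economical: it needs only the global Zariski-density of classical points, not their density in small neighbourhoods of the generally non-classical companion points $x_{\underline{\mathcal{F}},w}$ (a point the paper extracts from Proposition \ref{propertiesofeigenvar}(ii) and which requires a little care), and it dispenses with the local-finiteness and base-change steps; what the paper's local analysis yields in addition, though it is not needed for the tangent-space equality, is the explicit fact that $f$ restricts to closed immersions near the companion points. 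Your closing remarks correctly locate the real content upstream (Corollary \ref{mainlocalcoro}, Lemmas \ref{lemmagenericpoints}--\ref{lemmaqequations}, Lemma \ref{lemmaSelmergroup}(ii)) and the relevance of having chosen $\rho$ crystalline $\varphi$-generic.
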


\begin{proof}
The inclusion $T_\rho \mathcal{X}_{\overline{\rho},S}^{\aut}\subset T_\rho \mathcal{X}_{\overline{\rho},S}$ is obvious and we need to prove the converse inclusion.

After enlarging $L$ if necessary, we may assume that the point $\rho$ is an $L$-valued point of $\mathcal{X}_{\overline{\rho},S}$ and that $L$ contains all eigenvalues of the crystalline Frobenius on the Weil-Deligne representation $\mathrm{WD}(D_{\cris}(\rho_v))$ associated to $D_{\cris}(\rho_v)$ for all $v\in S_p$.

For each choice of a tuple $\underline{\mathcal{F}}$ of complete Frobenius stable flags $\mathcal{F}_v$ in $D_{\cris}(\rho_v)$ and each Weyl group element $w\preceq w_{\underline{\mathcal{F}}}$ we have constructed points $x_{\underline{\mathcal{F}},w}\in Y(U^p,\overline{\rho})$ that map to $\rho$ under the canonical projection $f:Y(U^p,\overline{\rho})\rightarrow \mathcal{X}_{\overline{\rho},S}$.
As $f$ is locally on the source and the target a finite morphism by Proposition \ref{propertiesofeigenvar} (v),  and as the induced map \[\hat{\mathcal{O}}_{\mathcal{X}_{\overline{\rho},S},\rho}\longrightarrow \hat{\mathcal{O}}_{Y(U^p,\overline{\rho}),x_{\underline{\mathcal{F}},w}}\] is a surjection (as a consequence of \cite[Lemma 4.3.3]{BHS3} for example) we find an open neighborhood $U$ of $\rho$ in $\mathcal{X}_{\overline{\rho}}$ and for all $w_{\underline{\mathcal{F}}}\preceq w$ open neighborhoods $V_{{\underline{\mathcal{F}},w}}\subset Y(U^p,\overline{\rho})$ of $x_{\underline{\mathcal{F}},w}$ such that the restriction of $f$ is a closed immersion $V_{{\underline{\mathcal{F}},w}}\hookrightarrow U$.
As the classical points are Zariski-dense in $V_{\underline{\mathcal{F}},w}$ by Proposition \ref{propertiesofeigenvar} (ii), we find that $\bigcup_w V_{{\underline{\mathcal{F}},w}}\subset U\cap \mathcal{X}_{\overline{\rho},S}^{\aut}$.

The formation of scheme-theoretic images commutes with flat base change, hence in particular with passing to the complete local ring at $\rho$. It follows (using the notation from subsection \ref{subsectionglobaltangentspace}) that 
\[\Spec S_{\underline{\mathcal{F}}}\subset \Spec \hat{\mathcal{O}}_{\mathcal{X}_{\overline{\rho},S}^{\aut},\rho} \subset \Spec \hat{\mathcal{O}}_{\mathcal{X}_{\overline{\rho},S},\rho}=\Spec S.\]
We deduce that \[t_{S_{\underline{\mathcal{F}}}}\subset t_{\hat{\mathcal{O}}_{\mathcal{X}_{\overline{\rho},S},\rho}}=T_\rho \mathcal{X}_{\overline{\rho}}^{\aut}.\] 
As this conclusion holds true for each choice of $\underline{\mathcal{F}}$, Corollary \ref{globaltangentspacesurjection} implies the claimed inclusion $T_\rho\mathcal{X}_{\overline{\rho}}=t_S\subset T_\rho \mathcal{X}_{\overline{\rho}}^{\aut}$.
\end{proof}

We can conclude the proof of Theorem \ref{globalmaintheorem}.

\begin{proof}[Proof of Theorem \ref{globalmaintheorem}]
Given a rigid analytic space $Z$ and a point $z\in Z$ we write $\dim_zZ$ for the dimension of $Z$ at the point $z$, i.e.~for the dimension of the local ring $\mathcal{O}_{Z,z}$ of $Z$ at $z$. 

Assume in the first place that the group $U^p$ is sufficiently small to satisfy \eqref{net}.
We then have a chain of inequalities
\[\dim_\rho \mathcal{X}_{\overline{\rho},S}^{\aut}=\dim_\rho C\leq \dim_\rho \mathcal{X}_{\overline{\rho},S}\leq \dim T_\rho \mathcal{X}_{\overline{\rho},S}=\dim T_\rho \mathcal{X}_{\overline{\rho},S}^{\aut}=\dim_\rho \mathcal{X}_{\overline{\rho},S}^{\aut},\]
as $\rho$ is (by assumption) a smooth point of $\mathcal{X}_{\overline{\rho},S}^{\aut}$. Here the equality $\dim T_\rho \mathcal{X}_{\overline{\rho},S}=\dim T_\rho \mathcal{X}_{\overline{\rho},S}^{\aut}$ is Proposition \ref{globalequalityoftangentspaces}.
It follows that equality holds and hence the (necessarily unique) irreducible component $C$ of $\mathcal{X}_{\overline{\rho},S}^{\aut}$ containing $\rho$ is an irreducible component of $\mathcal{X}_{\overline{\rho},S}$.

Now assume that $U^p$ does not necessary satisfy \eqref{net}. Then we can find a place $v_1\notin S$ of $F$ such that $v_1$ is split in $E$. Let $V_{v_1}\subset G(F_{v_1})$ sufficiently small so that the group $V^p\coloneqq V_{v_1}\times\prod_{v\nmid p, v\neq v_1}U_v$ satisfies \eqref{net} and let $S'=S\cup\{v_1\}$. We have a closed immersion $\mathcal{X}_{\overline{\rho},S}\subset \mathcal{X}_{\overline{\rho},S'}$ and it follows from local-global compatibility theorems that a point of $\mathcal{X}_{\overline{\rho},S}$  is $(G,V)$-automorphic if and only if it is $(G,U)$-automorphic. Let $x$ be a $(G,U)$-automorphic point and let $Z$ be an irreducible component of $\mathcal{X}_{\overline{\rho},S}$ containing $x$. Let $Z'$ be some irreducible component of $\mathcal{X}_{\overline{\rho},S'}$ containing $Z'$. As $x$ is a $(G,V)$-automorphic point, it is a smooth point of $Z'$ and $\dim Z'=[F:\Q]\tfrac{n(n+1)}{2}$ (see \cite[Thm.~C]{Allen1}). On the other hand, we have $\dim Z\geq [F:\Q]\tfrac{n(n+1)}{2}$ so that we have $Z=Z'$. We have proved that $(G,V)$-automorphic points are Zariski-dense in $Z'=Z$. As these points are also $(G,U)$-automorphic we can conclude that $(G,U)$-automorphic points are Zariski-dense in $Z$.
\end{proof}
\subsection{Proof of Theorem \ref{maintheo}}
We finally turn to the proof of the main theorem as stated in the introduction. Thie result follows from Theorem \ref{globalmaintheorem} using base change results for unitary groups.

\begin{proof}[Proof of Theorem \ref{maintheo}]
Let $G$ be a unitary group over $F^+$ which is an outer form of $\GL_{n,\overline{F}}$, which is quasisplit at every finite place and such that $G(F^+\otimes_{\Q}\R)$ is compact. the existence of such a unitary group follows for example from the results of \cite[\S2]{Clozelautoduales}. By assumption there exists some regular cohomological cuspidal automorphic representation $\pi$ such that $\overline{\rho}\otimes_{\mathbf{F}}\overline{\Fp}\simeq\overline{\rho}_\pi$. It follows from \cite[Thm.~5.4]{Labesse} that the representation $\pi$ is the weak base change of some automorphic, automatically cuspidal, representation $\sigma$ of $G$. So that $\overline{\rho}\otimes_{\mathbf{F}}\overline{\Fp}\simeq\overline{\rho}_\sigma$. Let $U^p$ be some compact open subgroup of $G(\mathbb{A}^{p,\infty})$ such that $\sigma^{U^p}\neq0$. The representation $\pi$ is unramified at finite places $v\notin S$. Consequently it follows from \cite[Thm.~5.9]{Labesse} that we can choose the group $U^p$ spherical at places not in $S$ and that $U_v$ is spherical for all $v|p$. Then the representation $\overline{\rho}$ is $(G,U)$-automorphic. Consequently we can apply Theorem \ref{globalmaintheorem} to conclude that the Zariski closure of the $(G,U)$-automorphic points in $\mathcal{X}_{\overline{\rho},S}$ is a union of irreducible components. However it follows from Cor.~5.3, Thm.~5.4 and Thm.~5.9 in \cite{Labesse} that a point of $\mathcal{X}_{\overline{\rho},S}$ is automorphic if and only if it is $(G,U^p)$-automorphic for some $U^p$ as above. This concludes the proof.
\end{proof}

\subsection{Remarks on the existence of enough automorphic points}
We end by discussing that the main theorem conjecturally should imply density of automorphic points in $\mathcal{X}_{\overline{\rho},S}$.
Let us write $\mathcal{X}_{\overline{\rho},S}=\bigcup C_i$ for the decomposition into irreducible components. Then, obviously Theorem \ref{globalmaintheorem} implies that the $(G,U)$-automorphic points are Zariski-dense in $\mathcal{X}_{\overline{\rho},S}$, if $C_i\backslash \bigcup_{j\neq i}C_j$ contains a $(G,U)$-automorphic point for each $i$.

A result like this is the main result of Allen \cite{Allen2}. As loc.~cit.~is formulated for Galois representations associated to automorphic representations of $\GL_n$ rather than for forms on a unitary group, we repeat the argument for the convenience of the reader. The argument however is taken from \cite{Allen2}.

Given a set of Hodge-Tate weights $\mathbf{k}_v$ we write $R_{\overline{\rho}_v}^{\mathbf{k}_v-\cris}$ for the quotient of $R_{\overline{\rho}_v}$ parametrizing crystalline deformations with Hodge-Tate weights $\mathbf{k}_v$. For $\mathbf{k}=(\mathbf{k}_v)_{v\in S_p}$ we write $R_{\overline{\rho}_p}^{\mathbf{k}-\cris}$ for the completed tensor product of the local rings $R_{\overline{\rho}_v}^{\mathbf{k}_v-\cris}$.
\begin{theo}
Assume that
\begin{enumerate}
\item[(a)] the representation $\overline{\rho}$ is adequate 
\item[(b)] the group $H^0(\mathcal{G}_{E_v}, \mathrm{ad}^0(\overline{\rho}))$ vanishes for each $v\in S_p$ 
\item[(c)] there exists a lift $\rho$ of $\overline{\rho}$ that is crystalline of Hodge-Tate weight $\mathbf{k}$ and for each irreducible component $\mathcal{X}^p$ of $\mathcal{X}_{\overline{\rho}^p}$ all irreducible components of $(\Spf R_{\overline{\rho}_p}^{\mathbf{k}-\cris})^{\rig}$ are $\mathcal{X}^p$-automorphic (compare \cite[Conjecture 3.25]{BHS1}). 
\end{enumerate} 
Then each irreducible component of $\mathcal{X}_{\overline{\rho}}$ contains a $(G,U)$-automorphic point in its interior.
\end{theo}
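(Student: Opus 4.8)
The argument is that of Allen \cite{Allen2}; its engine is Taylor--Wiles--Kisin patching combined with hypothesis (c). Under the adequacy hypothesis (a), the construction recalled in diagram \eqref{diagpatching} is accompanied by a patched module $M_\infty$ over $R_\infty$, finite free over $S_\infty$ (it is maximal Cohen--Macaulay over the regular ring $S_\infty$), whose specialization $M_\infty\otimes_{S_\infty}\mathcal{O}_L$ recovers the localized space of $p$-adic automorphic forms of tame level $U^p$; see \cite[\S3]{BHS1}. Consequently a point of $\mathcal{X}_{\overline{\rho},S}$, viewed as a point of $(\Spf R_{\overline{\rho},\mathcal{S}})^{\rig}$ after choosing framings at the places of $S$, is $(G,U)$-automorphic precisely when it lies in the support of $M_\infty\otimes_{S_\infty}\mathcal{O}_L$. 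The plan is to prove that this support fills up the locus of crystalline points of weight $\mathbf{k}$, and then to spread this over every irreducible component.

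First I would fix $\mathbf{k}$ as in (c) and pass to the crystalline-of-weight-$\mathbf{k}_v$ quotients at the places $v\mid p$, forming $R_\infty^{\mathbf{k}}=(R_{\overline{\rho}_p}^{\mathbf{k}-\cris}\widehat{\otimes}_{\mathcal{O}_L}R_{\overline{\rho}^p})\dbl y_1,\dots,y_g\dbr$ and the corresponding quotient $M_\infty^{\mathbf{k}}$ of $M_\infty$. By \cite{Kisindef} the local rings $R_{\overline{\rho}_v}^{\mathbf{k}_v-\cris}$ are $\mathcal{O}_L$-flat with smooth generic fibre, and hypothesis (b) pins down their dimension to the expected value, so that $R_\infty^{\mathbf{k}}$ is reduced, Cohen--Macaulay and equidimensional with $\dim R_\infty^{\mathbf{k}}=\dim S_\infty=q+1$. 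Since every irreducible component of $\mathcal{X}_{\overline{\rho}^p}$ is smooth and automorphic (\cite{Caraiani}, \cite{BLGGT}, \cite{Allen1}), hypothesis (c) says precisely that $M_\infty^{\mathbf{k}}$ is supported on every irreducible component of $(\Spf R_\infty^{\mathbf{k}})^{\rig}$; being Cohen--Macaulay of maximal dimension over the reduced equidimensional ring $R_\infty^{\mathbf{k}}$, it is then faithful over $R_\infty^{\mathbf{k}}$, i.e.\ it has full support. The augmentation ideal of $S_\infty$ is generated by $q=\dim R_\infty^{\mathbf{k}}-1$ elements $f_1,\dots,f_q$; as $M_\infty^{\mathbf{k}}$ is free over $S_\infty$ their images form a regular sequence on $M_\infty^{\mathbf{k}}$, hence (by full support) on $R_\infty^{\mathbf{k}}$ itself, so that $R_{\overline{\rho},\mathcal{S}}^{\mathbf{k}-\cris}=R_\infty^{\mathbf{k}}/(f_1,\dots,f_q)$ is Cohen--Macaulay of dimension $1$ and the automorphic module $M_\infty^{\mathbf{k}}/(f_1,\dots,f_q)$ has full support on it. In view of the previous paragraph this shows that every crystalline lift of $\overline{\rho}$ of weight $\mathbf{k}$ is $(G,U)$-automorphic; equivalently, the closed subspace $\mathcal{X}_{\overline{\rho},S}^{\mathbf{k}-\cris}\subset\mathcal{X}_{\overline{\rho},S}$ is contained in the automorphic locus.

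To conclude it is enough to exhibit, for each irreducible component $C$ of $\mathcal{X}_{\overline{\rho},S}$, a $(G,U)$-automorphic point of $C$ lying on no other component. Granting that $C$ contains a crystalline point $\rho'$ of weight $\mathbf{k}$, the previous step gives that $\rho'$ is automorphic; such a point is a smooth point of $\mathcal{X}_{\overline{\rho},S}$ by \cite{Allen1}, with tangent space of the expected dimension $[F:\Q]\tfrac{n(n+1)}{2}$, and as every irreducible component of $\mathcal{X}_{\overline{\rho},S}$ has dimension at least $[F:\Q]\tfrac{n(n+1)}{2}$ this forces $\dim_{\rho'}\mathcal{X}_{\overline{\rho},S}=\dim_{\rho'}C=[F:\Q]\tfrac{n(n+1)}{2}$ and $\rho'\notin C'$ for $C'\neq C$; thus $\rho'$ is the sought interior automorphic point, and $C\subset\mathcal{X}_{\overline{\rho},S}^{\aut}$, consistently with Theorem \ref{globalmaintheorem}. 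The whole problem is therefore reduced to the assertion that \emph{every irreducible component of $\mathcal{X}_{\overline{\rho},S}$ meets the closed subspace $\mathcal{X}_{\overline{\rho},S}^{\mathbf{k}-\cris}$}, i.e.\ that imposing crystallinity of weight $\mathbf{k}$ at $p$ leaves a point on every component; this is the main obstacle. Following \cite{Allen2} one attacks it through the components of $R_\infty$: via \eqref{diagpatching}, $R_{\overline{\rho},\mathcal{S}}$ is a power series ring over $R_{\overline{\rho}_p}\widehat{\otimes}_{\mathcal{O}_L}R_{\overline{\rho}^p}$ modulo a regular sequence, which reduces the claim to the statement that every irreducible component of the local deformation ring $(\Spf R_{\overline{\rho}_v})^{\rig}$ at a place $v\mid p$ carries a crystalline point of weight $\mathbf{k}_v$. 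I expect this local component-wise density statement, rather than the patching bookkeeping, to be where the essential work lies.
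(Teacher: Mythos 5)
Your first step (interpreting hypothesis (c) through the patched module and concluding that every weight-$\mathbf{k}$ crystalline point of $\mathcal{X}_{\overline{\rho},S}$ is $(G,U)$-automorphic) and your last step (such a point is smooth on $\mathcal{X}_{\overline{\rho},S}$ with tangent space of dimension $[F:\Q]\tfrac{n(n+1)}{2}$, hence lies on a unique component, which must then be automorphic) are both in line with what the paper does; the paper obtains the smoothness slightly differently, from the presentation of $\mathcal{X}_{\overline{\rho},S}$ inside the formally smooth space $\mathcal{X}_\infty$ by $q$ equations together with $\codim(t_\rho\mathcal{X}_{\overline{\rho}},t_\rho\mathcal{X}_\infty)\geq q$ (Lemma \ref{lemmaSelmergroup}(ii), i.e.\ \cite[Theorem A.1]{Allen1}), but that is a cosmetic difference.

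The genuine gap is exactly the step you flag as ``the main obstacle'' and then do not prove: that \emph{every} irreducible component of $\Spec R_{\overline{\rho},S}[1/p]$ meets the weight-$\mathbf{k}$ crystalline locus. Your proposed reduction of this to the assertion that every irreducible component of the local ring $(\Spf R_{\overline{\rho}_{\vtilde}})^{\rig}$, $v\in S_p$, carries a crystalline point of weight $\mathbf{k}_v$ is not a valid reduction, and it cannot carry the content of the theorem: under hypothesis (b) the local rings $R_{\overline{\rho}_{\vtilde}}$ at $p$ are formally smooth, hence irreducible, and hypothesis (c) already supplies a crystalline lift, so your ``local statement'' is automatically true while the global statement is certainly not automatic --- the difficulty is that a given irreducible component of the global deformation space could a priori avoid the crystalline locus even though the local space is irreducible. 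The paper resolves this by \cite[Lemma 1.1.2]{Allen2}: hypothesis (b) makes the local rings formally smooth, so via the presentation \eqref{diagpatching} the global ring is a quotient of a formally smooth ring by $q$ elements, and a dimension count against the crystalline quotients $R_{\overline{\rho}_{\vtilde}}^{\mathbf{k}_v-\cris}$ (nonempty by (c), of known dimension by \cite{Kisindef}) shows that the fiber product
\[ \big(\Spec \widehat{\bigotimes}_{v\in S_p}R_{\overline{\rho}_{\vtilde}}^{\mathbf{k}_v-\cris}\big)\times_{\Spec R_{\overline{\rho}_p}} \Spec R_{\overline{\rho},S}[1/p] \]
meets every irreducible component of $\Spec R_{\overline{\rho},S}[1/p]$. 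Without this (or an equivalent Gouv\^ea--Mazur/B\"ockle style codimension argument) your proof is incomplete, since all the remaining steps are conditional on producing a crystalline point of weight $\mathbf{k}$ on each component.
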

\begin{rema}
The assumption (c) is true for example, if $\overline{\rho}$ has a potentially diagonalizable lift.
\end{rema}
\begin{proof}
Note that the points of $\mathcal{X}_{\overline{\rho}}$ are in bijection to those of $\Spec R_{\overline{\rho}}[1/p]$ and the the irreducible components of $\mathcal{X}_{\overline{\rho}}$ are in bijection to those of $\Spec R_{\overline{\rho}}[1/p]$.

The assumption $H^0(\mathcal{G}_{E_v}, \mathrm{ad}(\overline{\rho}))=0$ implies that the local deformation rings $R_{\overline{\rho}_v}$ are formally smooth. \\
We consider the patching set-up as above. Then it follows from \cite[Lemma 1.1.2]{Allen2} that the fiber product 
\begin{equation}\label{globalcrystallinelocus}
\big(\Spec \widehat{\bigotimes}_{v\in S_p}R_{\overline{\rho}_v}^{\mathbf{k}_v-\cris}\big)\times_{R_{\overline{\rho}_p}} \Spec R_{\overline{\rho}}[1/p]
\end{equation}
meets each irreducible component of $\Spec R_{\overline{\rho}}[1/p]$.
By the automorphy lifting conjecture, assumption (c), all points in the intersection $(\ref{globalcrystallinelocus})$ are $(G,U)$-automorphic. 

It is left to show that such a point $\rho$ lies on a unique irreducible component. The subspace $\mathcal{X}_{\overline{\rho}}\subset \mathcal{X}_{\infty}$ is cut out by $q$ equations and $\mathcal{X}_\infty$ is formally smooth at $\rho$. It follows that each irreducible component $C_i$ of $\mathcal{X}_{\overline{\rho}}$ has dimension larger or equal to $\dim \mathcal{X}_\infty-q$.
On the other hand, as in the proof of Corollary \ref{directsumdecompo}, we deduce from Lemma \ref{lemmaSelmergroup} (ii) that
\[\codim (t_\rho \mathcal{X}_{\overline{\rho}},t_\rho\mathcal{X}_\infty)\geq q.\]
It follows that $\rho$ is a formally smooth point of $\mathcal{X}_{\overline{\rho}}$.
\end{proof}


\begin{thebibliography}{{Tho}12}

\bibitem[All16]{Allen1}
Patrick Allen, \emph{Deformations of polarized automorphic {G}alois
  representations and adjoint selmer groups}, Duke Math. J., 165(13):2407--2460, 2016.

\bibitem[All]{Allen2}
\bysame, \emph{On automorphic points in polarized deformation rings}, arXiv:1601.03752, to appear in Amer. J. Math.

\bibitem[BC09]{BelChe}
Jo{\"e}l Bella{\"{\i}}che and Ga{\"e}tan Chenevier, \emph{Families of {G}alois
  representations and {S}elmer groups}, Ast\'erisque (2009), no.~324.

\bibitem[Ber02]{Bergereqdiff}
Laurent Berger, \emph{Repr\'esentations {$p$}-adiques et \'equations
  diff\'erentielles}, Invent. Math. \textbf{148} (2002), no.~2, 219--284.

\bibitem[Ber08]{BergerBpaires}
\bysame, \emph{Construction de {$(\phi,\Gamma)$}-modules: repr\'esentations
  {$p$}-adiques et {$B$}-paires}, Algebra Number Theory \textbf{2} (2008),
  no.~1, 91--120.

\bibitem[B\"o01]{Bockledense}
Gebhard B\"{o}ckle, \emph{On the density of modular points in universal deformation spaces}, Amer. J. Math., 123(5):985--1007, 2001.

\bibitem[BHS]{BHS3}
Christophe Breuil, Eugen Hellmann, and Benjamin Schraen, \emph{A local model
  for the trianguline variety and applications}, arXiv:1702.02192.

\bibitem[BHS17a]{BHS2}
\bysame, \emph{Smoothness and classicality on eigenvarieties}, Invent. Math.
  \textbf{209} (2017), no.~1, 197--274.

\bibitem[BHS17b]{BHS1}
\bysame, \emph{Une interpr\'etation modulaire de la vari\'et\'e trianguline},
  Math. Ann. \textbf{367} (2017), no.~3-4, 1587--1645.

\bibitem[BLGGT14]{BLGGT}
T.~Barnet-Lamb, T.~Gee, D.~Geraghty, and R.~Taylor, \emph{{P}otential
  automorphy and change of weight}, Ann. of Math. (2), 179(2):501--609, 2014.

\bibitem[Bou68]{BourbakiLie456}
N.~Bourbaki, \emph{{\'E}l\'ements de math\'ematique. {G}roupes et alg\`ebres de
  {L}ie. {C}hapitre {IV} à {VI}}, Actualit\'es Scientifiques et Industrielles,
  No. 1337, Hermann, Paris, 1968.

\bibitem[BT65]{BorelTits}
Armand Borel and Jacques Tits, \emph{Groupes r\'eductifs}, Inst. Hautes
  \'Etudes Sci. Publ. Math. (1965), no.~27, 55--150.

\bibitem[Car12]{Caraiani}
Ana Caraiani, \emph{Local-global compatibility and the action of monodromy on
  nearby cycles}, Duke Math. J. \textbf{161} (2012), no.~12, 2311--2413.

\bibitem[CH13]{CheHaII}
Ga\"etan {Chenevier} and Michael {Harris}, \emph{{Construction of automorphic
  Galois representations. II.}}, {Camb. J. Math.} \textbf{1} (2013), no.~1,
  53--73.

\bibitem[{Che}05]{CheJL}
Ga\"etan {Chenevier}, \emph{{Une correspondance de Jacquet-Langlands
  $p$-adique.}}, {Duke Math. J.} \textbf{126} (2005), no.~1, 161--194.

\bibitem[Che11]{Chefougere}
Ga{\"e}tan Chenevier, \emph{On the infinite fern of {G}alois representations of
  unitary type}, Ann. Sci. \'Ec. Norm. Sup\'er. (4) \textbf{44} (2011), no.~6,
  963--1019.

\bibitem[Clo91]{Clozelautoduales}
Laurent Clozel, \emph{Repr\'esentations galoisiennes associ\'ees aux
  repr\'esentations automorphes autoduales de {$\mathrm{GL}(n)$}}, Inst. Hautes
  \'Etudes Sci. Publ. Math. (1991), no.~73, 97--145.

\bibitem[Col08]{ColmezDensite}
Pierre Colmez, \emph{Repr\'esentations triangulines de dimension 2},
  Ast\'erisque (2008), no.~319, 213--258, Repr{\'e}sentations $p$-adiques de
  groupes $p$-adiques. I. Repr{\'e}sentations galoisiennes et
  $(\phi,\Gamma)$-modules.

\bibitem[GM98]{GouveaMazur}
Fernando~Q. Gouv\^ea and Barry Mazur, \emph{On the density of modular
  representations}, Computational perspectives on number theory, AMS/IP Studem
  . Adv. Math., vol.~7, 1998, pp.~127--142.

\bibitem[Gro64]{EGAIV1}
A.~Grothendieck, \emph{\'{E}l\'ements de g\'eom\'etrie alg\'ebrique. {IV}.
  \'{E}tude locale des sch\'emas et des morphismes de sch\'emas. {I}}, Inst.
  Hautes \'Etudes Sci. Publ. Math. (1964), no.~20, 259.

\bibitem[Gro65]{EGAIV2}
\bysame, \emph{\'{E}l\'ements de g\'eom\'etrie alg\'ebrique. {IV}. \'{E}tude
  locale des sch\'emas et des morphismes de sch\'emas. {II}}, Inst. Hautes
  \'Etudes Sci. Publ. Math. (1965), no.~24, 231.

\bibitem[Gui]{Guiraud}
David-A. Guiraud, \emph{Unobstructedness of Galois deformation rings associated to RACSDC automorphic representations}, arXiv:1803.01408.

\bibitem[HS16]{HellmSchrdensity}
Eugen Hellmann and Benjamin Schraen, \emph{Density of potentially crystalline
  representations of fixed weight}, Compos. Math. \textbf{152} (2016), no.~8,
  1609--1647.

\bibitem[Hub96]{Huber}
Roland Huber, \emph{\'etale cohomology of rigid analytic varieties and adic
  spaces}, Aspects of Mathematics, E30, Friedr. Vieweg \& Sohn, Braunschweig,
  1996.

\bibitem[Jan87]{Jantzen}
Jens~Carsten Jantzen, \emph{Representations of algebraic groups}, Pure and
  Applied Mathematics, vol. 131, Academic Press Inc., 1987.

\bibitem[Kis08]{Kisindef}
Mark Kisin, \emph{Potentially semi-stable deformation rings}, J. Amer. Math.
  Soc. \textbf{21} (2008), no.~2, 513--546.

\bibitem[KPX14]{KPX}
Kiran~S. Kedlaya, Jonathan Pottharst, and Liang Xiao, \emph{Cohomology of
  arithmetic families of {$(\varphi,\Gamma)$}-modules}, J. Amer. Math. Soc.
  \textbf{27} (2014), no.~4, 1043--1115.

\bibitem[Lab11]{Labesse}
J.-P. Labesse, \emph{Changement de base {CM} et s\'eries discr\`etes}, On the
  stabilization of the trace formula, Stab. Trace Formula Shimura Var. Arith.
  Appl., vol.~1, 2011, pp.~429--470.

\bibitem[Liu07]{LiuCohomology}
Ruochuan Liu, \emph{Cohomology and duality for {$(\varphi,\Gamma)$}-modules
  over the {R}obba ring}, Int. Math. Res. Not. \textbf{2007} (2007), no.~1.

\bibitem[Maz89]{Mazurdeform}
B.~Mazur, \emph{Deforming {G}alois representations}, Galois groups over
  {$\mathbf{Q}$} ({B}erkeley, {CA}, 1987), Math. Sci. Res. Inst. Publ.,
  vol.~16, Springer, New York, 1989, pp.~385--437.

\bibitem[Maz97]{Mazur}
Barry Mazur, \emph{An "infinite fern" in the universal deformation space of
  {G}alois representations}, Collecta Math. \textbf{48} (1997).

\bibitem[Nak13]{NakamuraGL2}
Kentaro Nakamura, \emph{Deformations of {T}rianguline {$B$}-{P}airs and
  {Z}ariski {D}ensity of {T}wo {D}imensional {C}rystalline {R}epresentations},
  J. Math. Sci. Univ. Tokyo \textbf{20} (2013), 461--568.

\bibitem[{Tho}12]{ThorneAut}
Jack {Thorne}, \emph{{O}n the automorphy of $l$-adic {G}alois representations
  with small residual image}, {J. Inst. Math. Jussieu} \textbf{11} (2012),
  no.~4, 855--920.

\end{thebibliography}
\end{document}